\definecolor{Red}{cmyk}{0,1,1,0}
\definecolor{verde}{cmyk}{1,0,1,0}
\definecolor{loka}{cmyk}{.5,0,1,.5}
\definecolor{azul}{cmyk}{1,1,0,0}
\numberwithin{equation}{section}
\def\cal{\mathcal}
\newcommand{\Lbrace}{\left \lbrace}
\newcommand{\Rbrace}{\right \rbrace}
\newcommand{\diam}{\mathrm{diam}}
\def\Ed{{\mathbb{E}}}
\def\Pd{{\mathbb{P}}}
\newcommand{\dist}{\mathrm{dist}}
\newcommand{\N}{\mathbb{N}}
\newcommand{\R}{\mathbb{R}}
\renewcommand{\P}{\mathbb{P}}
\newcommand{\e}{\varepsilon}
\newcommand{\be}{\begin{equation}}
\newcommand{\ee}{\end{equation}}
\newcommand{\phitil}{\psi}
\newtheorem*{theorem*}{Theorem}
\newtheorem{theorem}{Theorem}
\newtheorem{proposition}[equation]{Proposition}
\newtheorem{definition}{Definition}
\newtheorem{lemma}{Lemma}[section]
\newtheorem{corollary}[equation]{Corollary}
\title{Clustering and Cliques in P.A random graphs with edge insertion}
\author{Caio Alves$^1$ }
\address{$^1$  Oak Ridge National Lab, Tennessee, USA
\newline
e-mail: {\itshape \texttt{demagalhaesc@ornl.gov}}}
\author{Rodrigo Ribeiro$^{2}$}
\address{$^2$ University of Denver, Colorado, USA
\newline
e-mail: {\itshape \texttt{rodrigo.ribeiro@du.edu}}}
\author{R{\'e}my Sanchis$^3$}
\address{$^3$Departamento de Matem{\'a}tica, Universidade Federal de Minas Gerais, Av. Ant\^onio
Carlos 6627 C.P. 702 CEP 30123-970 Belo Horizonte-MG, Brazil
\newline
e-mail: {\itshape \texttt{rsanchis@mat.ufmg.br}}}
\date{\today \\
    $^1$ Oak Ridge National Lab, USA\\
   	$^2$ Mathematics Departament, University of Denver, USA. \\
    $^3$ Departamento de Matem{\'a}tica, Universidade Federal de Minas Gerais, Brazil.
}
\begin{document}

\begin{abstract} In this paper, we investigate the global clustering coefficient (a.k.a transitivity) and clique number of graphs generated by a preferential attachment random graph model with an additional feature of allowing edge connections between existing vertices. Specifically, at each time step $t$, either a new vertex is added with probability $f(t)$, or an edge is added between two existing vertices with probability $1-f(t)$. We establish concentration inequalities for the global clustering and clique number of the resulting graphs under the assumption that $f(t)$ is a regularly varying function at infinity with index of regular variation~$-\gamma$, where $\gamma \in [0,1)$. We also demonstrate an inverse relation between these two statistics: the clique number is essentially the reciprocal of the global clustering coefficient. 

\vskip.5cm
\noindent
\emph{Keywords}: complex networks; cliques; preferential attachment; concentration bounds; diameter; scale-free; small-world
\newline 
MSC 2010 subject classifications. Primary 05C82; Secondary  60K40, 68R10
\end{abstract}
\maketitle


\section{Introduction}
Analyzing graph observables is crucial to understanding the topology and evolution of networks. These observable properties, including but not limited to node degree distribution, clustering coefficient, and clique number, provide invaluable insights into the network's overall structure and connectivity patterns.
The global clustering coefficient captures the density of triangles in the network and can reveal how nodes tend to form tightly-knit clusters. Whereas, the clique number, which is the size of the largest complete subgraph, can be thought of as a measure of how tightly connected the vertices in the graph are. 

The investigation and usage of the clustering coefficient and clique number are an interdisciplinary field that have appeared in a wide range of scientific papers. Articles from various fields have analyzed these properties of networks due to the fact that they provide insights into the underlying mechanisms governing network formation and evolution. In more theoretical fields, the study of these properties has led to the development of new graph theory concepts, see \cite{F07,K08} for a few examples of recent generalizations of clustering coefficients. In the field of chemistry, in \cite{R03} the authors employ cliques to describe chemicals within a chemical database that display a notable level of similarity to a target structure. In sociology, the study of these properties has helped to understand the formation of social networks and the spread of information within them, see \cite{F49,LP45}. Therefore, investigating the clustering coefficient and clique number is a crucial step towards developing a comprehensive understanding of network dynamics, and the interdisciplinary nature of these quantities underscores their significance.



In the classical preferential attachment model where new arriving vertices connect to $m$ neighbors \cite{BA99}, the clique number is at most $m$, and the clustering coefficient tends to $0$ as the inverse of the number of vertices in the graph \cite{bollrior}. In this paper, we focus on studying these two crucial statistics in graphs generated by a variation of this model, where new edges between existing vertices can be added at any time. We investigate the impact of this modified rule on these statistics. 

To facilitate our analysis and present our results clearly, we will first introduce a formal definition of the model in the following subsection.


\subsection{The model}
Firstly, let us introduce a new notation. Given a finite connected (multi)graph~$G$, let $\pi_G$ be the following distribution over the vertices of $G$ defined as follows
\begin{equation}\label{def:pi}
    \pi_G(v) := \frac{{\rm degree}(u)}{\sum_{w \in G} {\rm degree}(w)}.
\end{equation}
Our model generates a sequence of (multi)graphs in an inductive way, that is, we obtain the $t$-th graph by performing a graph operation on the $(t-1)$th graph. In our settings, there exist two possible ways of modifying a given graph $G$:
\begin{itemize}
    \item {\bf Vertex-operation}. Add a {\it new} vertex $v$ to $G$ and connect it to a vertex $u \in G$ selected accordingly to $\pi_G$;
    \item {\bf Edge-operation}. Add a new edge connecting two existing vertices of $G$, $u$ and $v$, both selected independently and accordingly to $\pi_G$. 
\end{itemize}
We highlight that in edge-operation, $u$ and $v$ might be the same vertex, in that case a loop is added to $v$, or they might be already connected in $G$, in which case a multiple edge is created between then.

The parameter of the model is a real function $f:(0,\infty) \to [0,1]$. For a fixed function $f$, we let $\{Z_t\}_{t \in \mathbb{N}}$ be an independent sequence of random variables such that $Z_t \sim {\rm Ber}(f(t))$. We start with an initial graph $G_0$ and then we generate the model inductively by obtaining~$G_{t+1}$ from $G_t$ as follows: 
\begin{itemize}
    \item {\bf Vertex-step.} If $Z_{t+1} = 1$, we perform a {\bf vertex-operation} on $G_t$ and set the resulting graph to be $G_{t+1}$. 
    \item {\bf Edge-step.} If $Z_{t+1} = 0$, we perform an {\bf edge-operation} instead.
\end{itemize}
In words, the model decides according to a coin, with a possibly time-dependent bias, which operation it is going to perform on the current graph in order to generate the next one.

The model here investigated was originally introduced in \cite{ARSEdge17} and, by choosing $f(t)\equiv 1$, one obtains the well-known Preferential Attachment (PA) random graph proposed by A-L. Barab\'asi and R. Albert in \cite{BA99} with parameter $m=1$. The case $f(t) \equiv p \in [0,1]$ is a particular case of the model introduced by C. Cooper and A. Frieze in \cite{CF03} and also investigated by F. Chung and L. Lu in \cite{CLBook}.

For the sake of readability and to avoid clutter we will make some choices notation-wise. Throughout the paper, we will use letters $f,g$ and $h$ to denote functions for which $G_t(f)$ makes sense, that is, $f,g,h: (0,\infty) \to [0,1]$. We will also simply write $G_t$ for the $t$-th graph generated by the above dynamics, whenever the function~$f$ is fixed and there is no risk of confusion. Alternatively, we will write $G_t(f)$ to highlight the dependence on $f$ when needed.


We will also set $G_0$ to be the multigraph on one vertex and a self-loop attached to it. The reader will notice  that the initial graph plays no important role in our results as long as it is finite.
And we will write $|G|$ as a shorthand for the cardinality of the vertex set of $G$. In this paper we investigate the aforementioned model when $f$ belongs to the family of regularly varying functions. We say a function $f:(0,\infty) \to (0,\infty)$ is a {\it regularly varying function at infinity} if there exists a real number $\gamma$ such that for all $a>0$ the following identity is satisfied
\begin{equation}\label{def:rv}
    \lim_{t\to \infty}\frac{f(at)}{f(t)} = a^\gamma.
\end{equation}
The constant $\gamma$ is called the {\it index of regular variation}. Sometimes, when $\gamma=0$, we may call the function $f$ {\it slowly varying}.


\subsection{Main results: Statements and Discussion} 
In this part we will state and discuss our main results. The results concern concentration inequalities for the clique number and global clustering coefficients. Our results are particularly interesting as they provide a characterization of such statistics in terms of the index of regular variation when $f$ is taken as a regularly varying function. We also prove a continuity theorem, meaning that under a suitable notion of convergence for the function $f$, we have convergence in total variation distance of the respective processes.



\subsubsection{Cliques} A complete subgraph is a subgraph in which every pair of vertices is adjacent to each other. The clique number of a graph is the size of the largest complete subgraph (clique) that can be found in the graph. Again for the sake of readability, we will denote the clique number of $G_t(f)$ simply by $\omega_t$ whenever $f$ is fixed or understood from the context.

The most straightforward problem about clique number concerns finding its order of magnitude. In \cite{BCH20}, K. Bogerd, R. M. Castro and R. van der Hofstad, prove a concentration result for the clique number in rank-1 random graphs. Another interesting problem is the one of detecting artificially planted cliques, that is, by observing realizations of the model, decides whether it contains a clique that has been artificially added to the graph. This type of problem is investigated by K. Bogerd, R. M. Castro, R. van der Hofstad and N. Verzelen in \cite{BCHV21}.

In our context, we establish two results involving the clique number of $G_t(f)$. Firstly, we have proved a general lower bound for the clique number of $G_t$ that holds under the mild assumption that~$f$ decreases to zero.
\begin{theorem}[Existence of Large cliques]\label{t:largecliques}
	If $f$ decreases to zero, then for every~$\delta\in(0,1)$, there exist positive constants $c_1$ and $c_2$ depending on $\delta$ and $f$ only such that
\[
\Pd\left(  \omega_t \ge \frac{c_2\Ed\left[|G_{t^{(1-\delta)/2}}|\right]}{\log t}\right) \ge  1-\frac{2}{\log t} - \exp \Lbrace - \frac{c_2\Ed\left[|G_{t^{(1-\delta)/2}\log t}|\right]}{4\log t}\Rbrace,
\]
\end{theorem}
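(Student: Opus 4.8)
The plan is to exhibit a clique whose vertices are among the first $t^{(1-\delta)/2}$ vertices, using the fact that the edge-operations that occur between times $t^{(1-\delta)/2}\log t$ and $t$ have a reasonable chance of landing inside that early vertex set, and that each such landing tends to connect two high-degree early vertices. First I would set $s := t^{(1-\delta)/2}$ and condition on the graph $G_{s}$; by the general theory of this model (and the assumption $f\downarrow 0$), $|G_s|$ concentrates around $\Ed[|G_s|]$, so with probability at least $1 - (\log t)^{-1}$ we may assume $|G_s| \ge c\,\Ed[|G_s|]$ for a suitable constant. The key observation is that during the time window $(s, s\log t]$ — which is where the second exponential term comes from — there are order $\Ed[|G_{s\log t}|] - \Ed[|G_s|]$ new vertices, but more importantly the total degree grows only polynomially, so at each edge-step the probability that both endpoints are chosen among the vertices present at time $s$ is bounded below by a quantity like $(\text{degree mass of } G_s / \text{total degree})^2$.

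The second step is the combinatorial heart: I would show that if we look only at edges added by edge-operations whose \emph{both} endpoints lie in $V(G_s)$, then after enough such edges accumulate, a large clique is forced. The cleanest route is a greedy / pigeonhole argument: partition the budget of "internal" edge-operations and argue that since they are each distributed (approximately) as two independent $\pi$-samples biased toward the same small set of persistent high-degree vertices, a $\log t$ fraction of them concentrate on a common pool of $\Theta(\Ed[|G_s|]/\log t)$ vertices, and within that pool the edges are dense enough that a clique of the claimed size appears. Alternatively, one can use the fact — presumably established earlier for this model — that the first vertex (or first few vertices) accrues a macroscopic fraction of the degree, so edge-operations repeatedly attach to a bounded set of hubs, and then a clique is built on hubs together with their common neighbours. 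Either way the target size $c_2 \Ed[|G_s|]/\log t$ is exactly the number of early vertices that survive the degree-concentration and capture an edge.

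For the third step I would make the probabilistic bookkeeping precise: the number $N$ of edge-operations in $(s\log t, t]$ with both endpoints in $V(G_s)$ stochastically dominates a sum of independent Bernoulli variables with success probabilities bounded below by a constant times $(\Ed[|G_s|]/t^{(1+\delta)/2})^{2}$ times the number of edge-steps, and a Chernoff bound gives $\Pd(N < \tfrac12 \Ed N) \le \exp\{-c\,\Ed N\}$; tracking constants, $\Ed N$ is comparable to $\Ed[|G_{s\log t}|]/\log t$, which produces the $\exp\{-c_2 \Ed[|G_{s\log t}|]/(4\log t)\}$ term. Combining the failure probability of the degree-concentration step ($2/\log t$, accounting for both $|G_s|$ and the total-degree control) with this Chernoff term yields the stated bound. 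The main obstacle I anticipate is the combinatorial step: turning "many edge-operations land among a small persistent set of early vertices" into "a clique of size $\Theta(\Ed[|G_s|]/\log t)$ exists" is not automatic, because the edges could in principle spread out; the resolution should come from the heavy concentration of the degree distribution on a few early vertices in this preferential-attachment dynamics, which forces the internal edges to pile up on a small set and hence to form a dense — indeed complete — subgraph once the count exceeds the relevant binomial threshold $\binom{k}{2}$ with $k \asymp \Ed[|G_s|]/\log t$.
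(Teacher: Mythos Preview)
Your approach is fundamentally different from the paper's, and the step you yourself flag as ``the main obstacle'' is in fact a genuine gap.

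The paper does \emph{not} argue directly that many edge-operations land among the early vertices and then extract a clique combinatorially. Instead, it builds a grand coupling (the doubly-labeled random tree $\mathcal{T}_t$) under which $G_t(f)$ and $G_t(p)$, for any constant $p\in(0,1)$, are obtained from the same randomness: a vertex $v_j$ survives in $G_t(h)$ iff its uniform label $U_j\le h(j)$, and since $f\le p$ eventually, every vertex of $G_t(f)$ is a vertex of $G_t(p)$, while every edge between surviving vertices persists. The paper then \emph{imports} the clique from Theorem~1 of~\cite{ARS17} for the constant-$p$ model: that result produces a clique $\mathcal{K}\subset G_t(p)$ of size $\sim t^{\alpha}$ whose vertices are the ``leaders'' of consecutive blocks of $m$ vertices; one controls the actual times $t_j$ at which these leaders were born via a geometric bound (the ``good set'' condition), and then sets
\[
N:=\sum_{v_j\in\mathcal{K}}\mathbb{1}\{U_j\le f(j)\}.
\]
Conditionally on $\{\mathcal{K}=S,\ S\text{ good}\}$, the indicators are independent with $\Pd(U_{t_j}\le f(t_j)\mid \mathcal{K}=S)=p^{-1}f(t_j)$; the mean of $N$ is therefore $\gtrsim F(c_1 t^\alpha\log t)/\log t$, and a Chernoff bound gives the exponential term. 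The surviving vertices automatically form a clique in $G_t(f)$ because collapsing additional vertices never destroys adjacencies between survivors. The $\Ed[|G_{t^{(1-\delta)/2}}|]$ in the statement is exactly $F(t^{(1-\delta)/2})$, and the $\log t$ in the denominator comes from the good-set time control, not from any window $(s,s\log t]$ as in your sketch.

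Your direct route stalls precisely where you say it does. Having order $\binom{k}{2}$ edge-operations land inside $V(G_s)$, with $k\asymp \Ed[|G_s|]/\log t$, does \emph{not} force a $k$-clique: the endpoints are $\pi$-biased toward a few hubs, so the resulting graph on $V(G_s)$ is typically star-like or concentrated on a handful of high-degree vertices, which is exactly the wrong structure for a large clique. Your ``alternatively'' clause (hubs plus common neighbours) produces stars, not complete graphs. The paper sidesteps this entirely by pulling the clique structure from the already-proved constant-$p$ case and paying only an $f$-dependent thinning cost. Moreover, your probabilistic bookkeeping does not match the statement: the quantity $\Ed[|G_{s\log t}|]/\log t$ in the exponent arises in the paper from summing $f$ over the birth-times of the imported clique vertices, not from counting edge-steps whose endpoints fall in $V(G_s)$; your Bernoulli success probability $(\Ed[|G_s|]/t^{(1+\delta)/2})^2$ and the resulting $\Ed N$ do not reduce to that expression.
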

The above theorem yields an interesting result: for certain regimes of the model, it states that~$G_t(f)$ contains a clique of order approximately $\sqrt{|G_t(f)|}$. For example, consider the case where $f(t) = t^{-\gamma}$ with $\gamma \in (0,1)$. This gives us~$\Ed\left[|G_t(f)|\right] \approx t^{1-\gamma}$. Thus, by Theorem \ref{t:largecliques} we obtain, with high probability, a clique of order at least~$t^{(1-\gamma)(1-\delta)/2}$ which is approximately   $\left( \Ed\left[|G_t(f)|\right]\right)^{(1-\delta)/2}$.

As our second main result, we have also provided a comprehensive characterization of this statistic along with the global clustering coefficient under the additional assumption of $f$ being regularly varying. In other words, we establish that for the vast and important class of regularly varying functions at infinity the lower bound given by Theorem \ref{t:largecliques} is sharp up to factors of order $t^{\delta}$ for any $\delta$.  This is our next result.
\begin{theorem}[Clique number for RV functions]\label{thm:clique}Assume that $f$ decreases to zero and it is a regularly varying function with index of regular variation $-\gamma$,  for $\gamma \in [0,1)$. Then, for every~$\delta \in (0, 1)$, there exist positive constants $c_1,c_2$ and $c_3$ depending on $\delta$ only such that
    $$
    \Pd\left( c_1t^{(1-\gamma)(1-\delta)/2} \le \omega_t \le c_2t^{(1-\gamma)(1+\delta)/2}\right) \ge 1 -\frac{c_3}{\log t}.
    $$
\end{theorem}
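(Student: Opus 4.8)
The plan is to treat the two inequalities separately: the lower bound is an almost immediate consequence of Theorem~\ref{t:largecliques}, while the upper bound is the real content.

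\textbf{Lower bound.} Since $|G_s|=|G_0|+\sum_{i\le s}Z_i$ with the $Z_i\sim\mathrm{Ber}(f(i))$ independent, we have $\Ed[|G_s|]=|G_0|+\sum_{i\le s}f(i)$, and Karamata's theorem for regularly varying functions (applicable because $\gamma\in[0,1)$, so the series diverges) gives $\sum_{i\le s}f(i)\sim \frac{sf(s)}{1-\gamma}$, hence $\Ed[|G_s|]\asymp s^{1-\gamma}L(s)$ for a slowly varying $L$. Feeding $s=t^{(1-\delta')/2}$ into Theorem~\ref{t:largecliques} and using that a slowly varying function is eventually squeezed between $t^{\pm\varepsilon}$ for every $\varepsilon>0$, the quantity $\Ed[|G_{t^{(1-\delta')/2}}|]/\log t$ exceeds $t^{(1-\gamma)(1-\delta)/2}$ once $\delta'$ and $\varepsilon$ are chosen small relative to the target $\delta$; the exponential error term in Theorem~\ref{t:largecliques} is then $o(1/\log t)$ because its exponent grows like a positive power of $t$. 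This yields $\Pd\!\left(\omega_t\ge c_1t^{(1-\gamma)(1-\delta)/2}\right)\ge 1-c/\log t$.

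\textbf{Upper bound.} Set $k=k(t):=\lceil c_2 t^{(1-\gamma)(1+\delta)/2}\rceil$ and bound $\Pd(\omega_t\ge k)$ by a union bound over $k$-subsets of the vertex set. First condition on high-probability events: a uniform upper bound on individual degrees of the shape $\deg_s(v)\lesssim s/\tau_v$ over all $s\le t$ and all vertices $v$ (with $\tau_v$ the birth time of $v$), up to slowly varying corrections, together with $|G_t|\le M_t$ for $M_t\asymp t^{1-\gamma}L(t)$ and the matching birth-time lower bound $\tau_v\gtrsim(\mathrm{rank}(v))^{1/(1-\gamma)}$ (the last two from Chernoff for the independent sum $\sum Z_i$ plus Karamata, exactly as above). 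The combinatorial point is that a $k$-clique $S$ always contains at least $\lceil k/2\rceil$ vertices of birth-rank exceeding $k/2$; restricting to such a subset $S'$ (still a clique), every vertex of $S'$ has birth time at least $\tau_{\min}\gtrsim (k/2)^{1/(1-\gamma)}$. Let $\widetilde E_{S'}(t)$ be the number of pairs inside $S'$ joined by at least one edge of $G_t$: it equals $\binom{|S'|}{2}$ exactly when $S'$ is a clique, it increases by at most one per step, and on the good event the conditional probability that a given step increments it is at most $\bar q:=\bigl(|S'|\,C/(2\tau_{\min})\bigr)^2\asymp (k/2)^{-2\gamma/(1-\gamma)}$, since such a step must be an edge-step landing on two vertices of $S'$. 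A factorial-moment estimate then gives $\Pd(S'\text{ is a clique})\le\binom{t}{m}\bar q^{\,m}\le (et\bar q/m)^m$ with $m=\binom{|S'|}{2}\asymp k^2$, and because $t\bar q/m\asymp t^{-\delta}$ once $c_2$ is large, this is at most $t^{-\delta m/2}=t^{-\Theta(\delta k^2)}$. Multiplying by the number $\binom{M_t}{k}\le t^{k}$ of candidate sets $S$ and using that $k\to\infty$ polynomially, the total is $t^{k(1-\Theta(\delta k))}$, which tends to $0$ faster than any power of $\log t$; combined with the lower bound this proves the claim.

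\textbf{Main obstacle.} The two genuinely delicate ingredients are the uniform degree upper bound used above — simultaneous control of every vertex's degree growth, which calls for a martingale/Doob argument and is the point where honest slowly varying factors appear when $\gamma=0$ — and the bookkeeping of all slowly varying and $\log t$ factors so that they are absorbed into the $t^{\pm\delta}$ slack present in the statement. Once those are in hand, the rest is a routine first-moment computation.
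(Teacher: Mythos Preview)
Your lower bound matches the paper's exactly: both feed Theorem~\ref{t:largecliques} with the Karamata estimate $F(s)\asymp s^{1-\gamma}\ell(s)$ and absorb the slowly varying factor into the $t^{\delta}$ slack. For the upper bound the paper takes a genuinely different route. Rather than a union bound over $k$-subsets, it first proves $\Ed[\mathcal{E}(\widehat{G}_t)]\le Ct^{1-\gamma+\delta}$ (Theorem~\ref{thm:edgesghat}, obtained by bounding $\Ed[e^{ij}_t]$ for each ordered pair via Lemma~\ref{lemma:expeijt} and summing), applies Markov, and then invokes the deterministic inequality ``a graph with $e$ edges has at most $e^{3/2}$ triangles'' to get $\Delta(G_t)\le Ct^{3(1-\gamma)(1+\delta)/2}$ with high probability (Theorem~\ref{thm:triangles}); since a $k$-clique forces $\binom{k}{3}$ triangles, the clique upper bound follows. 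Your first-moment argument is correct as sketched --- the two issues you flag (making the good-event conditioning rigorous via a stopping time so that the step-by-step bound $\bar q$ is available, and tracking the slowly varying corrections especially at $\gamma=0$ where $\phi(s)/s$ genuinely decays) are the only real work, and both are routine once the uniform degree bound is in hand. The trade-off is that the paper's detour through the edge and triangle counts simultaneously supplies exactly the estimates needed for the clustering coefficient (Theorem~\ref{thm:clustering}), so it proves two theorems at once; your approach is more direct for the clique number alone but would need a separate argument for $\Delta(G_t)$. Both ultimately rest on the same degree upper bound (Lemma~\ref{lemma:upperbounddeg2}) and birth-time concentration (Lemma~\ref{lemma:tauibound}).
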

From a computational perspective, the problem of finding cliques in a graph are not only too time-consuming, but some are NP-complete. In our case, as a consequence of our approach, the clique given by Theorem \ref{thm:clique} is likely to be formed by the $t^{(1-\gamma)(1+\delta)/2}$ first vertices added by the process. 

By Theorem \ref{thm:clique} and properties of the random sequence $\{\omega_t\}_{t \ge 0}$ we prove a convergence result
\begin{corollary}[Convergence of clique number]\label{cor:cliquesRV} Assume that $f$ decreases to zero and it is a regularly varying function with index of regular variation $-\gamma$,  for $\gamma \in [0,1)$. Then,
		\[
		\lim_{t \to \infty }\frac{\log \omega_t}{\log t } = \frac{1-\gamma}{2}, \; \text{almost surely.}
		\]

\end{corollary}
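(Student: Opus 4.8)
The plan is to deduce Corollary \ref{cor:cliquesRV} from the two-sided bound in Theorem \ref{thm:clique} by a Borel--Cantelli argument along a suitable subsequence, followed by an interpolation step that controls $\omega_t$ between consecutive points of the subsequence using monotonicity of the clique number. First I would fix $\delta \in (0,1)$ and, for each integer $k$, set $t_k := \lfloor e^k \rfloor$ (or more conveniently work with $t = e^k$ directly and round at the end). By Theorem \ref{thm:clique} applied at $t_k$, the event
\[
A_k := \left\{ c_1 t_k^{(1-\gamma)(1-\delta)/2} \le \omega_{t_k} \le c_2 t_k^{(1-\gamma)(1+\delta)/2} \right\}
\]
has probability at least $1 - c_3/\log t_k = 1 - c_3/k + o(1/k)$. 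This is not summable, so a naive Borel--Cantelli fails along this subsequence; the fix is to take a sparser subsequence, e.g. $t_k := \lfloor e^{k^2} \rfloor$, so that $\P(A_k^c) \le c_3/k^2$ is summable and hence, almost surely, $A_k$ holds for all large $k$. On that almost sure event, dividing through by $\log t_k$ and letting $k \to \infty$ gives
\[
\frac{(1-\gamma)(1-\delta)}{2} \le \liminf_{k} \frac{\log \omega_{t_k}}{\log t_k} \le \limsup_{k} \frac{\log \omega_{t_k}}{\log t_k} \le \frac{(1-\gamma)(1+\delta)}{2}.
\]

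Next I would remove the restriction to the subsequence. The key observation is that $t \mapsto \omega_t$ is nondecreasing: every graph operation only adds vertices and edges, so no clique is ever destroyed. Hence for $t_k \le t \le t_{k+1}$ we have $\omega_{t_k} \le \omega_t \le \omega_{t_{k+1}}$, and therefore
\[
\frac{\log \omega_{t_k}}{\log t_{k+1}} \le \frac{\log \omega_t}{\log t} \le \frac{\log \omega_{t_{k+1}}}{\log t_k}.
\]
Since $t_{k+1}/t_k \to 1$ under $t_k = \lfloor e^{k^2}\rfloor$ would be false --- here $\log t_{k+1}/\log t_k = (k+1)^2/k^2 \to 1$, which is exactly what we need, so the ratio $\log t_{k+1} / \log t_k \to 1$ and the squeeze carries the subsequential bounds to the full limit. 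Combining with the previous display yields, almost surely,
\[
\frac{(1-\gamma)(1-\delta)}{2} \le \liminf_{t\to\infty} \frac{\log \omega_t}{\log t} \le \limsup_{t\to\infty} \frac{\log \omega_t}{\log t} \le \frac{(1-\gamma)(1+\delta)}{2}.
\]
Finally, since $\delta \in (0,1)$ was arbitrary, intersecting over a countable sequence $\delta = 1/n \to 0$ (each holding on an almost sure event, hence all holding simultaneously almost surely) forces $\lim_{t\to\infty} \log \omega_t / \log t = (1-\gamma)/2$ almost surely.

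The main obstacle, and the only genuinely delicate point, is the interplay between the rate $c_3/\log t$ in Theorem \ref{thm:clique} --- which is too slow for Borel--Cantelli along the natural geometric subsequence --- and the need to keep the subsequence dense enough (in the logarithmic scale) that monotonicity interpolation loses nothing. The resolution is precisely that a polynomially-spaced-in-$\log$ subsequence such as $\log t_k = k^2$ simultaneously makes $\sum_k \P(A_k^c) \le \sum_k c_3/k^2 < \infty$ and keeps $\log t_{k+1}/\log t_k \to 1$; one should double-check that $c_3$ does not depend on $t$ (it depends only on $\delta$, as stated) so that the tail sum is genuinely finite. Everything else is routine: the monotonicity of $\omega_t$ is immediate from the dynamics, and the countable intersection over $\delta \downarrow 0$ is standard.
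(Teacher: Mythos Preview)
Your proof is correct and follows essentially the same route as the paper: pass to a subsequence with $\log t_k \asymp k^2$ (the paper uses $t_k = 2^{k^2}$, you use $t_k = \lfloor e^{k^2}\rfloor$) so that the $c_3/\log t$ bound from Theorem~\ref{thm:clique} becomes summable, apply Borel--Cantelli, and then interpolate via the monotonicity of $\omega_t$ together with $\log t_{k+1}/\log t_k \to 1$. Your explicit countable intersection over $\delta = 1/n$ is exactly how the paper's ``since \eqref{eq:l} and \eqref{eq:u} hold for any $\delta$'' step should be read.
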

In \cite{ARS21C} the authors prove the convergence of the clique in the $\log$ scale for $f(t) \equiv p$, with~$p\in [0,1]$. Their result states that 
\[
		\lim_{t \to \infty }\frac{\log \omega_t}{\log t } = \frac{1-p}{2-p}, \; \text{almost surely.}
		\]
Then Corollary \ref{cor:cliquesRV} can be seen as some sort of limit of the above convergence when the parameter $p$ goes to zero. From this perspective, if the parameter~$p$ goes to zero as a slowly varying function, then one obtains the expected $\frac{1}{2}$ as the limit. However, if $p$ goes too fast to zero, then a limit smaller than $\frac{1}{2}$ is obtained.

\subsubsection{Clustering coefficient}
The global clustering coefficient is a statistic that measures the extent to which vertices in a graph tend to cluster together. It is defined as the ratio of the number of triangles (complete subgraphs of size three) in the graph to the total number of connected triples of vertices (a.k.a cherries).

Due to the fact that in our settings $G_t(f)$ might be a multigraph with multiple edges or self-loops, we will compute the global clustering for the {\it simplification} of~$G_t(f)$, that is, given~$G_t(f)$, we obtain a simple graph $\widehat{G}_t(f)$ obtained from~$G_t(f)$ by removing multiple edges and loops.

Given a simple graph $H$ we will let $\Delta(H)$ be the number of triangles in $H$. Whereas, $\Lambda(H)$ denotes the number of cherries (paths of length two) in $H$. In the case of a multigraph $G$, we will simply write $\Delta(G)$ and $\Lambda(G)$ meaning $\Delta(\widehat{G})$ and $\Lambda(\widehat{G})$, respectively. That is, $\Delta(G)$ and $\Lambda(G)$ count the number of triangles and cherries respectively without multiplicities. Then, we define the global clustering coefficient of $G_t(f)$ as 
\begin{equation*}
    c^{\rm glob}_t = c^{\rm glob}(G_t(f)) := 3\times \frac{\Delta(G_t(f))}{\Lambda(G_t(f))}.
\end{equation*}
B. Bollob\'{a}s and O. Riordan studied the BA-model with $m\ge 2$ in \cite{bollrior}. In this model, a new vertex with $m$ randomly chosen neighbors is added to the graph at each step. The authors proved that $\mathbb{E}[c^{\rm glob}_t]$ decreases at the rate of $\log^2(t)/t$, and the expected number of triangles at time $t$ is roughly $\log^3(t)$. N. Eggemann and S. D. Noble studied a variation of this model, called the affine version, where vertices are selected based on their degree and a constant of attractiveness $\delta$ in \cite{EN11}. They proved that, for any positive $\delta$, the expected number of triangles is of order $\log^2(t)$, and $\mathbb{E}[c^{\rm glob}_t]$ decays as $\log(t)/t$.

In \cite{JM15}, E. Jacob and P. M\"{o}rters investigated the global clustering for a different model which also takes into account spatial proximity of vertices to create connections. Their study found that $c^{\rm glob}_t$ converges in probability to a constant whose positivity depends on the choice of parameters. In \cite{SHL19}, C. Stegehuis, R. van der Hofstad and J. Leeuwaarden, also investigate clustering for random graph models with latent geometry.

In our study, we establish a concentration inequality result for $c^{\rm glob}_t$ that shows that in our case, $c^{\rm glob}_t$ decays polynomialy fast and the exponent depends on the index of regular variation of $f$. Additionaly, we show an almost surely convergence of $c^{\rm glob}_t$ in the right scale.

\begin{theorem}[Clustering coefficient for RV functions]\label{thm:clustering}Assume that $f$ decreases to zero and it is a regularly varying function with index of regular variation $-\gamma$,  for $\gamma \in [0,1)$. Then, for every $\delta \in (0, 1)$, there exist positive constants $c_1$ and $c_2$ depending on $\delta$ only such that
    $$
    \Pd\left( c_1t^{-(1-\gamma)(1+\delta)/2} \le c^{\rm glob}_t \le c_2t^{-(1-\gamma)(1-\delta)/2}\right) \ge 1 -\frac{1}{\log t}.
    $$
    
\end{theorem}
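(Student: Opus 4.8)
\emph{Sketch of the argument.}

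Since $c^{\rm glob}_t = 3\,\Delta(G_t)/\Lambda(G_t)$, the plan is to prove that, with probability at least $1-O(1/\log t)$,
\[
t^{\,3(1-\gamma)/2 - C\delta}\ \le\ \Delta(G_t)\ \le\ t^{\,3(1-\gamma)/2 + C\delta}
\qquad\text{and}\qquad
t^{\,2(1-\gamma) - C\delta}\ \le\ \Lambda(G_t)\ \le\ t^{\,2(1-\gamma) + C\delta}
\]
for a universal constant $C$; dividing and renaming $\delta$ yields the statement. The orders of $\Delta$ and $\Lambda$ are explained by the picture in which $G_t$ consists of a clique on its first $\asymp\omega_t\asymp t^{(1-\gamma)/2}$ vertices plus an almost tree-like periphery attached mostly to the oldest vertices: then $\Delta\asymp\binom{\omega_t}{3}$, while the oldest vertices are each adjacent to a positive fraction of all $\asymp|G_t|\asymp t^{1-\gamma}$ vertices, so $\Lambda\asymp|G_t|^2$; since $\omega_t\asymp|G_t|^{1/2}$ this gives $c^{\rm glob}_t\asymp\omega_t^3/|G_t|^2\asymp1/\omega_t$, the reciprocal relation announced in the abstract.

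The two lower bounds are the more elementary ones. The bound on $\Delta$ (which, together with the upper bound on $\Lambda$ below, gives $c^{\rm glob}_t\ge c_1t^{-(1-\gamma)(1+\delta)/2}$) is immediate from Theorem~\ref{t:largecliques}: with probability $1-O(1/\log t)$ there is a clique of size $\omega_t\ge c\,t^{(1-\gamma)(1-\delta')/2}$, whence $\Delta(G_t)\ge\binom{\omega_t}{3}\ge c'\,t^{3(1-\gamma)(1-\delta')/2}$. The bound on $\Lambda$ (which, with the upper bound on $\Delta$ below, gives $c^{\rm glob}_t\le c_2t^{-(1-\gamma)(1-\delta)/2}$) uses $\Lambda(G_t)\ge\binom{\deg_{\widehat{G}_t}(v_1)}{2}$, where $v_1$ is the initial vertex: a martingale estimate of the type already employed for the degree bounds behind Theorems~\ref{t:largecliques}--\ref{thm:clique} shows $\deg_{G_t}(v_1)\ge t^{1-\delta'}$ with high probability, so each newly created vertex attaches its birth edge to $v_1$ with substantial conditional probability; combining this with $|G_t|\ge t^{1-\gamma-\delta'}$ (also with high probability) and a concentration bound yields that $\ge c\,t^{1-\gamma-\delta'}$ distinct vertices attach directly to $v_1$, hence $\deg_{\widehat{G}_t}(v_1)\ge c\,t^{1-\gamma-\delta'}$ and $\Lambda(G_t)\ge c\,t^{2(1-\gamma)-2\delta'}$.

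The two upper bounds both reduce to one estimate. Labelling the vertices $v_1,v_2,\dots$ in order of creation and putting $A:=t^{\,1-\gamma+\delta'}$, the claim is that with probability $1-O(1/\log t)$ one has $|G_t|\le A$ and $\deg_{\widehat{G}_t}(v_j)\le A/j$ for every $j\le|G_t|$. Granting this, the cherry bound is $\Lambda(G_t)=\sum_j\binom{\deg_{\widehat{G}_t}(v_j)}{2}\le\tfrac12\sum_{j\ge1}(A/j)^2\le C A^2=C\,t^{2(1-\gamma)+2\delta'}$. For the triangle bound, write $\Delta(G_t)=\tfrac13\sum_{\{u,v\}\in\mathcal{E}(\widehat{G}_t)}|N(u)\cap N(v)|$ and bound $|N(v_i)\cap N(v_j)|\le\deg_{\widehat{G}_t}(v_j)$ when $i<j$, so that $\Delta(G_t)\le\tfrac13\sum_j d^-(v_j)\,\deg_{\widehat{G}_t}(v_j)$, where $d^-(v_j):=\#\{i<j:\,v_i\sim v_j\}\le\min\!\big(j-1,\,\deg_{\widehat{G}_t}(v_j)\big)\le\min(j,\,A/j)$; splitting the sum at $j\asymp\sqrt A$ (below it the summand is $\le A$ and there are $\asymp\sqrt A$ terms, above it the summand is $\le(A/j)^2$) gives $\Delta(G_t)\le CA^{3/2}=C\,t^{3(1-\gamma)/2+O(\delta')}$. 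Taking $\delta'$ a suitably small multiple of $\delta$ and adding the $O(1/\log t)$ failure probabilities finishes the proof.

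The crux is the estimate $\deg_{\widehat{G}_t}(v_j)\le A/j$: the $j$-th oldest vertex should have only $\approx|G_t|/j$ \emph{distinct} neighbours, although its degree in the multigraph $G_t$ is of the much larger order $t/s_j$ (with $s_j\asymp j^{1/(1-\gamma)}$ its birth step). The reason is that, once the early vertices have formed their clique, the preferential-attachment mass incident to $v_j$ is with high probability concentrated on pairs $\{v_i,v_j\}$ with $i$ small, which are already edges, so edge-steps touching $v_j$ overwhelmingly create multiple edges rather than new neighbours; making this quantitative requires controlling, along the process, the conditional probability that a given step produces a genuinely new neighbour of $v_j$ and summing these contributions through a martingale concentration inequality, relying on the degree-evolution and $|G_t|$-concentration estimates built up for Theorems~\ref{t:largecliques}--\ref{thm:clique}. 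An equivalent way to package what is needed is the bound $|\mathcal{E}(\widehat{G}_t)|\le t^{\,1-\gamma+\delta'}$ on the number of distinct edges, from which $\Lambda(G_t)\le\tfrac12\big(\max_v\deg_{\widehat{G}_t}(v)\big)\cdot2|\mathcal{E}(\widehat{G}_t)|\le t^{2(1-\gamma)+2\delta'}$ follows using only $\max_v\deg_{\widehat{G}_t}(v)\le|G_t|$.
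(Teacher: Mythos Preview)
Your outline is structurally the paper's: bound $\Delta(G_t)$ and $\Lambda(G_t)$ separately, get the lower bound on $\Delta$ from Theorem~\ref{t:largecliques}, the lower bound on $\Lambda$ by exhibiting an early vertex with $\gtrsim t^{1-\gamma-\delta'}$ distinct neighbours (the paper does exactly this in Theorem~\ref{thm:cherries}, using Lemma~\ref{thm:deglowerbound} to find a high-degree $v_*$ and then counting vertex-step attachments to it between times $t$ and $2t$), and reduce both upper bounds to the estimate $|\mathcal{E}(\widehat{G}_t)|\le t^{1-\gamma+\delta'}$ (Theorem~\ref{thm:edgesghat}, proved by bounding $\Ed[e^{ij}_t]$ pairwise via Lemma~\ref{lemma:expeijt} and summing).

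The one substantive difference is how the triangle upper bound is extracted. The paper simply invokes the deterministic inequality $\Delta(H)\le C\,|\mathcal{E}(H)|^{3/2}$ (Rivin's theorem) once $|\mathcal{E}(\widehat{G}_t)|$ is controlled. Your primary route instead goes through the uniform per-vertex bound $\deg_{\widehat{G}_t}(v_j)\le A/j$ for \emph{all} $j$ simultaneously with high probability, followed by your $d^-$-splitting computation (which is, in effect, a bespoke proof of the $|E|^{3/2}$ bound). Be aware that this uniform-in-$j$ degree bound is strictly stronger than anything the paper establishes: Theorem~\ref{thm:edgesghat} gives only an expectation bound on the \emph{aggregate} $|\mathcal{E}(\widehat{G}_t)|$, upgraded to high probability by Markov; turning the pairwise estimates $\Ed[e^{ij}_t]$ into a statement that holds for every $j$ at once would require additional concentration input that you have not supplied. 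Your ``equivalent way'' at the end---controlling $|\mathcal{E}(\widehat{G}_t)|$ and using $\max_v\deg_{\widehat{G}_t}(v)\le |G_t|$ for $\Lambda$---is exactly what the paper does for the cherry upper bound, and pairing that same edge bound with Rivin's theorem is how the paper gets the triangle upper bound with no further work.
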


The bounds and rates given by the above theorem are good enough to show the convergence of the clustering coefficient at the $\log$ scale.
\begin{corollary}[Convergence of clustering coefficient]\label{cor:clusteringRV} Assume that $f$ decreases to zero and it is a regularly varying function with index of regular variation $-\gamma$,  for $\gamma \in [0,1)$. Then,
    \[
    \lim_{t \to \infty }\frac{\log c^{\rm glob}_t}{\log t } = -\frac{1-\gamma}{2}, \; \text{almost surely.}
    \]

\end{corollary}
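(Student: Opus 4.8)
The plan is to derive Corollary \ref{cor:clusteringRV} from the concentration estimate in Theorem \ref{thm:clustering} by the standard Borel--Cantelli-plus-monotonicity argument along a polynomially sparse subsequence. First I would fix $\delta \in (0,1)$ and apply Theorem \ref{thm:clustering} along the subsequence $t_k = 2^k$ (or $t_k = \lfloor e^k \rfloor$). Since $\sum_k (\log t_k)^{-1} = \sum_k (k\log 2)^{-1}$ diverges, we cannot use Borel--Cantelli directly on this subsequence; instead I would take a faster subsequence, say $t_k = 2^{k^2}$ (so that $\sum_k (\log t_k)^{-1} = \sum_k (k^2 \log 2)^{-1} < \infty$), and conclude via Borel--Cantelli that almost surely, for all large $k$,
\[
    c_1 t_k^{-(1-\gamma)(1+\delta)/2} \le c^{\rm glob}_{t_k} \le c_2 t_k^{-(1-\gamma)(1-\delta)/2},
\]
which gives
\[
    -\frac{(1-\gamma)(1+\delta)}{2} \le \liminf_{k\to\infty} \frac{\log c^{\rm glob}_{t_k}}{\log t_k} \le \limsup_{k\to\infty} \frac{\log c^{\rm glob}_{t_k}}{\log t_k} \le -\frac{(1-\gamma)(1-\delta)}{2}.
\]

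Next I would interpolate between consecutive subsequence points. For $t \in [t_k, t_{k+1})$ one needs to control $c^{\rm glob}_t = 3\Delta(G_t(f))/\Lambda(G_t(f))$ by comparison with the endpoint values. The key structural facts are that $\Lambda(\widehat{G}_t(f))$ is nondecreasing in $t$ (edges and vertices are only added, and the simplification of $G_t$ is a subgraph of the simplification of $G_{t+1}$ as a simple graph on the same or larger vertex set), and that $\Delta(\widehat{G}_t(f))$ is likewise nondecreasing. Hence
\[
    c^{\rm glob}_t \le 3\frac{\Delta(G_{t_{k+1}}(f))}{\Lambda(G_{t_k}(f))} \quad\text{and}\quad c^{\rm glob}_t \ge 3\frac{\Delta(G_{t_k}(f))}{\Lambda(G_{t_{k+1}}(f))}.
\]
Because $t_{k+1}/t_k = t_k^{2k+1}$ grows super-polynomially, a naive ratio bound of this form is too lossy; so instead I would interpolate with a slowly growing subsequence $t_k = \lfloor (1+\e_k)^k \rfloor$ with $\e_k \to 0$ slowly, or more simply note that it suffices to prove the limit along $t_k = \lfloor e^{k} \rfloor$ after all, using the refined observation that the divergence of $\sum (\log t_k)^{-1}$ is only logarithmic and can be tamed: replace $\delta$ by a sequence $\delta_k \downarrow 0$ and apply Theorem \ref{thm:clustering} with $\delta = \delta_k$ at time $t_k$, choosing $\delta_k$ so that $\sum_k (\log t_k)^{-1}$ still diverges but the error probabilities, after a union bound over a block, are summable. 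The cleanest route, which I would ultimately take, is: apply Theorem \ref{thm:clustering} with a fixed $\delta$ along $t_k = 2^k$; the events $A_k = \{c^{\rm glob}_{2^k} \notin [c_1 2^{-k(1-\gamma)(1+\delta)/2}, c_2 2^{-k(1-\gamma)(1-\delta)/2}]\}$ have $\P(A_k) \le 1/(k\log 2)$, which is not summable, but one can boost the probability in Theorem \ref{thm:clustering} — inspecting its proof, the $1/\log t$ bound should actually be improvable to $t^{-c}$ for some $c>0$, or one simply applies the theorem at $t = 2^{k^{2}}$ where summability holds, and then handles the (now large) gaps by choosing $\delta$ small and absorbing the polynomial ratio $ (t_{k+1}/t_k)^{(1-\gamma)(1+\delta)/2}$ into the exponent: since $\log(t_{k+1}/t_k)/\log t_k = ((k+1)^2 - k^2)/k^2 \to 0$, the interpolation error in the exponent vanishes, and we get the same liminf/limsup bounds for the full sequence.

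Finally, since $\delta \in (0,1)$ was arbitrary, intersecting the almost-sure events over a countable sequence $\delta \downarrow 0$ yields
\[
    \lim_{t\to\infty} \frac{\log c^{\rm glob}_t}{\log t} = -\frac{1-\gamma}{2} \quad\text{almost surely,}
\]
which is the claim. The main obstacle is the interpolation step: Theorem \ref{thm:clustering} only gives a $1/\log t$ error probability, which is too weak for a direct Borel--Cantelli argument along a polynomially-spaced subsequence, so the real work is in choosing the subsequence sparse enough for summability while keeping the multiplicative gaps between consecutive terms sub-polynomial (in the $\log$-exponent sense) so that monotonicity of $\Delta$ and $\Lambda$ transfers the bounds to all intermediate times without degrading the exponent. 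A secondary point requiring care is verifying the monotonicity of $\Delta(\widehat{G}_t)$ and $\Lambda(\widehat{G}_t)$ under the dynamics — adding an edge between existing vertices can only create new triangles and cherries and never destroys existing ones, and a vertex-step adds one new cherry-free leaf, so both quantities are indeed nondecreasing, making the sandwich bounds above valid.
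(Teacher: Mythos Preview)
Your overall strategy---Borel--Cantelli along the subsequence $t_k = 2^{k^2}$, followed by a sandwich argument using the monotonicity of $\Delta$ and $\Lambda$---is exactly the paper's approach, and your observation that $\log(t_{k+1}/t_k)/\log t_k \to 0$ is the correct reason the interpolation error vanishes in the exponent. However, there is a genuine gap in the way you set things up: you try to drive the whole argument from Theorem~\ref{thm:clustering}, which only controls the \emph{ratio} $c^{\rm glob}_t = 3\Delta/\Lambda$, and this is not enough for your sandwich step. Your bound
\[
c^{\rm glob}_t \;\le\; 3\,\frac{\Delta(G_{t_{k+1}})}{\Lambda(G_{t_k})} \;=\; c^{\rm glob}_{t_{k+1}} \cdot \frac{\Lambda(G_{t_{k+1}})}{\Lambda(G_{t_k})}
\]
requires control on the growth factor $\Lambda(G_{t_{k+1}})/\Lambda(G_{t_k})$ (and similarly for $\Delta$), and Theorem~\ref{thm:clustering} gives no information about $\Lambda$ or $\Delta$ individually. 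The phrase ``absorbing the polynomial ratio $(t_{k+1}/t_k)^{(1-\gamma)(1+\delta)/2}$'' presupposes exactly the polynomial growth estimate on $\Lambda$ (or $\Delta$) that you have not established.

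The paper sidesteps this cleanly by \emph{decomposing first}: write
\[
\frac{\log c^{\rm glob}_t}{\log t} \;=\; \frac{\log 3}{\log t} + \frac{\log \Delta(G_t)}{\log t} - \frac{\log \Lambda(G_t)}{\log t},
\]
and then run the $2^{k^2}$-subsequence-plus-monotonicity argument separately on $\Delta$ and on $\Lambda$, using the separate concentration estimates of Theorems~\ref{thm:triangles} and~\ref{thm:cherries} (which have the same $C/\log t$ error bound). Each of $\Delta$ and $\Lambda$ is monotone, so the sandwich is immediate for each, giving $\log\Delta/\log t \to \tfrac{3(1-\gamma)}{2}$ and $\log\Lambda/\log t \to 2(1-\gamma)$ almost surely; subtracting yields the claim. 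Your proposal already contains every ingredient of this---you just need to apply it to $\Delta$ and $\Lambda$ individually rather than to $c^{\rm glob}$ directly.
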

In \cite{ARS21C} the authors prove that for $f(t)\equiv p$, for $p\in [0,1]$, the following limit holds
$$
\lim_{t \to \infty }\frac{\log c^{\rm glob}(G_t(f))}{\log t } = \frac{3(1-p)}{2-p} -2+p,\; \text{almost surely.}
$$
Similarly to the case of the clique number, the case in which $f$ is a slowly varying function that decreases to zero can be seen as the limit case when $p$ goes to zero. 

Observe that Theorem \ref{thm:clustering} combined with Theorem \ref{thm:clique} shows a surprisingly intriguing reciprocal relation between global clustering and clique number. We state this relation in the result below.
\begin{corollary}[Inverse Relation]\label{cor:inverserel}Assume that $f$ decreases to zero and it is a regularly varying function with index of regular variation $-\gamma$,  for $\gamma \in [0,1)$. Then, for every $\delta \in (0, 1)$, there exist positive constants $c_1$ and $c_2$ depending on $\delta$ only such that
    $$
    \Pd\left( \frac{c_1}{\omega(G_t(f))^{1+\delta}} \le c^{\rm glob}(G_t(f)) \le \frac{c_2}{\omega(G_t(f))^{1-\delta}}\right) \ge 1 -\frac{1}{\log t}.
    $$
    
\end{corollary}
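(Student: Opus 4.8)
The corollary is a soft consequence of Theorems~\ref{thm:clique} and~\ref{thm:clustering}: it amounts to eliminating the variable $t$ between their conclusions. Fix $\delta\in(0,1)$, write $a:=(1-\gamma)/2\in(0,\tfrac12]$, and introduce the auxiliary parameter $\delta_1:=\delta/3$. Applying both theorems with $\delta_1$ playing the role of $\delta$ and intersecting the two events, we get that with probability at least $1-c/\log t$ (for some $c=c(\delta)$) all four inequalities
\[
b_1\,t^{a(1-\delta_1)}\le \omega(G_t(f))\le b_2\,t^{a(1+\delta_1)},\qquad b_1'\,t^{-a(1+\delta_1)}\le c^{\rm glob}(G_t(f))\le b_2'\,t^{-a(1-\delta_1)}
\]
hold, where $b_1,b_2,b_1',b_2'$ depend only on $\delta$. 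From here the argument is purely algebraic: the bounds on $\omega(G_t(f))$ are used to trade the powers of $t$ in the bounds on $c^{\rm glob}(G_t(f))$ for powers of $\omega(G_t(f))$.

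For the upper bound, $\omega(G_t(f))\le b_2 t^{a(1+\delta_1)}$ rearranges to $t^{-a(1-\delta_1)}\le (b_2/\omega(G_t(f)))^{(1-\delta_1)/(1+\delta_1)}$, and inserting this into $c^{\rm glob}(G_t(f))\le b_2' t^{-a(1-\delta_1)}$ gives $c^{\rm glob}(G_t(f))\le c_2\,\omega(G_t(f))^{-(1-\delta_1)/(1+\delta_1)}$ with $c_2=c_2(\delta)$. Symmetrically, $\omega(G_t(f))\ge b_1 t^{a(1-\delta_1)}$ yields $t^{-a(1+\delta_1)}\ge (b_1/\omega(G_t(f)))^{(1+\delta_1)/(1-\delta_1)}$, so that $c^{\rm glob}(G_t(f))\ge b_1' t^{-a(1+\delta_1)}\ge c_1\,\omega(G_t(f))^{-(1+\delta_1)/(1-\delta_1)}$. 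Since $\omega(G_t(f))\ge 1$, to finish it suffices that the exponents satisfy $(1-\delta_1)/(1+\delta_1)\ge 1-\delta$ and $(1+\delta_1)/(1-\delta_1)\le 1+\delta$; a direct computation shows that $\delta_1=\delta/3$ does the job for every $\delta\in(0,1)$. (On the event above $\omega(G_t(f))\to\infty$, so the degenerate case $\omega(G_t(f))\le 1$, where the lower bound would need separate attention, is confined to small $t$, for which the statement is in any case vacuous.)

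I do not expect any genuine obstacle here; the work is entirely bookkeeping. The points that require a little care are: choosing $\delta_1$ as an explicit function of $\delta$ so that both exponent comparisons hold simultaneously; checking that the constants $c_1,c_2$ produced are explicit, finite and positive powers of $b_1,b_1',b_2,b_2'$, hence depend only on $\delta$; and observing that the failure probability of the combined event is governed by the worse of the two $O(1/\log t)$ rates appearing in Theorems~\ref{thm:clique} and~\ref{thm:clustering}. The constant in front of $1/\log t$ can then be brought down to $1$, matching the stated bound, by tracking the constants in the proofs of those two theorems (or, since the lower bound on $c^{\rm glob}_t$ there is driven by the large clique on the first vertices, by noting that that same event already places $\omega_t$ in the required window).
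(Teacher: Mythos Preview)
Your proposal is correct and follows essentially the same approach as the paper: combine Theorems~\ref{thm:clique} and~\ref{thm:clustering} on the intersection of the two high-probability events and eliminate~$t$ algebraically. The paper's own proof is in fact more terse (``adjusting the $\delta$'s properly and combining the above results finishes the proof''), whereas you actually carry out the exponent bookkeeping with $\delta_1=\delta/3$ and verify the two inequalities $(1-\delta_1)/(1+\delta_1)\ge 1-\delta$ and $(1+\delta_1)/(1-\delta_1)\le 1+\delta$.
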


\subsubsection{Comparing $G_t(f)$ and $G_t(h)$ }
If two edge-step functions are ``close" to each other in some sense, then one should expect that the random processes they generate should be ``close" as well. In order to formalize this intuition, we have developed a grand coupling mechanism that enables us to generate all processes $\{G_t(f)\}_t$, for any $f$, from the same source of randomness. The coupling allows us to compare the statistical properties of~$G_t(f)$ and $G_t(h)$ by simply comparing the functions~$f$ and $h$. This can be seen as a {\it Transfer Principle} result that allows us to transfer results for $G_t(f)$ to $G_t(h)$ under some assumptions on the relation between $f$ and $h$. 

The idea that $G_t(f)$ is close to $G_t(h)$ whenever $f$ and $h$ are close is formalized in the theorem below, which is a consequence of our Grand Coupling introduced at Section \ref{s:randomtree}. In words, if $f$ and $h$ are close in the~$L_1(\N)$-norm (denoted by $\|\cdot \|_1$), then $\mathrm{Law}(\{G_t(f)\}_{t\ge 1})$ is close to $\mathrm{Law}(\{G_t(h)\}_{t\ge 1})$ in the \textit{total variation distance}, denoted by $\mathrm{dist}_{TV}(\cdot, \cdot)$. 
\begin{theorem}\label{t:dtv}
	Consider any two $f$ and $h$. We have
	\begin{equation}
	\label{eq:dtv}
	\dist_{TV}\left(\mathrm{Law}(\{G_t(f)\}_{t\geq 1}),\mathrm{Law}(\{G_t(h)\}_{t\geq 1})\right)\leq \| f- h\|_1.
	\end{equation}
	In particular, the following implication holds
	\begin{equation}
	\label{eq:dtv2}
	 f_n \stackrel{L_1(\N)}{\longrightarrow}  f \implies \mathcal{L}((G_t(f_n))_{t\geq 1})\stackrel{\mathrm{dist}_{TV}}{\longrightarrow} \mathcal{L}((G_t(f))_{t\geq 1}).
	\end{equation}
\end{theorem}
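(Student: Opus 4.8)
The plan is to prove \eqref{eq:dtv} by realizing all the processes $\{(G_t(f))_{t\ge1}\}$ on one probability space via a \emph{grand coupling}, in such a way that $(G_t(f))_t$ and $(G_t(h))_t$ stay equal until the first time the ``coin decisions'' driven by $f$ and $h$ disagree; then \eqref{eq:dtv} follows from the coupling inequality and a union bound, and \eqref{eq:dtv2} is immediate.

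First I would set up the coupling (this is exactly the grand coupling of Section~\ref{s:randomtree}). On a single probability space, carry an i.i.d.\ sequence $\{U_t\}_{t\ge1}$ of $\mathrm{Uniform}[0,1]$ variables — used at step $t$ to decide whether the operation is a vertex- or an edge-step — together with, for each $t$, an independent pair $(A_t,B_t)$ of $[0,1]$-variables used to pick vertices according to the relevant size-biased law by the inverse-CDF method. Given any $f$, set $Z_t(f):=\mathbbm{1}\{U_t\le f(t)\}$, so $Z_t(f)\sim\mathrm{Ber}(f(t))$, and run the dynamics: starting from the common initial graph $G_0$, at step $t$ perform a vertex-operation on $G_{t-1}(f)$ if $Z_t(f)=1$, choosing the target by feeding $A_t$ into the inverse CDF of $\pi_{G_{t-1}(f)}$; otherwise perform an edge-operation, choosing the two endpoints from $A_t$ and $B_t$ in the same way. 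This produces, simultaneously and from the \emph{same} randomness, a copy of $(G_t(f))_{t\ge1}$ with the correct law for every fixed $f$.

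The key is a pathwise statement: if $Z_s(f)=Z_s(h)$ for all $s\le t$, then $G_s(f)=G_s(h)$ for all $s\le t$. This is proved by induction on $t$: assuming $G_{t-1}(f)=G_{t-1}(h)$, the two processes take the same type of step at time $t$ (since $Z_t(f)=Z_t(h)$), and since $\pi_{G_{t-1}(f)}=\pi_{G_{t-1}(h)}$, feeding the same $A_t$ (and, on an edge-step, the same $B_t$) into the same inverse CDF selects the same vertex or vertices; hence the same operation is applied and $G_t(f)=G_t(h)$. Therefore
\[
\{(G_t(f))_{t\ge1}\neq (G_t(h))_{t\ge1}\}\ \subseteq\ \bigcup_{t\ge1}\{Z_t(f)\neq Z_t(h)\},
\]
and since $Z_t(f)\neq Z_t(h)$ exactly when $U_t$ lies between $f(t)$ and $h(t)$, we have $\Pd(Z_t(f)\neq Z_t(h))=|f(t)-h(t)|$. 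By the coupling inequality applied to the two $\{(G_t(\cdot))_{t\ge1}\}$-valued random elements together with a union bound,
\begin{align*}
\dist_{TV}&\big(\mathrm{Law}((G_t(f))_{t\ge1}),\mathrm{Law}((G_t(h))_{t\ge1})\big)\\
&\le \Pd\Big(\bigcup_{t\ge1}\{Z_t(f)\neq Z_t(h)\}\Big)\le \sum_{t\ge1}|f(t)-h(t)|=\|f-h\|_1,
\end{align*}
which is \eqref{eq:dtv}. Then \eqref{eq:dtv2} follows at once: if $f_n\to f$ in $L_1(\N)$, the right-hand side $\|f_n-f\|_1\to0$, so the process laws converge in total variation.

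The only genuine work is the careful construction of the grand coupling: checking measurability of the inverse-CDF vertex selection, and — crucially — arranging that every process $G_\cdot(f)$ is driven by the \emph{same} variables $\{U_t,A_t,B_t\}$ so that the pathwise comparison above is legitimate. Once that infrastructure from Section~\ref{s:randomtree} is in place, the remainder is the one-line union bound displayed above. A minor point worth noting is that $\pi_G$ depends on the full multigraph (including loops and multiple edges), but this causes no trouble because, as long as their $Z$-coordinates agree, the coupled processes share the entire multigraph history.
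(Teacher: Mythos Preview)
Your proof is correct, but the coupling you describe is \emph{not} the one in Section~\ref{s:randomtree}, despite your claim that it is ``exactly'' that construction. You build a direct coupling: shared uniforms $U_t$ for the Bernoulli coins and shared $(A_t,B_t)$ fed into inverse CDFs of $\pi_{G_{t-1}}$, then argue inductively that the two processes agree as long as their coin outcomes agree. The paper instead builds a single doubly-labeled Barab\'asi--Albert tree $\mathcal{T}_t$ (where each vertex carries a PA-chosen ``ghost'' label $\ell(v_j)$ and a uniform $U_j$) and obtains $G_t(f)$ by \emph{collapsing} $v_j$ onto $\ell(v_j)$ whenever $U_j>f(j)$; the bound then comes from $\Pd(f(\mathcal{T}_t)\neq h(\mathcal{T}_t))\le\sum_i\Pd(U_i\in(f(i)\wedge h(i),f(i)\vee h(i)))$. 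Your route is arguably more elementary for this theorem alone, since it avoids the collapse map and the associativity check. What the paper's construction buys is that $f\mapsto f(\mathcal{T}_t)$ is a deterministic map on a single random object, which makes the monotonicity statements of Proposition~\ref{p:diammaxdegProp} (maximum degree decreasing, diameter increasing in $f$) transparent via the nested collapse structure; your inverse-CDF coupling does not obviously yield those comparisons, because once $Z_t(f)\neq Z_t(h)$ the two graphs diverge and the subsequent $\pi$-distributions no longer match.
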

The theorem presented above represents an interesting perturbative statement, illuminating the ability to introduce a small amount of noise $\epsilon = \epsilon(t)$ into the function $f$, while preserving the integrity of the original process up to a maximum error of $| \epsilon |_1$ in total variation distance. This result affirms the robustness of the process to external perturbations.

One difficulty in our setup is the degree of generality we work with. Our case replaces the parameter $p \in (0,1]$ in the models investigated in \cite{ARS17, CLBook, CF03} by any non-negative real function~$f$ with $\| f \|_{\infty} \le 1$. The introduction of such function naturally increases the complexity of any analytical argument one may expect to rely on and makes it harder to discover \textit{threshold} phenomena. 
On the other hand, the fact that we are working under such large space of parameters is also one of the reasons why results like Theorems \ref{thm:clique} and \ref{thm:clique} or even the Corollaries \ref{cor:cliquesRV} and \ref{cor:clusteringRV} are surprising, because they depend on a simple functional of the parameter $f$: its index of regular variation at infinity. Under such wide degree of freedom, there is no reason to expect such simple characterizations of statistics to hold true.

\subsection{Main technical ideas}
In order to prove Theorem~\ref{t:largecliques} and Theorem \ref{t:dtv}, we construct an auxiliary process that we call the \textit{doubly-labeled random tree process}, $\{\cal{T}_{t}\}_{t \ge 1 }$. In essence, this process is a realization of the traditional BA-model (obtained in our settings choosing $f \equiv 1$) where each vertex has two labels attached to it. We then show in Proposition~\ref{prop:gcoupling} how to generate~$G_t(f)$ from $\cal{T}_t$ using the information on those labels. This procedure allows us to generate $G_t(f)$ and $G_t(h)$, for two distinct functions~$f$ and~$h$, from exactly the same source of randomness. The upshot is that the parameter function $f$ may be seen as a map from the space of doubly-labeled trees to the space of (multi)graphs -- therefore it makes sense to use the notation~$f(\cal{T}_t)$. Furthermore, this map has the crucial property of being \textit{monotonic} (in a way we make precise latter). Roughly speaking, if $f \le h$, then certain monotonic statistics respect this order, so if~$\zeta$ is such an statistic, then $\zeta(f(\cal{T}_t)) \le \zeta(h(\cal{T}_t))$. In Proposition~\ref{p:diammaxdegProp} we give important examples of suitable monotonic statistics. The main advantage of our machinery is that it allows us to transpose some results about graphs generated for functions in a particular regime to another just by comparing the functions themselves. This important feature of our approach allows us, for instance, to use known results about cliques when $f$ is taken to be a constant less than one to propagate this result down other regimes of functions.

While the coupling approach offers a clear pathway for extending results of one regime of~$f$ to another, some statistics still demand further work. As for an upper bound for the clique number, we obtain it by controlling the number of triangles. In this approach, we investigate the simplification of $G_t$ denoted by $\widehat{G}_t$. By providing a sharp upper bound on the number of edges of $\widehat{G}_t$, we can use deterministic results to bound the number of triangles, which by its turn can provide an upper bound for the order of the largest clique. The upper bound on the number of edges in $\widehat{G}_t$ requires a sharp upper bound for vertices degree and for the number of edges between two vertices. The upper bound on the number of edges between vertices is obtained using a method inspired by the martingale methods used in the analysis of vertices' degree in the literature of preferential attachment random graphs.

\section{A Grand Coupling: The doubly-labeled random tree process}\label{s:randomtree}
In this section we introduce a stochastic process~$\{\mathcal{T}_t\}_{t\geq 1}$ that provides a grand coupling between the random graphs~$\{G_t(f)\}_{t\geq 1}$ for every possible choice of~$f$.

The process $\{\mathcal{T}_t\}_{t\geq 1}$ is essentially a realization of the Barab\'asi-Albert random tree with the distinction that each vertex has two labels: an earlier vertex chosen according to the preferential attachment rule and an independent uniform random variable. The label consisting in the earlier vertex can be seen as a ``ghost directed edge'', we later use these random labels to collapse subsets of vertices into a single vertex in order to obtain a graph with the same distribution as $G_t(f)$ for any prescribed function~$f$. 

We begin our process with a graph~$\mathcal{T}_1$ consisting as usual in a single vertex and a single loop connecting said vertex to itself. We then inductively construct the labeled graph~$\mathcal{T}_{t+1}$ from~$\mathcal{T}_t$ in the following way:
\begin{figure}[ht]\label{f:gtildedef}
\centering
\includegraphics[scale = .63]{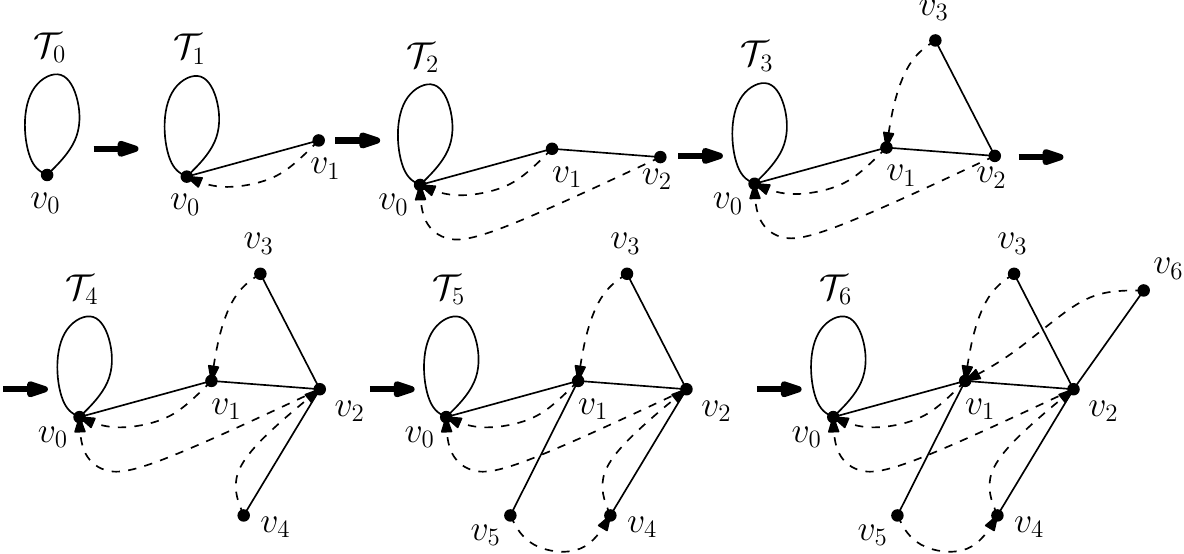}
\vspace{0.2cm}
\caption{A sample of the process~$\{\mathcal{T}_t\}_{t\geq 1}$ up to time~$6$ without the uniform labels. The dashed lines indicate the label~$\ell(v_j)$ taken by each vertex~$v_j$.}
\end{figure}
\begin{itemize}
\item [(i)] We add to~$\mathcal{T}_t$ a vertex~$v_{t+1}$;

\item [(ii)] We tag~$v_{t+1}$ with a random label~$\ell(v_{t+1})$ chosen from the set~$V(\mathcal{T}_t)$ with the preferential attachment rule, that is, with probability of choosing~$u\in V(\mathcal{T}_t)$ proportional to the degree of~$u$ in~$\mathcal{T}_t$;

\item [(iii)] Independently from the step above, we add an edge~$\{w,v_{t+1}\}$ to~$E(\mathcal{T}_t)$ where~$w\in V(\mathcal{T}_t)$ is also randomly chosen according to the preferential attachment rule, see Figure~\ref{f:B17}.
\end{itemize}
We then finish the construction by tagging each vertex~$v_j\in V(\mathcal{T}_t)$ with a second label consisting in an independent random variable~$U_j$ with uniform distribution on the interval~$[0,1]$, as shown in Figure~\ref{f:B17}. We note that only the actual edges contribute to the degree taken in consideration in the preferential attachment rule, the tags are not considered.

We now make precise the notation $f(\cal{T}_t)$ which indicates that a function $f$ may also be seen as a function that maps a doubly-labeled tree to a (multi)graph. Our goal is to define this map in such way that $f(\mathcal{T}_t) \stackrel{d}{=} G_t(f)$.

In order to do so, let us fix such a function~$f$. Given~$v_j\in V(\mathcal{T}_t)$, we compare~$U_j$ to~$f(j)$. If~$U_j\le f(j)$, we do nothing. Otherwise, we collapse~$v_j$ onto its label~$\ell(v_j)$, that is, we consider the set~$\{v_j,\ell(v_j)\}$ to be a single vertex with the same labels as~$\ell(v_j)$. We then update the label of all vertices~$v$ such that~$\ell(v)=v_j$ to~$\{v_j,\ell(v_j)\}$. This procedure is associative in the sense that the order of the vertices on which we perform this operation does not affect the final resulting graph, as long as we perform it for all the vertices of~$\mathcal{T}_t$. We let then $f(\cal{T}_t)$ be the (multi)graph obtained when this procedure has run over all the $t$ vertices of $\cal{T}_t$. We refer to Figure~\ref{f:B17} as an illustration of the final outcome. 

In the next proposition we prove that $f(\cal{T}_t)$ is indeed distributed as $G_t(f)$. 
  \begin{figure}
    \centering
    \begin{subfigure}{0.4\textwidth}\label{f:gcaldef}
        \centering
        \includegraphics[width=\textwidth]{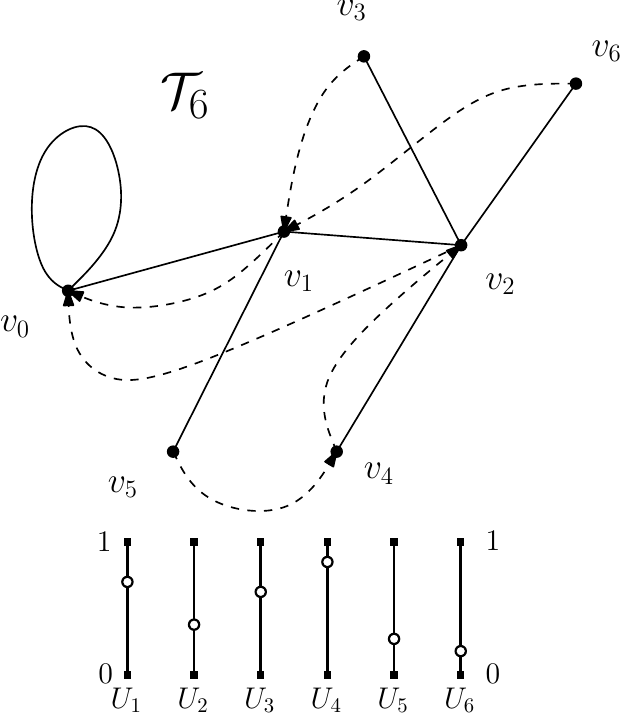}
    \end{subfigure}
    \begin{subfigure}{0.4\textwidth}\label{f:gcaldef}
        \centering
        \includegraphics[width=\textwidth]{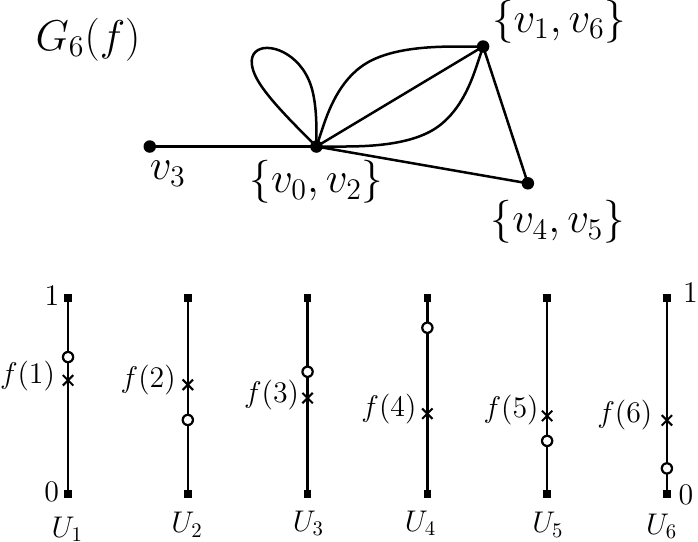}
    \end{subfigure}
    \caption{On the left, the graph~$\mathcal{T}_6$, where each vertex receives an independent uniform random variable label. On the right, how one can sample the distribution of~$G_6(f)$ using the~$\mathcal{T}_6$.}
    \label{f:B17}
  \end{figure}
  
\begin{proposition}
\label{prop:gcoupling}
Let~$\cal{T}_t$ be the doubly-labeled tree above defined. Then,
\[
f(\mathcal{T}_t) \stackrel{d}{=} G_t(f).
\]
\end{proposition}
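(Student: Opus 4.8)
The plan is to prove the distributional identity by induction on $t$, coupling the growth of $\mathcal{T}_t$ with the growth of $G_t(f)$ step by step. The base case $t=1$ is immediate since both $\mathcal{T}_1$ and $G_1(f)$ are the single vertex with a self-loop (and the collapsing procedure does nothing). For the inductive step, I would first establish a key structural observation: the collapsing map commutes with the addition of vertex $v_{t+1}$. More precisely, if we write $\Phi_f$ for the collapsing operation parametrized by $f$, then $f(\mathcal{T}_{t+1})$ can be obtained from $f(\mathcal{T}_t)$ by one more operation depending on whether $U_{t+1} \le f(t+1)$, on the edge-label $w$, and on the vertex-label $\ell(v_{t+1})$. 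This is where the associativity remark in the text does the heavy lifting: since the order of collapsing is irrelevant, we may collapse all of $\{v_1,\dots,v_t\}$ first (obtaining $f(\mathcal{T}_t)$ with its induced labels), and then handle $v_{t+1}$ last.

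The core of the argument is then a case analysis conditioned on $\mathcal{F}_t$, the $\sigma$-algebra generated by $\mathcal{T}_t$ together with the uniform labels $U_1,\dots,U_t$ (so that $f(\mathcal{T}_t)$ is $\mathcal{F}_t$-measurable). Conditionally on $\mathcal{F}_t$: with probability $f(t+1)$ we have $U_{t+1} \le f(t+1)$, we keep $v_{t+1}$, and the new edge $\{w, v_{t+1}\}$ attaches $v_{t+1}$ to the vertex of $f(\mathcal{T}_t)$ onto which $w$ has been collapsed — I must check that $w$, chosen in $\mathcal{T}_t$ with probability proportional to its degree in $\mathcal{T}_t$, lands on a given vertex $u$ of $f(\mathcal{T}_t)$ with probability proportional to $\mathrm{degree}_{f(\mathcal{T}_t)}(u)$; this is precisely the statement that collapsing preserves total degree within each collapsed block, which holds because collapsing a vertex onto its label merges the two vertices and their incident edges without creating or destroying edges (self-loops formed this way still contribute, consistent with the convention that loops count toward degree). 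Hence this event reproduces exactly a vertex-step of $G_t(f)$. With the complementary probability $1 - f(t+1)$ we have $U_{t+1} > f(t+1)$, so $v_{t+1}$ is collapsed onto $\ell(v_{t+1})$; the net effect on the graph is that the edge $\{w, v_{t+1}\}$ becomes an edge $\{w', u'\}$ where $w'$ and $u'$ are the images in $f(\mathcal{T}_t)$ of $w$ and $\ell(v_{t+1})$ respectively. Since $w$ and $\ell(v_{t+1})$ were chosen \emph{independently}, each according to the preferential attachment rule on $\mathcal{T}_t$, the same degree-preservation argument shows $w'$ and $u'$ are two independent $\pi_{f(\mathcal{T}_t)}$-distributed vertices — exactly an edge-step of $G_t(f)$.

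Combining these two cases, conditionally on $\mathcal{F}_t$ the graph $f(\mathcal{T}_{t+1})$ is obtained from $f(\mathcal{T}_t)$ by performing a vertex-operation with probability $f(t+1)$ and an edge-operation with probability $1-f(t+1)$, with the internal randomness matching $\pi_{f(\mathcal{T}_t)}$ in each case. By the inductive hypothesis $f(\mathcal{T}_t) \stackrel{d}{=} G_t(f)$, and since the transition rule just described is identical to the defining dynamics of $\{G_t(f)\}$ with $Z_{t+1} \sim \mathrm{Ber}(f(t+1))$, we conclude $f(\mathcal{T}_{t+1}) \stackrel{d}{=} G_{t+1}(f)$, closing the induction.

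I expect the main obstacle to be the bookkeeping in the degree-preservation claim — specifically, verifying carefully that when a chain of vertices $v_{j_1} \to v_{j_2} \to \cdots$ all get collapsed together, the resulting single vertex has degree equal to the sum of the original degrees (counting multiplicities and loops correctly), so that a preferential-attachment choice in $\mathcal{T}_t$ pushes forward to a preferential-attachment choice in $f(\mathcal{T}_t)$. A clean way to handle this is to note that collapsing never removes an edge; it only identifies endpoints, so $\sum_{u \in f(\mathcal{T}_t)} \mathrm{degree}(u) = \sum_{v \in \mathcal{T}_t} \mathrm{degree}(v) = 2t$, and the degree of a collapsed block is the sum of degrees of its constituents. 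With that lemma isolated and proved once, the rest of the argument is the routine case analysis above.
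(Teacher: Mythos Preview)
Your proposal is correct and follows essentially the same inductive strategy as the paper: use associativity to defer the collapse of $v_{t+1}$ until after $f(\mathcal{T}_t)$ has been formed, then match the two cases $U_{t+1}\le f(t+1)$ and $U_{t+1}>f(t+1)$ to the vertex- and edge-steps of $\{G_t(f)\}$. You are in fact more explicit than the paper about the degree-preservation lemma (that a preferential-attachment choice in $\mathcal{T}_t$ pushes forward to one in $f(\mathcal{T}_t)$), which the paper leaves implicit.
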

\begin{proof}
We first observe that the associativity of the collapsing operation and the independence of the sequence $\{U_j\}_{j\geq 1}$ from the previous operations imply that we can glue together the vertex $v_{t+1}$ to $\ell(v_{t+1})$ whenever $U_{t+1}>f(t+1)$ right after we complete step~(iii) of the above construction by induction. The resulting graph has either a new vertex~$v_{t+1}$ with an edge~$\{w(v_{t+1}),v_{t+1}\}$ or an edge~$\{w(v_{t+1}),\ell(v_{t+1})\}$ with the exact same probability distribution as the~$(t+1)$-th step in the construction of the graph~$(G_t(f))_{t\geq 1}$. By induction, both random graphs have the same distribution.
\end{proof}

\section{Applications of The Grand Coupling}
In the light of the Section \ref{s:randomtree} and Proposition~\ref{prop:gcoupling}, throughout this section, we will tacitly assume that all processes of the form $\{G_t(f)\}_{t\ge 1}$ for all possible functions $f$ are on the same probability space provided by our previous results.

A straightforward consequence of Proposition~\ref{prop:gcoupling} is Theorem~\ref{t:dtv} which roughly speaking states that functions close to each other in the $L_1(\N)$-norm generate essentially the same processes. Once we have the machinery provided by Proposition~\ref{prop:gcoupling}, the proof of this fact becomes a simple application of the union bound.
\begin{proof}[Proof of Theorem~\ref{t:dtv}] Fixed two possible functions $f$ and $h$, by Proposition~\ref{prop:gcoupling} we have that
	\begin{equation}
	\mathrm{Law}(\{G_t(f)\}_{t\geq 1}) = \mathrm{Law}(\{f(\cal{T}_t)\}_{t\geq 1}).
	\end{equation}
Thus
	\begin{equation}
	\begin{split}
	\dist_{TV}\left(\mathrm{Law}(\{G_t(f)\}_{t\geq 1}),\mathrm{Law}(\{G_t(h)\}_{t\geq 1})\right)
	&\leq\Pd\left( \{f(\cal{T}_t)\}_{t\geq 1} \neq \{h(\cal{T}_t)\}_{t\geq 1}     \right)
	\\
	&\leq \sum_{i=1}^\infty \Pd\left(    U_i   \in (f(i)\wedge h(i),f(i)\vee h(i)  )  \right)
	\\
	&=
	\sum_{i=1}^\infty | f (i)- g (i)|
	\\
	&= \| f- g\|_1,
	\end{split}
	\end{equation}
	and the above equation immediately implies~\eqref{eq:dtv2}.
\end{proof}
\subsection{Monotone Statistics}
The next step is to use the coupling provided by the doubly-labeled random tree process to compare statistics of graphs generated by different functions. This method will allow us to transport results we have obtained for a fixed~$f$ to other edge-step functions by comparing them with $f$. To do this we introduce the notion of \textit{increasing (decreasing)} graph observable.
\begin{definition}[Monotone Statistics]\label{def:graphobs} We say that a (multi)graph statistic $\zeta$ is \textit{increasing} if, given two edge-step functions $f$ and $h$, we have that 
	\begin{equation}\label{eq:incrob}
		f(s) \le h(s), \forall s \le t \implies	\zeta(f(\cal{T}_t)) \le \zeta(h(\cal{T}_t)) \; \textit{a.s.}
	\end{equation}
When the second inequality in (\ref{eq:incrob}) holds with ``$\ge$", we say $\zeta$ is decreasing.
\end{definition}
A first example of an increasing statistic is the total number of vertices. Indeed, since every vertex~$v_j$ of $\mathcal{T}_t$ remain preserved under $f$ whenever its $U_j$ label is less than $f(j)$, it follows that every $f$-preserved $v_j$ is $h$-preserved as well.

The next proposition states that the maximum degree is a decreasing statistic whereas the diameter is an increasing one.
\begin{proposition}\label{p:diammaxdegProp} Let $f$ and $h$ be two functions satisfying the relation $f(s)\le h(s)$ for all $s \in (1, \infty)$. Then,
\begin{enumerate}[(a)]
	\item the maximum degree is a \emph{decreasing }graph statistic. More precisely, if we let $D_{\textit{max}}(G)$ denote the maximum degree of a (multi)graph $G$, then
	\[
	\Pd \left(\forall t \in \N, \; D_{\textit{max}}(G_t(f)) \ge D_{\textit{max}}(G_t(h))\right) = 1;
	\]
	\item the diameter is an \emph{increasing }graph statistic. More precisely,
	\[
	\Pd \left(\forall t \in \N, \; \diam(G_t(f)) \le \diam(G_t(h))\right) = 1
	\]
\end{enumerate}

\end{proposition}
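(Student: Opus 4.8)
The plan is to exploit the grand coupling from Proposition~\ref{prop:gcoupling}: both $G_t(f)$ and $G_t(h)$ are realized on the same doubly-labeled tree $\mathcal{T}_t$, as $f(\mathcal{T}_t)$ and $h(\mathcal{T}_t)$, and the only difference between the two is which vertices $v_j$ get collapsed onto their labels $\ell(v_j)$. Since $f(s)\le h(s)$ for all $s$, the event $\{U_j > h(j)\}$ is contained in $\{U_j > f(j)\}$, so every vertex collapsed in $h(\mathcal{T}_t)$ is also collapsed in $f(\mathcal{T}_t)$. Hence $f(\mathcal{T}_t)$ is obtained from $h(\mathcal{T}_t)$ by performing some additional collapsing operations; equivalently, there is a natural surjection $\varphi\colon V(h(\mathcal{T}_t)) \to V(f(\mathcal{T}_t))$ that sends a vertex to the class into which it is eventually merged, and this map sends edges of $h(\mathcal{T}_t)$ to edges (or loops) of $f(\mathcal{T}_t)$. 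The argument should be run inductively on $t$, so that the surjection at time $t$ extends the one at time $t-1$ (a newly added vertex either stays new or is collapsed onto an already-present class). The main work is to verify that collapsing a single vertex onto its label has the claimed effects on maximum degree and diameter; the rest is bookkeeping through the induction.

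For part~(a), the key observation is that collapsing $v_j$ onto $u=\ell(v_j)$ replaces the two vertices $v_j$ and $u$ by one vertex whose degree is $\mathrm{degree}(v_j)+\mathrm{degree}(u)$ (edges between $v_j$ and $u$ become loops, but since multiplicities and loops are \emph{counted} in the degree used for $\pi_G$ and we are talking about $G_t$, not $\widehat{G}_t$, no degree is lost, and in fact a loop contributes $2$). In particular the maximum degree can only weakly increase under a single collapse: the new merged vertex has degree at least the max of the two old ones, and every other vertex keeps its degree. Iterating over all the extra collapses that take $h(\mathcal{T}_t)$ to $f(\mathcal{T}_t)$ gives $D_{\textit{max}}(G_t(f))\ge D_{\textit{max}}(G_t(h))$ for each fixed $t$, and since the coupling is simultaneous over all $t$ and there are only countably many $t$, the almost-sure statement over $\forall t\in\N$ follows at no extra cost.

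For part~(b), I would show that the surjection $\varphi$ above is distance-non-increasing: if $x,y\in V(h(\mathcal{T}_t))$ are joined by a path of length $k$ in $h(\mathcal{T}_t)$, then applying $\varphi$ edge by edge yields a walk of length $\le k$ from $\varphi(x)$ to $\varphi(y)$ in $f(\mathcal{T}_t)$ (some edges may collapse to loops, only shortening the walk), so $\mathrm{dist}_{f(\mathcal{T}_t)}(\varphi(x),\varphi(y))\le \mathrm{dist}_{h(\mathcal{T}_t)}(x,y)$. Because $\varphi$ is onto, taking the maximum over all pairs gives $\diam(f(\mathcal{T}_t))\le \diam(h(\mathcal{T}_t))$, i.e.\ $\diam(G_t(f))\le\diam(G_t(h))$; again intersecting over $t\in\N$ preserves probability one. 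The one point that needs a little care — and which I expect to be the main obstacle — is making the "additional collapsing" picture precise at the level of a single time step in the inductive construction: one must check that the collapse operation genuinely is associative and that collapsing onto a label that may itself later be collapsed is consistent with first building $\mathcal{T}_t$ and then collapsing (this associativity is asserted in the construction preceding Proposition~\ref{prop:gcoupling}, so it can be cited), and that a vertex preserved under $f$ is automatically preserved under $h$ so that the surjection is well defined and edge-preserving. Once that structural lemma is in hand, parts~(a) and~(b) are immediate consequences of the single-collapse estimates above.
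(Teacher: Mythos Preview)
Your proposal is correct and follows essentially the same approach as the paper: both arguments rest on the grand coupling and the observation that, since $f\le h$, every collapse performed in $h(\mathcal{T}_t)$ is also performed in $f(\mathcal{T}_t)$, so $f(\mathcal{T}_t)$ is a further quotient of $h(\mathcal{T}_t)$. For part~(b) your argument via the edge-preserving surjection $\varphi$ is virtually identical to the paper's path-shortening argument. For part~(a) your route is actually a bit more direct: you use the single observation that collapsing two vertices sums their (multigraph) degrees and leaves all other degrees unchanged, whereas the paper computes the degree of a vertex in $f(\mathcal{T}_t)$ via a breadth-first exploration along ``dashed'' (collapsed) labels and then compares the resulting exploration trees for $f$ and $h$; both arguments encode the same monotonicity, yours just packages it more cleanly.
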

\begin{proof} The proofs follow from an analysis of the action of $f$ over $\cal{T}_t$.

\noindent \underline{Proof of part (a):}
Given a vertex $v \in f(\cal{T}_t)$ we let $\mathsf{solid}$-$\mathsf{neighbors}(v)$ be the set of its neighbors in $\cal{T}_t$ (that is, vertices sharing a solid edge with it, which would come from a vertex-step). We also define the set $\mathsf{dashed}$-$\mathsf{neighbors}(v)$ as the vertices $v_s$ in $\cal{T}_t$ with label $\ell$ pointing to $v$ and whose corresponding uniform label $U_s$ is larger than $f(s)$. To find the degree $D_t(v)$ of $v$ in $f(\cal{T}_t)$, one performs the following computation:
\begin{itemize}
    \item initialize $D_t(v)$ as the cardinality of $\mathsf{solid}$-$\mathsf{neighbors}(v)$ and the set $\mathsf{to}$-$\mathsf{explore}$ as $\mathsf{dashed}$-$\mathsf{neighbors}(v)$;
    \item sequentially in some arbitrary order, for every $w \in \mathsf{to}$-$\mathsf{explore}$, sum the cardinality of $\mathsf{solid}$-$\mathsf{neighbors}(w)$ to $D_t(v)$ and add the vertices of $\mathsf{dashed}$-$\mathsf{neighbors}(w)$ to $\mathsf{to}$-$\mathsf{explore}$;
    \item repeat the above until no more vertices are in $\mathsf{to}$-$\mathsf{explore}$.
\end{itemize}
In other words, given the directed tree with $v$ as the root, and edges coming from labels that become edge-steps after the action of $f$, we perform a graph-search algorithm, like breadth-first search, and sum the $\mathcal{T}_t$-degree of each queried vertex. Note that each vertex of $\cal{T}_t$ can be accounted for multiple times in this procedure when we sum the degree in $\cal{T}_t$. This algorithm does yield the degree of vertex $v$, as there is a bijection between each half-edge incident to $v$ in $f(\cal{T}_t)$ and the multiset of vertices of $\cal{T}_t$ in $\mathsf{solid}$-$\mathsf{neighbors}(w)$ as we explore vertices $w \in \mathsf{to}$-$\mathsf{explore}$.

Now we note that if $f(s) \leq h(s)$ for all $s$, the $\mathsf{dashed}$-$\mathsf{neighbors}$-directed tree associated to the algorithm that discovers the degree of a vertex $v$ in $f(\cal{T}_t)$ contains the tree that discovers the degree of the same vertex in $h(\cal{T}_t)$. So if $v_s$ is the vertex with maximum degree in $h(\cal{T}_t)$, and its associated uniform is larger than $f(s)$, by direct comparison we obtain the result. If not, then this means that $v_s$ will be collapsed into some vertex $w \in f(\cal{T}_t)$, and the tree from the degree-discovering algorithm for this vertex will contain the tree associated to $v_s \in h(\cal{T}_t)$. This finishes the proof of part (a).

\noindent \underline{Proof of part (b):} Observe that if $u$ and $w$ are vertices whose $U$-label is less than $f$ and such that $u \leftrightarrow w$ in $G_t(f)$, then $u$ and $w$ are possibly not connected by a single edge in $G_t(h)$, in other words, if $\dist_{G_t(f)}(u,w) = 1$ then $\dist_{G_t(h)}(u,w) \ge 1$. Therefore, if $v_0 =u, v_1, \cdots, v_k = w$ is a minimal path in $G_t(f)$, by the previous observation, this path induces a path in $G_t(h)$ whose length is equal to or greater than $k$.

On the other hand, if $u=v_0 , v_1, \cdots, v_k = w$ is a path realizing the minimal distance between~$u$ and~$w$, in $G_t(h)$ we have that this path induces another path in $G_t(f)$ of same length or less. To see this, just notice that, if $U_j > f(j)$ and $\ell(v_j)$ points to a vertex outside the path for some $j \in \{2,3,\cdots,k-1\}$, then, after identifying $v_j$ to its $\ell$-label, this operation does not increase the path length. This is enough to conclude the proof.
\end{proof}
Having small diameter, typically $\log$ of the number of nodes, is a feature shared by several real-life networks, as observed by S. Strogatz and D. J. Watts in \cite{SW98}, where the coined the term `small-world' networks. Thus investigating the order of the diameter is vital for models whose purpose might be modeling real-life networks. 

In \cite{ARS20}, the authors showed a characterization of the diameter of $G_t(f)$ for regularly varying function~$f$ in terms of the index of regular variation. The proposition below offers a universal upper bound that guarantees that $G_t(f)$ is small-world for any choice of $f$.
\begin{proposition}[Universal Upper bound for the Diameter] There exists a universal constant $c$ such that 
	$$
	\Pd \left(\diam(G_t(f)) \le c\log t \right) = 1- o(1).
	$$
\end{proposition}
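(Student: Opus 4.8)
The plan is to reduce the universal diameter bound for $G_t(f)$ to the known diameter bound for the pure Barabási–Albert tree $\mathcal{T}_t$ (the case $f \equiv 1$) via the Grand Coupling and the monotonicity established in Proposition~\ref{p:diammaxdegProp}(b). Since $f(s) \le 1$ for every $s$, and $1(\mathcal{T}_t) = \mathcal{T}_t$ (no vertex is ever collapsed when the threshold is $1$), part (b) of that proposition gives $\diam(G_t(f)) \le \diam(\mathcal{T}_t)$ almost surely, for every $t$ and every admissible $f$, on the common probability space. So it suffices to prove that there is a universal constant $c$ with $\Pd(\diam(\mathcal{T}_t) \le c \log t) = 1 - o(1)$.

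The next step is to handle the diameter of $\mathcal{T}_t$, which is the random recursive/preferential attachment tree with one extra edge per step. Here I would invoke (or reprove) the classical fact that the BA tree on $t$ vertices has diameter $O(\log t)$ with high probability; this is standard in the literature (e.g. Bollobás–Riordan-type arguments, or the results on heights of preferential attachment trees giving height $\sim \frac{e}{... } \log t$ up to constants). The cleanest self-contained route: bound the height of $\mathcal{T}_t$ rooted at $v_1$. Writing $h(v_j)$ for the graph distance from $v_j$ to the root $v_1$ along the tree of solid edges, one has the recursive description $h(v_{t+1}) = h(w) + 1$ where $w$ is chosen by the preferential attachment rule among $V(\mathcal{T}_t)$. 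A union bound over the number of vertices at each "generation", combined with the observation that the probability a newly arriving vertex attaches to a fixed vertex at distance $k$ from the root is at most (its degree)/(total degree) $\le$ (something summable), lets one show via a first-moment / Chernoff estimate that no vertex has height exceeding $c\log t$ except with probability $o(1)$. Since $\diam(\mathcal{T}_t) \le 2\, \mathrm{height}(\mathcal{T}_t)$, this yields the claim. The extra (preferential-attachment-chosen) edge added at step (iii) only decreases distances, so it does not affect the upper bound.

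Assembling the two pieces: fix $f$, realize $G_t(f) = f(\mathcal{T}_t)$ and $\mathcal{T}_t = 1(\mathcal{T}_t)$ on the same space, apply Proposition~\ref{p:diammaxdegProp}(b) with $h \equiv 1$ to get $\diam(G_t(f)) \le \diam(\mathcal{T}_t)$ surely, and then apply the BA-tree diameter estimate to conclude $\Pd(\diam(G_t(f)) \le c\log t) \ge \Pd(\diam(\mathcal{T}_t) \le c\log t) = 1 - o(1)$, with $c$ not depending on $f$. This $c$ is universal precisely because the coupling dominates every $f$ by the single reference process $\mathcal{T}_t$.

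The main obstacle is the second step — supplying a clean proof (or a precise citation) that $\diam(\mathcal{T}_t) = O(\log t)$ with high probability. The comparison step is essentially immediate once Proposition~\ref{p:diammaxdegProp} is in hand; all the real content is the logarithmic height bound for the preferential attachment tree, which requires the martingale/first-moment argument sketched above to control how fast generations can grow. One subtlety worth care: the initial graph $G_0$ (or $\mathcal{T}_1$) is a single vertex with a loop, so the "root" and the total-degree normalization must be set up consistently, but this affects only the constant, not the order $\log t$.
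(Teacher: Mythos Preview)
Your proposal is correct and follows exactly the paper's approach: compare $f$ to the constant function $1$ via Proposition~\ref{p:diammaxdegProp}(b), then invoke the known $O(\log t)$ diameter bound for the Barab\'asi--Albert tree. The only difference is that the paper dispatches the second step by a direct citation to Bollob\'as--Riordan \cite{BR04} rather than sketching a height argument (and note a minor slip in your write-up: step~(ii) in the construction of $\mathcal{T}_t$ adds only a \emph{label}, not an edge, so there is no ``extra'' edge beyond the single solid edge of step~(iii)---$\mathcal{T}_t$ really is just the BA tree with the initial loop).
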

\begin{proof}
The proof follows from item (b) of Proposition \ref{p:diammaxdegProp} and known results for the classical preferential attachment model. Indeed, in \cite{BR04}, B. Bollob\'{a}s and O. Riordan proved that for $f \equiv 1$ the diameter of $G_t(f)$ is bounded from above by $c\log t$ asymptotically almost surely. Since $f$ is bounded by $1$, a combination of this result with item (b) of Proposition \ref{p:diammaxdegProp} proves the result.
\end{proof}

\section{Proof of Theorem \ref{t:largecliques}: existence of large cliques}
In this section we keep exploring the machinery developed in previous sections. Here, we transfer the existence of large cliques using the function itself, i.e., if $h$ is such that~$G_t(h)$ has a clique of order $K$ and $f$ another function satisfying $f(s) \le h(s)$ for all $s \in (1, \infty)$, then the clique existence propagates to $G_t(f)$ too but with a possibly $f$-dependent order. 

We will let $G_t(p)$ be the graph generated by $f\equiv p \in (0,1)$ and we will let $\omega_t(p)$ be the clique number of $G_t(p)$. And for the sake of convenience, we state the result about $\omega_t(p)$.
\begin{theorem}[Theorem~$1$ from~\cite{ARS17}]\label{teo:community} For any $\e>0$ and every $p \in (0,1)$, it follows that
	\begin{equation}
	\Pd\left( \omega_t(p) \ge t^{(1-\e)\frac{1-p}{2-p}}\right) = 1-o(1).
	\end{equation}
\end{theorem}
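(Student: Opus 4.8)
This is Theorem~1 of~\cite{ARS17}, so strictly speaking no new proof is needed; I nonetheless outline the argument, since it is the prototype for the lower bounds appearing later in this paper. The plan is to exhibit a clique on the first $K:=\lceil t^{(1-\e)(1-p)/(2-p)}\rceil$ vertices $v_1,\dots,v_K$ of $G_t(p)$ (labelled in order of appearance): I will show that with probability $1-o(1)$ every one of the at most $K^2$ pairs $\{v_i,v_j\}$ is joined by at least one edge by time $t$. Since $K\le t$ and $p<1$, the required exponent $(1-\e)\tfrac{1-p}{2-p}$ is the largest value of $a$ for which $K=t^a$ satisfies $K^{2-p}\ll t^{1-p}$, and this will be precisely the point where the exponent enters.

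First I would establish a uniform lower bound for the degrees of these early vertices. Writing $D_s(i)$ for the degree of $v_i$ in $G_s(p)$ and using $|E_s|=s+O(1)$, a short computation gives
\[
\Ed\big[D_{s+1}(i)-D_s(i)\mid\mathcal F_s\big]=\Big(p\cdot\tfrac{1}{2|E_s|}+(1-p)\cdot\tfrac{1}{|E_s|}\Big)D_s(i)=\frac{2-p}{2|E_s|}\,D_s(i),
\]
since a vertex-step raises $D_s(i)$ by one with probability $D_s(i)/(2|E_s|)$ and an edge-step raises it in expectation by $D_s(i)/|E_s|$. Dividing $D_s(i)$ by the deterministic product $\prod_{r<s}\big(1+\tfrac{2-p}{2|E_r|}\big)\asymp (s/\tau_i)^{(2-p)/2}$ (where $\tau_i$ is the birth time of $v_i$, concentrated around $i/p$) turns it into a nonnegative martingale; combining Doob's maximal inequality with an $L^2$ estimate for this martingale — the standard preferential-attachment degree argument — and a union bound over $i\le K$ produces an event $\mathcal G$ with $\Pd(\mathcal G)=1-o(1)$ on which
\[
D_s(i)\ \ge\ c_p\,(s/i)^{(2-p)/2}\qquad\text{for all }i\le K,\ \tau_i\le s\le t .
\]

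Next, on $\mathcal G$ the conditional probability that step $s+1$ inserts the edge $\{v_i,v_j\}$ is at least $(1-p)\cdot 2\cdot\frac{D_s(i)}{2|E_s|}\cdot\frac{D_s(j)}{2|E_s|}\ge c_p'\,s^{-p}(ij)^{-(2-p)/2}\ge c_p'\,s^{-p}K^{-(2-p)}$. Hence, on $\mathcal G$, the indicator that $v_i\not\leftrightarrow v_j$ at time $s$ multiplied by $\exp\{c_p'K^{-(2-p)}\sum_{r\le s}r^{-p}\}$ is a supermartingale, which yields
\[
\Pd\big(v_i\not\leftrightarrow v_j\text{ in }G_t(p),\ \mathcal G\big)\ \le\ \exp\Big\{-c_p''\,K^{-(2-p)}t^{1-p}\Big\}.
\]
Summing over the $\le K^2$ pairs, $\Pd(\omega_t(p)<K)\le \Pd(\mathcal G^c)+K^2\exp\{-c_p''K^{-(2-p)}t^{1-p}\}$; with $K=\lceil t^{(1-\e)(1-p)/(2-p)}\rceil$ one has $K^{-(2-p)}t^{1-p}\ge \tfrac12\,t^{\e(1-p)}$, so both terms tend to $0$, which is the claim.

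The hard part is the uniform-in-$s$ degree lower bound defining $\mathcal G$: the single-vertex estimate is classical, but controlling $\min_{\tau_i\le s\le t}D_s(i)\,(i/s)^{(2-p)/2}$ away from $0$ requires Doob's inequality applied to the rescaled (super)martingale together with a second-moment bound, and then taking the union over the first $K$ vertices is exactly what forces $K$ to be only polynomially large — which is where the exponent $(1-p)/(2-p)$ is born. A secondary nuisance is keeping the various $O(1)$ corrections (in $|E_s|$, in $\tau_i\asymp i/p$, and in $\sum_{r\le t}r^{-p}\asymp t^{1-p}$) from eroding the estimate, but these are absorbed into the constants $c_p,c_p',c_p''$.
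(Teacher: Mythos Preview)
The paper does not prove this statement --- it is quoted from \cite{ARS17} --- but in the course of proving Theorem~\ref{t:largecliques} it describes the structure of that proof: one partitions the early vertices into consecutive blocks of a fixed size~$m=m(\e,\e',p)$ and shows, via Theorem~2 of \cite{ARS17}, that from each block one can select a \emph{leader} so that the set of leaders forms a clique of the required size. Your sketch takes a genuinely different route: you try to show that \emph{all} of the first $K=\lceil t^{(1-\e)(1-p)/(2-p)}\rceil$ vertices are pairwise adjacent.

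There is a real gap in your outline, and it sits exactly at the step you yourself flag as ``the hard part''. A uniform lower bound $D_s(i)\ge c_p(s/i)^{(2-p)/2}$ for every $i\le K$ with a \emph{fixed} constant $c_p>0$ cannot hold on an event of probability $1-o(1)$. For each~$i$ the rescaled degree $D_s(i)(i/s)^{(2-p)/2}$ converges to a nondegenerate positive limit~$\xi_i$, and $\Pd(\xi_i<c_p)$ is bounded below by a positive constant essentially independent of~$i$; hence $\Pd(\min_{i\le K}\xi_i\ge c_p)\to 0$ as $K\to\infty$, not to~$1$. Doob's $L^2$ maximal inequality controls the \emph{supremum} of a nonnegative martingale, not its infimum, so it does not deliver the lower tail you need. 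This difficulty is plausibly why \cite{ARS17} resorts to the block--leader device: only one good vertex per block is required, not all of them. A related symptom is that in your own computation the exponent $(1-p)/(2-p)$ is actually produced by the pair-connection estimate (the requirement $K^{-(2-p)}t^{1-p}\to\infty$), not by the degree union bound as you assert in the last paragraph. Your strategy can likely be salvaged by letting $c_p$ decay like $t^{-\beta}$ for small~$\beta$ and absorbing the resulting loss into~$\e$, but as written the event~$\mathcal G$ does not have probability $1-o(1)$.
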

We can now use the doubly-labeled tree together with the above results in order to prove Theorem~\ref{t:largecliques}. In order to avoid clutter, let us introduce a new notation for the expected number of vertices,
\begin{equation}
F(t) := \Ed [|G_t(f)|] = 1+ \sum_{s=2}^{t}f(s).
\end{equation}
In order to formalize the strategy of transporting a clique in $G_t(p)$ down to $G_t(f)$ it will be important to control the time the $j$-th vertex of $G_t(p)$ is added. In our proof, we essentially condition on the existence of a large clique in $G_t(p)$ and then using the independence of the $U$-labels we pay a $f$-dependent price to guarantee that part of the cliques' vertices are also a vertex in $G_t(f)$. However, if the vertices in the clique took too long to be added, then the $f$-price we pay to force them to be vertices in $G_t(f)$ as well is too expensive. This would result in a much smaller clique in $G_t(f)$.

Consequently, it will be important to use the fact that for the process $\{G_t(p)\}_{t\ge 0}$, does not take too much time between vertices addition, since the time interval between the addition of any two vertices follows a geometric distribution of parameter $p$. We can formalize this by defining inductively the following sequence of stopping times, let $\tau_0(p) \equiv 0$ and define $\tau_k(p)$ as below
\begin{equation*}
    \tau_k(p) := \inf \{ n \ge \tau_{k-1}(p) \; : \; U_n \le p \}.
\end{equation*}
Then, noticing that $\tau_k(p) - \tau_{k-1}(p) \sim Geo(p)$ for any $k>0$ we have that
\begin{equation}\label{eq:bounddeltatauk}
    \Pd\left( \tau_k(p) - \tau_{k-1}(p) \ge \frac{10\log t}{\log(1/(1-p))}\right) \le t^{-10}.
\end{equation}
The above bound will give us control over the time vertices have been added by the process~$\{G_t(p)\}_{t\ge 0}$. With the discussed above we prove the main result of this section.
\begin{proof}[Proof of Theorem \ref{t:largecliques}]
Given~$\delta>0$ sufficiently small, We begin by letting $\varepsilon$ be small enough and setting $p=\delta/6$ so that
\begin{equation*}
\alpha=\alpha(p,\varepsilon):=\frac{1-p}{2-p}(1-\varepsilon)>\frac{1}{2}\left(1-\frac{\delta}{2}\right).
\end{equation*}
In the proof of Theorem~$1$ of~\cite{ARS17} one uses Theorem~$2$ of~\cite{ARS17} to show that for any $\varepsilon'\in(0,\alpha)$ there exists a fixed integer~$m=m(\e,\e',p)>0$  with the following property: if one divides the set of vertices added by the process $\{G_t(p)\}_{t\ge 1}$ up to time~$t$ into disjoint subsets of~$m$ vertices born \emph{consecutively}, then with high probability (at least $1$ minus a polynomial function of~$t$) from the $t^{\varepsilon'}$-th subset to the $t^{\alpha}/m$-th one, we can select one vertex in each subset in such a way that the subgraph induced by the set of chosen vertices is a complete subgraph of~$G_t(p)$. These vertices are the \textit{leaders} of their block.

Let $\mathcal{K}$ be the random set formed by the leader vertices of the $t^{\varepsilon'}$-th to the $t^{\alpha}/m$-th blocks of $m$ vertices, which form a clique in $G_t(p)$ w.h.p. Observe that in the event these vertices do not form a clique or do not exist $t^{\alpha}/m$ blocks of $m$ vertices the random set $\mathcal{K}$ is empty.

Now, let $N$ be the following random variable
\begin{equation}
    N := \sum_{v_j \in \cal{K}} \mathbb{1} \Lbrace U_j \le f(j)\Rbrace.
\end{equation}
In words, $N$ counts how many leading vertices become a vertex of $G_t(f)$ as well. Here, we adopt the convention that a sum over an empty set of indexes is zero.

To avoid clutter, let us write $k = t^{\alpha}/m$. Given a set of vertices $S = \{v_{t_1}, \dots, v_{t_k}\}$ in $G_t(p)$ the event 
$\{\mathcal{K} = S\}$
then means the vertices $v_{t_j}$, for $j \in \{1,\dots, k\}$ are all vertices of $G_t(p)$ and they are all connected between themselves in $G_t(p)$. Thus, due to independent nature of the $U$-label assignments in the doubly-labeled random tree process the following identify holds
\begin{equation}\label{eq:utj}
    \Pd \left( U_{t_j} \le f(t_j)\mid  \cal{K} = S\right) = p^{-1}f(t_j).
\end{equation}
Additionally, we call $S$ \textit{good} if 
$$
v_{t_j} \in S \implies t_j \le c_1j \cdot m \cdot \log t,
$$
where $c_1 = 10\log(1/(1-p))$. In words, a good set of vertices in $G_t(p)$ are those whose vertices did not take too much time to be added by the process $\{G_t(p)\}_{t \ge 0}$. Observe that by~\eqref{eq:bounddeltatauk} and union bound over the number of vertices yields
\begin{equation}
    \Pd \left( \exists S \subset G_t(p)\; : \; S \text{ is bad }\right) \le t^{-9}.
\end{equation} 

Having the above in mind and letting $m$ be the auxiliary $m$ that appears in proof of Theorem~1 of \cite{ARS17}, define the following event
\begin{equation}
A_{p, \varepsilon', m, k} := \left \lbrace 
\begin{array}{c}
\exists \{s_1, \cdots, s_{k}\} \subset \{t^{\varepsilon'}, \dots, c_1m(t^{\alpha} + t^{\varepsilon'})\log t\} \text{ such that }\\ v_{s_j} \leftrightarrow v_{s_i} \text{ in }G_t(p) \text{ for all } i,j \in [k]
\end{array}
\right \rbrace.
\end{equation}
In the context of the doubly labeled tree process, the event $A_{p, \varepsilon', m, k}$ says that we may find a subset of $k$ vertices of $\cal{T}_t$, all added between times $t^{\varepsilon'}$ and $c_1m(t^{\alpha} + t^{\varepsilon'})\log t$, with the property that the $j$-th vertex $v_{t_j}$ of this subset has its~$U_{t_j}$-label smaller than~$p$. Moreover, when we apply a constant function equals $p$ on $\cal{T}_t$ in the event~$A_{p, \varepsilon', m, k}$, all these~$k$ vertices form a complete graph in the resulting graph $G_t(p)$. 

Theorem~$1$ of~\cite{ARS17} states that setting $k = t^{\alpha}/m$, the event~$A_{p, \varepsilon', m, k}$ occurs with probability at least $1-t^{-\eta}$, for some positive small $\eta$ depending on $p$, which, in our case, is a function of~$\delta$. Thus, for $t$ large enough, we may simply use that
\begin{equation}\label{ineq:probA}
\Pd\left(A_{p, \varepsilon', m, t^{\alpha}/m}\right) \ge 1 - \frac{1}{\log t},
\end{equation}
this bound is good enough to prove Corollary~\ref{cor:cliquesRV}. This is why we are exchanging a polynomial decay by a $\log$ one, to get rid off the dependency on $\delta$ and consequently simplify latter arguments.

Now, using identify \eqref{eq:utj}, the definition of good set and the fact that $f$ is decreasing, we have that
\begin{equation}
    \begin{split}
        \Ed\left[ N\; \middle | \; \mathcal{K} = S, S \text{ is good }\right] & =  p^{-1}\sum_{v_{t_j}  \in S}f(t_j) \ge p^{-1}\sum_{j=t^{\varepsilon'}}^{t^{\alpha}/m}f(c_1m j \log t) \\
        & \ge \frac{F(c_1t^{\alpha}\log t) - F(c_1mt^{\varepsilon'}\log t )}{pc_1m\log t}\\
        & \ge \frac{c_2F(c_1t^{\alpha}\log t)}{\log t}.
    \end{split}
\end{equation}
For some positive constant $c_2$ depending on $\varepsilon',\delta$ and $f$. Moreover, using that the $U$-labels are independent and Chernoff bounds
\begin{equation}\label{eq:chernoff}
    \Pd\left( N \le \frac{c_2F(c_1t^{\alpha}\log t)}{2\log t} \, \middle | \, \mathcal{K} = S, S \text{ is good } \right) \le \exp \Lbrace - \frac{c_2F(c_1t^{\alpha}\log t)}{4\log t}\Rbrace.
\end{equation}
And in order to simplify our writing, let $B$ be the following event
\begin{equation}
    B := \Lbrace \nexists S \subset G_t(p), S \text{ is bad }\Rbrace.
\end{equation}
In the event $B$, all vertices added by the process $\{G_t(p)\}_{t \ge 0}$ did not take too long to be added. That is, the $j$-th vertex added by the process $\{G_t(p)\}_{t \ge 0}$ was added before time $c_1mj\log t$. Also notice that by equation \eqref{eq:bounddeltatauk}, $\Pd(B^c) \le t^{-9}$.

Furthermore, we have the following identity of events
\begin{equation}
    A_{p, \varepsilon', m, t^{\alpha}/m}\cap B = \Lbrace \mathcal{K} \neq \emptyset, \mathcal{K} \text{ is good }\Rbrace
\end{equation}
and for each good $S \subset G_t(p)$ we use $\mathcal{K}(S)$ as a shorthand for
$$
\mathcal{K}(S) := \Lbrace \mathcal{K} = S, S \text{ is good } \Rbrace.
$$
Then, using Equation \eqref{eq:chernoff} and summing over all possible good subsets $S$, we have 
\begin{equation*}
    \begin{split}
        \Pd\left( N \le \frac{c_2F(c_1t^{\alpha}\log t)}{2\log t}, A_{p, \varepsilon', m, t^{\alpha}/m}, B \right) & = \sum_{S}\Pd\left( N \le \frac{c_2F(c_1t^{\alpha}\log t)}{2\log t} \, \middle | \, \mathcal{K}(S) \right) \Pd \left( \mathcal{K}(S)\right) \\
        & \le \exp \Lbrace - \frac{c_2F(c_1t^{\alpha}\log t)}{4\log t}\Rbrace \Pd \left( \mathcal{K} \neq \emptyset, \mathcal{K} \text{ is good }\right).
    \end{split}
\end{equation*}
We then obtain the following bound
\begin{equation}
    \begin{split}
        \Pd\left( N \le \frac{c_2F(c_1t^{\alpha}\log t)}{2\log t} \right) & \le \Pd\left( N \le \frac{c_2F(c_1t^{\alpha}\log t)}{2\log t}, A_{p, \varepsilon', m, t^{\alpha}/m}, B \right) + \Pd\left(A^c_{p, \varepsilon', m, t^{\alpha}/m} \cup B^c\right) \\
        & \le \exp \Lbrace - \frac{c_2F(c_1t^{\alpha}\log t)}{4\log t}\Rbrace  + \frac{2}{\log t},
    \end{split}
\end{equation}
which proves the theorem.
\end{proof}

\section{Proof of Theorems \ref{thm:clique} and \ref{thm:clustering}: clique number and clustering}
In this section we will prove our other two main theorems, which give sharp bounds on the global clustering coefficient and the clique number.
Recall that the global clustering coefficient of $G_t(f)$ is defined as
\begin{equation}\label{def:gcc}
    c^{\rm glob}_t = c^{\rm glob}(G_t(f)) = 3\times \frac{\Delta(G_t(f))}{\Lambda(G_t(f))}.
\end{equation}
For the proof of both theorems will need the bounds on $\Delta(G_t(f))$ and $\Lambda(G_t(f))$ given by the results below. 
\begin{theorem}[Number of Triangles]\label{thm:triangles}Assume that $f$ decreases to zero and it is a regularly varying function with index of regular variation $-\gamma$,  for $\gamma \in [0,1)$. Then, for any $\delta>0$ there exist positive constants $C,C'$ and $C''$ depending on $f$ and $\delta$, such that for all $t$
$$
\Pd \left( C't^{\frac{3(1-\gamma)(1-\delta)}{2}}\le \Delta(G_t(f)) \le Ct^{\frac{3(1-\gamma)(1+\delta)}{2}} \right) \ge 1 - \frac{C''}{\log(t)}
.
$$
\end{theorem}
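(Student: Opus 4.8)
The plan is to prove the two bounds separately, each time leveraging the grand coupling from Section \ref{s:randomtree} together with monotonicity to reduce, as far as possible, to known statements about the constant-parameter model $G_t(p)$ and the classical BA-tree $\mathcal{T}_t$.

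\emph{Lower bound.} For the lower bound I would reuse the clique produced in the proof of Theorem \ref{t:largecliques}. Under the regular variation hypothesis with index $-\gamma$, Karamata-type estimates give $F(t) = \Ed[|G_t(f)|] = \Theta(t^{1-\gamma}L(t))$ for a slowly varying $L$, so for every $\delta>0$ and $t$ large we have $F(c_1 t^{\alpha}\log t) \ge t^{(1-\gamma)\alpha(1-\delta/2)}$ say, with $\alpha$ as in that proof chosen arbitrarily close to $1/2$. Theorem \ref{t:largecliques} then hands us, on an event of probability at least $1 - 2/\log t - o(1)$, a clique in $G_t(f)$ of size $K \ge c\, t^{(1-\gamma)(1-\delta)/2}$ (after absorbing the $\log t$ and the slowly varying factor into the $t^{-(1-\gamma)\delta/2}$ slack, using that slowly varying functions are $o(t^\e)$ and $\omega(t^{-\e})$ for every $\e>0$). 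A clique on $K$ vertices contributes $\binom{K}{3} = \Theta(K^3)$ triangles to $\widehat{G}_t(f)$, so $\Delta(G_t(f)) \ge \binom{K}{3} \ge C' t^{3(1-\gamma)(1-\delta)/2}$ on that same event, which is exactly the claimed lower bound (with a relabelled $\delta$).

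\emph{Upper bound.} This is where the real work sits, and it is precisely the program sketched in the ``Main technical ideas'' subsection. The steps I would carry out: (i) bound the maximum degree of $G_t(f)$ from above --- by Proposition \ref{p:diammaxdegProp}(a) applied with $h \equiv 1$, $D_{\max}(G_t(f)) \le D_{\max}(\mathcal{T}_t)$, and the maximum degree in a BA-tree is $O(t^{1/2+\delta'})$ with high probability by standard P\'olya-urn/martingale arguments; (ii) bound the number of parallel edges between any fixed pair of vertices in $G_t(f)$ by a sharp martingale argument (the one advertised in the introduction as ``inspired by the martingale methods used in the analysis of vertices' degree''), giving that no pair is joined by more than $t^{\delta'}$ edges w.h.p.; (iii) combine (i)--(ii) with the (deterministic) fact that a vertex-step adds one edge and an edge-step adds one edge, so $|E(G_t(f))| \le t$, together with the simplification removing multiplicities, to get a sharp bound on $|E(\widehat{G}_t(f))|$ --- concretely, $|E(\widehat G_t(f))| = O(F(t) \cdot D_{\max}) = O(t^{1-\gamma} t^{1/2+\delta'})$, but more carefully one wants $O(t^{(1-\gamma)(1+\delta)/2}\cdot t^{(1-\gamma)(1+\delta)/2}) = O(t^{(1-\gamma)(1+\delta)})$ by counting edges incident to the low-index vertices separately; (iv) invoke a deterministic graph inequality bounding the number of triangles by the number of edges: $\Delta(H) = O(|E(H)|^{3/2})$ (the Kruskal--Katona / Ordentlich-type bound), which converts an $O(t^{(1-\gamma)(1+\delta)})$ edge bound into $\Delta(G_t(f)) = O(t^{3(1-\gamma)(1+\delta)/2})$.

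\emph{Main obstacle.} I expect step (ii)--(iii), obtaining a bound on $|E(\widehat G_t(f))|$ that is sharp enough (i.e.\ of order $t^{(1-\gamma)(1+\delta)}$ rather than merely $t^{1-\gamma+1/2}$), to be the crux: one must show that although the process performs $\sim (1-\gamma)$-fraction fewer vertex-steps, the edge-steps do not create too many \emph{distinct} new adjacencies, which requires controlling, uniformly over pairs, how often $\pi_{G_s}$ selects an already-adjacent pair, and this in turn needs the degree upper bound fed back in. Carefully organizing the union bound over pairs (there are $\sim F(t)^2$ of them) against the martingale tail, while keeping the exponent at $(1-\gamma)(1+\delta)$, is the delicate accounting; everything else (the Karamata estimates, the deterministic triangle-vs-edge inequality, the clique lower bound) is routine given the tools already in the paper.
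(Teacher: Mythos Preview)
Your overall architecture matches the paper exactly: the lower bound comes from the clique of Theorem~\ref{t:largecliques} via $\binom{K}{3}$, and the upper bound is obtained by first bounding $|E(\widehat G_t(f))|$ and then applying the deterministic inequality $\Delta(H)\le C\,|E(H)|^{3/2}$ (the paper cites Theorem~4 of \cite{R02}). So the plan is right.

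There is, however, a genuine error in your step~(i). Proposition~\ref{p:diammaxdegProp}(a) says the maximum degree is \emph{decreasing} in $f$: if $f\le h$ then $D_{\max}(G_t(f))\ge D_{\max}(G_t(h))$. Applying this with $h\equiv 1$ gives $D_{\max}(G_t(f))\ge D_{\max}(\mathcal T_t)$, the reverse of what you wrote. This is not a typo you can repair: in the regime $f\to 0$ the maximum degree of $G_t(f)$ is in fact of order $t$ (see Lemma~\ref{thm:deglowerbound} and the bound $\phi(t)\le t$), not $t^{1/2+\delta'}$. So the route ``$|E(\widehat G_t)|\lesssim F(t)\cdot D_{\max}$ with $D_{\max}\lesssim t^{1/2}$'' is closed, and your parenthetical ``more carefully one wants\ldots'' does not yet contain a mechanism.

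The paper's way through is to bound $\Ed[\,|E(\widehat G_t)|\,]$ directly (Theorem~\ref{thm:edgesghat}) and then use Markov. The ingredients are: a per-vertex degree upper bound $D_s(i)\le \alpha\,\phi(s)/\psi(i)$ with exponential tails (Lemma~\ref{lemma:upperbounddeg2}, where $\psi(i)=\phi(F^{-1}(i))$), and a bound on the \emph{expected} number of edges between the $i$-th and $j$-th vertices, $\Ed[e^{ij}_t]\lesssim \log^2 t/\psi(i)+\log^4 t\cdot t/(\psi(i)\psi(j))$ plus negligible terms (Lemma~\ref{lemma:expeijt}). Summing $\min\{1,\Ed[e^{ij}_t]\}$ over $i,j\le Ct^{1-\gamma}$, using $\psi(r)\gtrsim r^{1/(1-\gamma)-\delta}$, yields $\Ed[|E(\widehat G_t)|]\le Ct^{1-\gamma+\delta}$; then Markov and the triangle--edge inequality finish. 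So the ``crux'' you identified is real, but it is handled by an expectation computation over pairs rather than by a uniform max-degree bound.
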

\begin{theorem}[Number of Cherries]\label{thm:cherries}Assume that $f$ decreases to zero and it is a regularly varying function with index of regular variation $-\gamma$,  for $\gamma \in [0,1)$. Then, for any $\delta>0$ there exist positive constants $C,C'$ and $C''$ depending on $f$ and $\delta$, such that for all $t$
$$
\Pd \left( C't^{2(1-\gamma)(1-\delta)}\le \Lambda(G_t(f)) \le Ct^{2(1-\gamma)(1+\delta)} \right) \ge 1 - \frac{C''}{\log(t)}
.
$$
\end{theorem}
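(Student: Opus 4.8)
The plan is to bound $\Lambda(\widehat G_t(f))$ both above and below using control of the degree sequence of $\widehat G_t(f)$, since the number of cherries is exactly $\sum_{v} \binom{d(v)}{2}$, where $d(v)$ is the degree of $v$ in the \emph{simplification} $\widehat G_t(f)$. For the upper bound, the key input is a sharp upper bound on the maximum degree $D_{\max}(\widehat G_t(f))$ together with control of the number of vertices $|G_t(f)| \approx F(t) \asymp t f(t) \asymp t^{1-\gamma}$ (up to $t^{\pm\delta}$, by Potter bounds for regularly varying $f$). Since for $f \equiv p$ the degree of an early vertex in the preferential attachment model grows like $t^{1/(2-p)}$, and our coupling (Proposition~\ref{p:diammaxdegProp}(a)) says the maximum degree is a \emph{decreasing} statistic, comparing $f$ with a small constant $p = p(\delta)$ gives $D_{\max}(G_t(f)) \le D_{\max}(G_t(p)) \lesssim t^{1/(2-p)} \le t^{(1-\gamma)(1+\delta)/2 + o(1)}$ whenever $\gamma$ and $\delta$ are compatible — but this is too crude in general, so instead I would directly prove that the degree of the $j$-th vertex in $\widehat G_t(f)$ is, with high probability, at most $t^\delta (F(t)/F(j))^{1/2}$, mirroring the standard martingale argument for degree evolution in preferential attachment but accounting for (i) the edge-steps and (ii) the passage to the simplification (which only decreases degrees, via the edge-multiplicity bounds promised in the ``Main technical ideas'' section). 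Then $\sum_v \binom{d(v)}{2} \le \sum_{j=1}^{|G_t|} d(v_j)^2 \lesssim t^{2\delta} F(t) \sum_{j} F(j)^{-1} \cdot F(t) \lesssim t^{2\delta} F(t)^2 \log t \lesssim t^{2(1-\gamma)(1+\delta)}$ after absorbing logs and adjusting $\delta$.

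For the lower bound, it suffices to exhibit a single vertex (say one of the first few, born at a bounded time) whose degree in $\widehat G_t(f)$ is at least $t^{(1-\gamma)(1-\delta)}$ with probability $1 - O(1/\log t)$; then $\Lambda \ge \binom{d(v)}{2} \gtrsim t^{2(1-\gamma)(1-\delta)}$. The degree of an early vertex $v_1$ in $G_t(f)$ is a sum over all subsequent steps of the indicator that the step (vertex- or edge-) attaches to $v_1$, and by a now-standard Pólya-urn / martingale computation its expectation is of order $F(t)^{1/2} \asymp t^{(1-\gamma)/2}$; concentration around this mean at scale $t^{-\delta}$ follows from Azuma–Hoeffding applied to the associated Doob martingale, as in the literature on preferential attachment degree evolution. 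One must additionally check that passing to the simplification does not destroy more than a $t^\delta$ factor of this degree; this is controlled by the upper bound on the number of multiple edges incident to a fixed vertex (which is poly-logarithmic, much smaller than $t^{\delta}$), a fact that is established in the edge-multiplicity analysis flagged in the technical-ideas section and which I would invoke here.

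The main obstacle I anticipate is the passage from the multigraph $G_t(f)$ to its simplification $\widehat G_t(f)$: the whole degree-martingale machinery lives naturally on the multigraph, where degrees drive the preferential attachment dynamics, whereas cherries and triangles must be counted in $\widehat G_t(f)$. Bridging this gap requires a sufficiently sharp bound on the number of parallel edges (and loops) incident to any given vertex — sharp enough that the correction is absorbed into the $t^{\pm\delta}$ slack. A secondary, more routine obstacle is handling the slowly-varying part of $f$ uniformly: Potter's bounds give $F(j) = j f(j) L(j)$ with $L$ slowly varying, and one must check that the sums $\sum_j F(j)^{-1}$ and $\sum_j f(c j \log t)$ appearing above behave like their ``pure power'' counterparts up to $t^{\pm\delta}$, which is exactly where the regular-variation hypothesis (rather than a bare polynomial assumption on $f$) is used.
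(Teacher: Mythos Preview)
There is a genuine gap in your lower bound, rooted in a misreading of how degrees grow in this model. You claim that the degree of an early vertex in $G_t(f)$ has expectation of order $F(t)^{1/2}\asymp t^{(1-\gamma)/2}$, by analogy with the Barab\'asi--Albert tree. This is off by a large factor: because an edge-step selects \emph{two} endpoints proportionally to degree, the expected increment of $D_s(v_1)$ at step $s$ is $\frac{2-f(s)}{2s}D_{s-1}(v_1)$, not $\frac{1}{2s}D_{s-1}(v_1)$; since $f(s)\to 0$ this gives $D_t(v_1)$ of order $t$ in the multigraph (this is exactly the content of the function $\phi(t)\sim t\cdot\xi(t)$ with $\xi$ slowly varying). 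Moreover, your ``passing to the simplification costs only polylog'' is false here: most of that degree $\approx t$ comes from parallel edges between the handful of high-degree vertices, and the number of \emph{distinct} neighbours of $v_1$ is only of order $t^{1-\gamma}$, a loss of $t^\gamma$. So the step $F(t)^{1/2}\to t^{(1-\gamma)(1-\delta)}$ cannot be repaired by edge-multiplicity bounds; the mechanism is different. The paper's route is: first use the cited degree lower bound to get $D_t(v_*)\gtrsim t^{1-\delta}$ in the multigraph for some early $v_*$; then, on the window $[t,2t]$, count only the \emph{vertex}-steps that attach to $v_*$ --- each such step produces a genuinely new neighbour in $\widehat G$, occurs with conditional probability $f(s)\cdot D_{s-1}(v_*)/(2s)\gtrsim f(2t)t^{-\delta}$, and there are $t$ trials, so a binomial/Chernoff bound gives $\gtrsim t\cdot t^{-\gamma-\delta}=t^{1-\gamma-\delta}$ distinct neighbours, hence $\Lambda\gtrsim t^{2(1-\gamma)(1-\delta)}$.

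For the upper bound your plan is workable in spirit but more laborious than needed, and the concrete degree bound $d_{\widehat G_t}(v_j)\lesssim t^\delta(F(t)/F(j))^{1/2}$ is not the right shape (for $j=1$ it gives $t^{(1-\gamma)/2}$, contradicting the lower bound just discussed). The paper bypasses per-vertex degree control entirely: it bounds the total number of edges in the simplification, $\mathbb{E}[\mathcal{E}(\widehat G_t)]\le C t^{1-\gamma+\delta}$ (Theorem~\ref{thm:edgesghat}), and then uses the trivial deterministic bound $\Lambda(\widehat G_t)\le \bigl(\sum_v d(v)\bigr)^2=(2\mathcal{E}(\widehat G_t))^2$ together with Markov's inequality. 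A minor side remark: your attempted coupling comparison has the inequality reversed --- Proposition~\ref{p:diammaxdegProp}(a) says $D_{\max}$ is \emph{decreasing}, so $f\le p$ gives $D_{\max}(G_t(f))\ge D_{\max}(G_t(p))$, which is no help for an upper bound.
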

We will postpone their proofs to Section \ref{sec:trianglesandcherries}. For now, we will just focus on how to use them to obtain our two main theorems.
\begin{proof}[Proof of Theorem \ref{thm:clique} (Clique Number)] We will start by the lower bound.

\noindent \underline{Lower bound.} Recall that we already have proven Theorem \ref{t:largecliques} which provides a general lower bound for $\omega_t$ in terms of $\Ed[|G_t|]$ when $f$ is decreasing to zero only. Thus, the lower bound part of Theorem \ref{thm:clique} is a particular case of Theorem \ref{t:largecliques} when $f$ is also regularly varying. Thus, this part of the proof consists essentially of analytical arguments to obtain estimates on~$\Ed[|G_t|]$ itself.

With the above discussion in mind and given $\delta \in (0,1)$, set $\delta'=\delta +\varepsilon$ for some auxiliary $\varepsilon$ to be adjusted later and small enough so $\delta'\in (0,1)$. By Theorem \ref{t:largecliques}, we have that for every~$\delta'\in(0,1)$, there exist positive constants $c_1$ and $c_2$ depending on $\delta'$ and $f$ only such that
\begin{equation}\label{eq:thm1}
\Pd\left( \omega_t \ge \frac{c_2F(t^{(1-\delta')/2})}{\log t}\right) \ge  1-\frac{2}{\log t} - \exp \Lbrace - \frac{c_2F(c_1t^{(1-\delta')/2}\log t)}{4\log t}\Rbrace.    
\end{equation}
Since $f$ is a decreasing to zero, we also have that
\begin{equation}\label{ineq:LBF}
    \Ed[|G_t|] = F(t) = 1 +\sum_{s=2}^tf(s) \ge \int_2^t f(x) {\rm d}x.
\end{equation}
On the other hand, using that $f$ is a regularly varying function at infinity with index $-\gamma$, where $\gamma \in [0,1)$, Theorem \ref{thm:repthm} gives us that $f(t)$ can be written as $f(t)=\frac{\ell(t)}{t^{\gamma}}$, where $\ell$ is a slowly varying function. Thus, by Proposition \ref{prop:factorout}, we have that 
\begin{equation*}
    \lim_{t\to\infty}\frac{\int_2^t f(x) {\rm d}x}{t^{1-\gamma}\ell(t)} = \frac{1}{1-\gamma},
\end{equation*}
which by its turn implies the existence of a  positive and $f$-dependent constant $C$ such that 
\begin{equation}
    \int_2^t f(x) {\rm d}x \ge Ct^{1-\gamma}\ell(t),
\end{equation}
for all $t$. Plugging the above inequality into \eqref{ineq:LBF} yields
\begin{equation}
    \frac{c_2F(t^{(1-\delta')/2})}{\log t} \ge \frac{C\ell(t^{(1-\delta')/2})t^{(1-\delta')(1-\gamma)/2}}{\log t}.
\end{equation}
Notice then that, for any $\varepsilon>0$, Proposition \ref{prop:svsmall}  allows us to replace the slowly varying functions in the RHS of the above inequality by a polynomial factor of $t$, at the expenses of a possibly different $C$, to obtain a nicer lower bound as the one below
$$
\frac{C\ell(t^{(1-\delta')/2})t^{(1-\delta')(1-\gamma')/2}}{\log t} \ge Ct^{(1-\delta' - \varepsilon)(1-\gamma)/2} = Ct^{(1-\delta)(1-\gamma)/2}
$$
Returning to \eqref{eq:thm1}, we have that there exists a positive constant $c_3$ depending on $f,\delta$ and $\varepsilon$ such that 
\begin{equation*}
    \Pd\left( \omega_t \ge Ct^{(1-\delta)(1-\gamma)/2} \right) \ge \Pd\left( \omega_t \ge \frac{c_2F(t^{(1-\delta')/2})}{\log t}\right) \ge 1 -\frac{c_3}{\log t},
\end{equation*}
which is enough to conclude this part of the proof.

The proof will then be completed when we finish the upper bound.

\noindent \underline{Upper bound.}  Recall that by Theorem \ref{thm:triangles}, it follows that for any $\delta>0$ there exist positive constants $C$ and $C''$ depending on $f$ and $\delta$, such that for all $t$
\begin{equation}\label{ineq:thmtriang}
\Pd \left(  \Delta(G_t(f)) \ge Ct^{\frac{3(1-\gamma)(1+\delta)}{2}} \right) \le \frac{C''}{\log(t)}.    
\end{equation}
But notice that if $\omega_t \ge Ct^{\frac{(1-\gamma)(1+\delta)}{2}} $, then $\Delta(G_t(f)) \ge Ct^{\frac{3(1-\gamma)(1+\delta)}{2}}$ for some possibly different constant $C$. This implies the following inclusion of events
$$
\Lbrace \omega_t \ge Ct^{\frac{(1-\gamma)(1+\delta)}{2}} \Rbrace \subset \Lbrace  \Delta(G_t(f)) \ge Ct^{\frac{3(1-\gamma)(1+\delta)}{2}} \Rbrace,
$$
which combined to \eqref{ineq:thmtriang} gives us the desired upper bound w.h.p. and allows us to conclude the proof of Theorem \ref{thm:clique}.
\end{proof}

Now we move to the proof of our second main result of this section.
\begin{proof}[Proof of Theorem \ref{thm:clustering}(Clustering Coefficient)] Notice that the proof is a straightforward consequence of Theorems \ref{thm:triangles} and \ref{thm:cherries} together with the definition of $c^{\rm glob}_t$. 

By Theorem \ref{thm:triangles}, for any fixed $\delta \in (0,1)$, we have that with probability at least $1-C'/\log t$, $\Delta(G_t(f)) \le Ct^{3(1-\gamma)(1+\delta)/2}$. Whereas, by Theorem \ref{thm:cherries}, $\omega_t \ge Ct^{2(1-\gamma)(1-\delta)}$, with probability at least $1-C'/\log t$. These two results combined, gives us that 
$$
c^{\rm glob}_t \le C\times\frac{t^{3(1-\gamma)(1+\delta)/2}}{t^{2(1-\gamma)(1-\delta)}},
$$
with probability at least $1-C'/\log t$, for some constant $C'$ depending on $\delta$ and $f$. This gives the desired upper bound. The lower bound is obtained similarly.
\end{proof}

\subsection{Convergence Theorems and the Inverse Relation}
With the concentration inequalities for the clique number and global clustering given by Theorems \ref{thm:clique} and Theorem \ref{thm:clustering}, respectively, we will show almost surely convergence in the log scale of these statistics properly normalized.
\begin{proof}[Proof of Corollary \ref{cor:cliquesRV}: Convergence of the clique number] We would like to show that 
\begin{equation}
    \lim_{t\to \infty} \frac{\log(\omega_t)}{\log t} = \frac{1 - \gamma}{2}, \, \text{a.s.},
\end{equation}
    when $f$ decreases to zero and is a regularly varying function at infinity with index $-\gamma$, with~$\gamma \in [0,1)$. 

    The proof  will involve passing to a subsequence of $\{\log(\omega_t)/\log t\}_{t\in\mathbb{N}}$ and then using monotonicity of $\omega_t$ and $\log t$. In order to do so, notice that by Theorem \ref{thm:clique}, for every $\delta \in (0, 1)$, there exist positive constants $c_1,c_2$ and $c_3$ depending on $\delta$ only such that
    $$
    \Pd\left( c_1t^{(1-\gamma)(1-\delta)/2} \le \omega_t \le c_2t^{(1-\gamma)(1+\delta)/2}\right) \ge 1 -\frac{c_3}{\log t}.
    $$
    The above result guarantees that for fixed $\delta$ and any $k \in \mathbb{N}$, it follows that 
    \begin{equation}\label{eq:l}
            \Pd \left( \frac{\log(\omega(G_{2^{k^2}}(f)))}{k^2\log 2} \le \frac{(1-\gamma)(1-\delta)}{2} + \frac{\log c_1}{k^2\log 2}  \right) \le \frac{c_3}{k^2\log 2}.
    \end{equation}
    Similarly, we also have that 
    \begin{equation}\label{eq:u}
            \Pd \left( \frac{\log(\omega(G_{2^{k^2}}(f)))}{k^2\log 2} \ge \frac{(1-\gamma)(1+\delta)}{2} + \frac{\log c_2}{k^2\log 2}  \right) \le \frac{c_3}{k^2\log 2}.
    \end{equation}
    Passing to the subsequence $\{\log(\omega(G_{2^{k^2}}(f)))/k^2\log 2\}_{k\in\mathbb{N}}$, we can combine \eqref{eq:l} and \eqref{eq:u} to Borel-Cantelli lemma to obtain that 
    \begin{equation}\label{eq:conv}
        \lim_{k \to \infty} \frac{\log(\omega(G_{2^{k^2}}(f)))}{k^2\log 2} = \frac{1-\gamma}{2}, \text{a.s.},
    \end{equation}
since \eqref{eq:l} and \eqref{eq:u} hold for any $\delta \in (0,1)$  with possibly different constants $c_1,c_2$ and $c_3$.

Finally, using that both $\log t$ and $\omega_t$ are increasing on $t$, we have that for any $t \in [2^{k^2},2^{(k+1)^2}]$, the following inequalities hold
\begin{equation}
    \frac{\omega(G_{2^{k^2}}(f))}{(k+1)^2\log 2} \le \frac{\omega_t}{\log t} \le \frac{\omega(G_{2^{(k+1)^2}}(f))}{k^2\log 2}.
\end{equation}
With the above inequalities, \eqref{eq:conv} and the Squeeze Theorem, one can show that the whole sequence $\{\log(\omega_t)/\log t\}_{t\in\mathbb{N}}$ converges to $\frac{1-\gamma}{2}$ almost surely.
\end{proof}
We now prove similar result for the global clustering coefficient. 
\begin{proof}[Proof of Corollary \ref{cor:clusteringRV}: Convergence of the Clustering Coefficient] The proof of Corollary \ref{cor:clusteringRV} is almost in line with the proof of Corollary \ref{cor:cliquesRV}, for this reason we will just indicate how it follows from Corollary \ref{cor:cliquesRV} proof.

From the definition of $c^{\rm glob}_t$ and properties of the $\log $, it folows that
\begin{equation}\label{eq:logclus}
    \frac{\log c^{\rm glob}_t}{\log t} = \frac{\log 3}{\log t} + \frac{\log \Delta(G_t(f))}{\log t} - \frac{\log \Lambda(G_t(f))}{\log t}.
\end{equation}
    Observe that both statistics $\Delta(G_t(f))$ and $\Lambda(G_t(f))$ are increasing in time. Moreover, Theorems \ref{thm:triangles} and \ref{thm:cherries} provides bounds for the probability of the same order as Theorem \ref{thm:clique}. Thus, the same argument of passing to a subsequence and using Borel-Cantelli holds, which allow one to obtain that
    $$
       \lim_{t\to \infty} \frac{\log \Delta(G_t(f))}{\log t} = \frac{3(1-\gamma)}{2}, \, \text{a.s.,}
    $$
    and that 
    $$
        \lim_{t\to \infty} \frac{\log \Lambda(G_t(f))}{\log t} = 2(1-\gamma), \, \text{a.s.}
    $$
    Plugging the two above equality back into \eqref{eq:logclus} is enough to conclude the proof.
\end{proof}
Finally, the proof of the inverse relation between clustering and clique number.

\begin{proof}[Proof of Corollary \ref{cor:inverserel}] Up to this point, with the results we have proven, the proof of Corollary \ref{cor:inverserel} follows directly from Theorems \ref{thm:clique} and \ref{thm:clustering}. By Theorem \ref{thm:clique}, with probability at least $1- c/\log(t)$ we have that 
$$
    c_1t^{(1-\gamma)(1-\delta)/2} \le \omega_t \le c_2t^{(1-\gamma)(1+\delta)/2}
$$
and by Theorem \ref{thm:clustering}, with probability at least $1-c/\log t$,
$$
c_1t^{-(1-\gamma)(1+\delta)/2} \le c^{\rm glob}_t \le c_2t^{-(1-\gamma)(1-\delta)/2}.
$$
Adjusting the $\delta$'s properly and combining the above results finishes the proof.
\end{proof}

\section{Proof of Theorems \ref{thm:triangles} and \ref{thm:cherries}: Number of Triangles and Cherries}\label{sec:trianglesandcherries}
In this section we will provide bounds on the number of triangles and cherries both counted without multiplicities. 

For the upper bound for both $\Delta(G)$ and $\Lambda(G)$ we will need the result below regarding the number of edges in $\widehat{G}_t$.
\begin{theorem}[Number of edges of $\widehat{G}$]\label{thm:edgesghat}If $f$ decreases to zero and is a regularly varying function at infinity with index $-\gamma$, with~$\gamma\in[0,1)$, then, for every $\delta > 0$ there exists a positive constant $C$ depending on $\gamma$ and $\delta$ only such that
$$
\Ed \left[\mathcal{E}(\widehat{G}_t)\right] \le C t^{1-\gamma + \delta}.
$$
\end{theorem}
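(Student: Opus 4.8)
The plan is to write $\mathcal{E}(\widehat G_t)$ as a sum over unordered pairs of vertices $\{u,v\}$ of the indicator that at least one edge joins them in $G_t(f)$, plus a contribution from vertices connected to themselves only through... no — loops are discarded in $\widehat G_t$, so actually $\mathcal{E}(\widehat G_t) = \sum_{u<v}\mathbb{1}\{u\leftrightarrow v \text{ in } G_t(f)\}$. Taking expectations, $\Ed[\mathcal{E}(\widehat G_t)] = \sum_{u<v}\Pd(u\leftrightarrow v)$. The key is therefore a sharp bound on the probability that a given pair of vertices is joined by \emph{at least one} edge, and the natural quantity to control is $X_t(u,v)$, the number of (multi-)edges between $u$ and $v$ in $G_t(f)$, since $\Pd(u\leftrightarrow v)\le \Ed[X_t(u,v)]$.

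First I would set up a martingale/recursion analysis for $\Ed[X_t(u,v)]$, mirroring the classical degree analysis in preferential attachment. At each edge-step (probability $1-f(t+1)$), a new edge between existing vertices $u$ and $v$ is created with probability $2\,\pi_{G_t}(u)\pi_{G_t}(v) = 2\,\frac{D_t(u)D_t(v)}{(2t)^2}$ roughly (using that the total degree at time $t$ is deterministic, equal to $2t$ up to the initial graph). So $\Ed[X_{t+1}(u,v)\mid \mathcal{F}_t] = X_t(u,v) + (1-f(t+1))\cdot \frac{D_t(u)D_t(v)}{2t^2}$ plus negligible corrections (and one must be slightly careful that a vertex-step on $u$ or $v$ does not create a $u$–$v$ edge, and that $D_t(u), D_t(v)$ themselves change). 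Iterating and using the known sharp upper bounds $\Ed[D_t(v)]\lesssim (t/v)^{1/2}$ for the BA-type degree growth — which can be imported from the coupling of Section~\ref{s:randomtree}, since by Proposition~\ref{p:diammaxdegProp}(a) the maximum degree of $G_t(f)$ is dominated by that of $G_t\equiv G_t(\mathbb{1})$, the $m=1$ BA model — one gets $\Ed[X_t(u,v)] \lesssim \sum_{s} \frac{1}{s^2}\cdot\sqrt{\tfrac{s}{u}}\sqrt{\tfrac{s}{v}}\lesssim \frac{1}{\sqrt{uv}}$, with an extra care that for $s$ below $\max(u,v)$ the vertices do not yet exist. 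Summing over pairs, $\sum_{u<v\le t}\frac{1}{\sqrt{uv}} \asymp (\sum_{u\le t} u^{-1/2})^2 \asymp t$, which is too large; the point is that this counts all pairs whereas only pairs both born early contribute edges with non-negligible probability, and the regularly varying factor $f$ kills the contribution of late vertices. More precisely, a vertex $v$ is born only at a time of order $v/\ell$-ish; rescaling time by the number of vertices, the "index set" of vertices present at time $t$ is $\{1,\dots,F(t)\}$ with $F(t)\asymp t^{1-\gamma}\ell(t)$, and the edge-step probability $1-f(s)\to 1$, so after reparametrizing one expects $\Ed[\mathcal{E}(\widehat G_t)]\asymp$ (number of vertices)$\,=\,F(t)\lesssim t^{1-\gamma+\delta}$, the extra $t^\delta$ absorbing the slowly varying $\ell(t)$ via Proposition~\ref{prop:svsmall}.

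Concretely the cleanest route is: (i) condition on the doubly-labeled tree $\mathcal{T}_t$ and note $\mathcal{E}(\widehat G_t) \le \mathcal{E}(G_t(f)) = $ (number of solid edges surviving) $+$ (number of collapsed vertices contributing a surviving edge) — in fact every edge of $G_t(f)$ corresponds to a solid edge of $\mathcal{T}_t$, so $\mathcal{E}(G_t(f))\le \mathcal{E}(\mathcal{T}_t) = t$ deterministically, which is far too weak; so instead (ii) one must genuinely exploit that $\widehat G_t$ identifies multi-edges. Here the governing estimate is $\Ed[\mathcal{E}(\widehat G_t)] \le \sum_{u<v}\Ed[X_t(u,v)]$ and one breaks the sum at $u,v \le K := t^{1-\gamma+\delta/2}$ versus the rest; for $u$ or $v$ larger than $K$ one uses that such a vertex is present in $G_t(f)$ (i.e. $U_v \le f(v)$) with small probability and that conditionally its degree is small, giving a total contribution $o(t^{1-\gamma+\delta})$; for $u,v\le K$ one simply bounds $\Ed[\mathcal{E}(\widehat G_t)]$ restricted to this range by $\binom{K}{2}$ — no, that gives $K^2 = t^{2-2\gamma+\delta}$, still too large. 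The genuinely necessary input is therefore the $1/\sqrt{uv}$ decay of $\Ed[X_t(u,v)]$ together with the observation that a pair $\{u,v\}$ with $\min(u,v)\ge$ some threshold almost surely has $X_t(u,v)=0$; I expect the \textbf{main obstacle} to be precisely making this degree-based second-moment computation for $X_t(u,v)$ rigorous and uniform in $t$ — handling the dependence between $D_t(u)$ and $D_t(v)$, the time-inhomogeneity from $f$, and the reparametrization from "time" to "number of vertices" — since everything else (summation, absorbing $\ell(t)$ into $t^\delta$ via Proposition~\ref{prop:svsmall}, passing from $X_t$ to the indicator) is routine. Once the bound $\Ed[X_t(u,v)]\lesssim (uv)^{-1/2}\cdot(\text{number of vertices present})\cdot\text{something}$ of the right order is in hand, summing over the $O(F(t)^2)$ relevant pairs and checking the arithmetic yields the claimed $C t^{1-\gamma+\delta}$.
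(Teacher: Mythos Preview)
Your overall strategy---write $\Ed[\mathcal E(\widehat G_t)]=\sum_{i<j}\Pd(e^{ij}_t\ge 1)\le \sum_{i<j}\min\{1,\Ed[e^{ij}_t]\}$ and control $\Ed[e^{ij}_t]$ via a martingale-type recursion driven by the degrees---is exactly the paper's route (its Lemma~\ref{lemma:expeijt} is precisely your ``$X_t(u,v)$'' estimate). But two concrete points in your execution are wrong, and without them the summation does not close.

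First, your use of Proposition~\ref{p:diammaxdegProp}(a) is inverted. The maximum degree is \emph{decreasing} in $f$: since $f\le 1$, one has $D_{\max}(G_t(f))\ge D_{\max}(G_t(1))$, so you cannot import the BA bound $D_t(v)\lesssim (t/v)^{1/2}$. In this model the correct upper bound (Lemma~\ref{lemma:upperbounddeg2}) is $D_t(i)\lesssim \phi(t)/\psi(i)$ with $\phi(t)\sim t$ and $\psi(i)=\phi(F^{-1}(i))\gtrsim i^{1/(1-\gamma)-\delta}$; here $i$ indexes the \emph{order of appearance} of vertices (there are $\sim t^{1-\gamma}$ of them), not time. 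This changes the arithmetic completely: the main term of $\Ed[e^{ij}_t]$ is of order $t/(\psi(i)\psi(j))\approx t\,(ij)^{-1/(1-\gamma)}$, not $(ij)^{-1/2}$.

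Second, even with the right bound the sum $\sum_{i<j\le Ct^{1-\gamma}} t\,(ij)^{-1/(1-\gamma)}$ is of order $t$, not $t^{1-\gamma}$, because for pairs with $ij\le t^{1-\gamma}$ the expected multiplicity $\Ed[e^{ij}_t]$ exceeds $1$ and is wasteful as a bound for an indicator. The paper's fix---which your splits at a fixed vertex-threshold $K$ do not capture---is, for each fixed $i$, to use the trivial bound $1$ for $j\le t^{1-\gamma}/i$ (contributing $\sum_i t^{1-\gamma}/i\lesssim t^{1-\gamma}\log t$) and the expectation bound only for $j>t^{1-\gamma}/i$; on that range $\sum_{j>t^{1-\gamma}/i}j^{-1/(1-\gamma)}\approx (t^{1-\gamma}/i)^{-\gamma/(1-\gamma)}$, and the resulting $i$-sum collapses to $t^{1-\gamma+\delta}\sum_i 1/i$. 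This hyperbola-type split $ij\approx t^{1-\gamma}$ is the step you are missing.
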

For the sake of readability, we will defer the proof of the above result to Section \ref{s:proofedgesghat}. In the next two subsections, we will only focus on how to use Theorem \ref{thm:edgesghat} to obtain bounds for the number of triangles and cherries.

\subsection{Number of Triangles}\label{ss:triangles}In this section we provide bounds on the number of triangles. More specifically, we will prove the following

\begin{proof}[Proof of Theorem \ref{thm:triangles}] We start by the lower bound.

\noindent \underline{\it Lower bound.} It will follow from Theorem \ref{thm:clique}. Recall that if $f$ is a decreasing regularly varying function with index $-\gamma$, we have that for large enough $t$
$$
\frac{F(t)}{\log(t)} \ge Ct^{(1-\gamma)(1-\delta)}
$$
for some constant $C$ depending on $\gamma$, which implies that 
$$
\frac{F(t^{(1-\delta)/2})}{\log(t)} \ge Ct^{(1-\gamma)(1-\delta)/2}.
$$
Thus, by Theorem \ref{thm:clique}, with probability at least $1-3/\log(t)$, there exists a clique in $G_t(f)$ containing at least $Ct^{(1-\gamma)(1-\delta)/2}$ vertices. Consequently, with probability at least $1-3/\log(t)$ we have that
$$
\Delta(G_t(f)) \ge Ct^{\frac{3(1-\gamma)(1-\delta)}{2}},
$$
which gives us the desired lower bound w.h.p.\\
\noindent \underline{\it Upper Bound.} By Markov inequality, we have that 
\begin{equation}\label{ineq:markov}
    \Pd\left(\mathcal{E}(\widehat{G}_t) \ge Ct^{1-\gamma +2\delta} \right) \le \frac{1}{t^\delta}.
\end{equation}
On the event inside the probability above, $\widehat{G}_t$ is a graph on at most $t$ vertices and less than $Ct^{1-\gamma +2\delta}$ edges. 

Now observe that, by Theorem 4 of \cite{R02}, the number of triangles in $\widehat{G}_t$ is deterministically bounded by the number of edges to the power of $3/2$. Combining this to  \eqref{ineq:markov} gives us that 
\begin{equation*}
    \Pd\left(
            \Delta(\widehat{G}_t) \ge \left( Ct^{1-\gamma + 2\delta} \right)^{3/2}
        \right) 
            \le 
                \frac{1}{t^\delta},
\end{equation*}
which is enough to conclude the proof, as $\delta$ is arbitrary.
\end{proof}

\subsection{Number of Cherries}\label{ss:cherries}
In this section we will provide bounds for $\Lambda_t$. Before we start the proof of Theorem \ref{thm:cherries}, we will need some auxiliary results and notation. 

{\bf We now let $i,j,k \in \N$ denote respectively the $i$-th, $j$-th, and $k$-th vertices that are added to $G_t(f)$.}

The lower bound will follow from a lower bound on some vertex's degree, which is proved in \cite{AR22} but stated below for the reader's convenience. 
\begin{lemma}[Theorem 1 of \cite{AR22}]
    \label{thm:deglowerbound}
    Assume that $f$ goes to zero as $t$ goes to infinity. Then, for every $N \in \N$, there exists $C_f >0$ depending on $f$ only such that
    \begin{equation}
    \label{eq:deglowerbound}
        \begin{split}
            \P\left(\forall s\in\N,\exists i \in \{1,2,\cdots,N\}\text{ such that } D_s(i)\geq \frac{\phi(s)}{\phi(N)}\right) &\geq 1- \exp\{-C_f N\},
        \end{split}
    \end{equation}    
    where 
    \begin{equation}
        \label{def:phi}
        \phi(t) = \phi(t, f) := \prod_{s = 1}^{t - 1} \Big( 1 + \frac{1}{s} - \frac{f(s + 1)}{2s} \Big).
    \end{equation}
\end{lemma}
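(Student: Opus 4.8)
The plan is to reduce the statement to a fact about the individual degree processes of the first $N$ vertices: each is, after normalising by $\phi$, a non-negative martingale, and (after a suitable decoupling) these are essentially independent, so that among $N$ of them at least one stays above the prescribed level except on an event of probability $(1-q)^N$.

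\textbf{The per-vertex martingale.} I would first record the local dynamics of a single vertex's degree. If vertex $i$ is present just before step $s$, a vertex-operation (probability $f(s)$) raises $D(i)$ by $1$ with probability proportional to $D_{s-1}(i)$, while an edge-operation (probability $1-f(s)$) raises it by $\mathbbm 1\{u=i\}+\mathbbm 1\{v=i\}$ with both endpoints drawn proportionally to the degrees; with $T_{s-1}$ the current total degree this yields
\[
\Ed\big[D_s(i)\mid\mathcal F_{s-1}\big]=D_{s-1}(i)\Big(1+\frac{2-f(s)}{T_{s-1}}\Big)=D_{s-1}(i)\,\frac{\phi(s)}{\phi(s-1)},
\]
so $M_s(i):=D_s(i)/\phi(s)$ is a non-negative martingale once $i$ is born; it converges a.s.\ to $\xi_i\ge 0$, and $\Ed[\xi_i]$ equals the value of $M$ at the birth time. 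Since $\phi$ is increasing, for the early vertices the level $1/\phi(N)$ of the statement lies at or below this mean, so the single-vertex event
\[
A_i:=\Big\{D_s(i)\ge\tfrac{\phi(s)}{\phi(N)}\ \text{for all }s\Big\}=\Big\{\inf_s M_s(i)\ge\tfrac1{\phi(N)}\Big\}
\]
should have probability $\ge q$ for a constant $q=q(f)>0$. I would get this from a maximal inequality for the non-negative martingale $M_\cdot(i)$, exploiting that its accumulated conditional variance is deterministically bounded, $\sum_s\mathrm{Var}\big(M_s(i)-M_{s-1}(i)\mid\mathcal F_{s-1}\big)\lesssim\sum_s D_s(i)/(s\phi(s)^2)\le\sum_s 2/\phi(s)^2<\infty$, and observing that for $s$ before the birth of the $N$-th vertex the inequality $D_s(i)\ge\phi(s)/\phi(N)$ already holds with $i=1$ because $D_s(1)\ge 2\ge\phi(s)/\phi(N)$.

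\textbf{From one vertex to $N$.} A single $A_i$ only gives a constant; the exponential gain must come from the multiplicity of the first $N$ vertices. I would obtain it from the (conditional) independence of the degree evolutions of distinct vertices --- the content of the classical P\'olya-urn / Athreya--Karlin continuous-time embedding of preferential attachment --- under which $A_1,\dots,A_N$ decouple and
\[
\Pd\Big(\bigcap_{i=1}^N A_i^c\Big)\le(1-q)^N\le e^{-C_f N},
\]
whose complement is the event in the lemma. A variant avoiding the explicit embedding is to chain the bound, showing $\Pd(A_i^c\mid A_1^c,\dots,A_{i-1}^c)\le 1-q$: conditioning on the earlier vertices having fallen behind should, by stochastic domination, only help vertex $i$.

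\textbf{Main obstacle.} The crux is the decoupling step --- justifying that the degree processes of the first $N$ vertices may be treated as independent (or that the monotone dependence in the chained version is harmless) and that $q$ is bounded below uniformly --- together with making the P\'olya-urn embedding compatible with the time-inhomogeneity carried by $f$ (the normaliser $\phi$ depends on the whole initial segment of $f$) and transferring the conclusion back to the discrete, random-birth-time dynamics; in particular, because most of the first $N$ vertices are born well after step $N$, matching the final rate with the stated $e^{-C_f N}$ requires care in counting how many vertices genuinely have the threshold below their mean. The remaining ingredients --- the martingale identity, the summability of the conditional variances, and the split into small and large $s$ --- are routine.
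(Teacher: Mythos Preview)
The paper does not prove this lemma; it is quoted as Theorem~1 of \cite{AR22} and used as a black box in the proof of Theorem~\ref{thm:cherries}, so there is no in-paper argument to compare your sketch against.

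On the sketch itself: the martingale normalisation $D_s(i)/\phi(s)$ and the conditional-variance estimate are correct in spirit (modulo a one-step shift in the index of $\phi$), and you have correctly located the hard step. Two remarks, however. First, the quantifier order in the lemma is $\forall s\,\exists i$, while your events $A_i=\{\forall s:D_s(i)\ge\phi(s)/\phi(N)\}$ pin the vertex first, so $\bigcup_i A_i$ is the strictly stronger $\exists i\,\forall s$ event; you are free to aim for that, but it is worth being aware of the difference. Second, and more seriously, the birth-time issue you list under ``Main obstacle'' is not merely a matter of care for your formulation: since $\tau_i$ is typically of order $F^{-1}(i)$, most indices $i\le N$ satisfy $\tau_i>N$, whence $M_{\tau_i}(i)=1/\phi(\tau_i)<1/\phi(N)$ and $A_i$ already fails at birth. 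Only the roughly $F(N)=o(N)$ vertices with $\tau_i\le N$ can possibly lie in $A_i$, so even with a perfect decoupling your product bound yields at best $e^{-C_f F(N)}$ rather than $e^{-C_f N}$. Reaching the stated rate therefore requires either exploiting the weaker quantifier order (for instance by tracking the aggregate $\sum_{i\le N}D_s(i)$, which at time $\tau_N$ equals the full edge count $2(\tau_N+1)$ deterministically) or importing the specific argument from \cite{AR22}; your outline as it stands does not close this gap.
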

We will need the following technical Lemma on the function $\xi := \phi(t)/t$:
\begin{lemma}[Lemma 2.2 of \cite{AR22}]
    \label{l:xislow}
    Let
    \begin{equation}\label{def:xi}
        \xi(t) := \frac{\phi(t)}{t} 
                = \prod_{s = 1}^{t - 1} \frac{s}{s + 1} \Bigg( 1 + \frac{1}{s} - \frac{f(s + 1)}{2s} \Bigg)
                = \prod_{s = 1}^{t - 1} \Bigg( 1 - \frac{f(s + 1)}{2(s + 1)} \Bigg)
    \end{equation} 
    Then, if $f$ decreases to $0$, $\xi$ is a decreasing slowly varying function uniformly bounded from above by $1$.
\end{lemma}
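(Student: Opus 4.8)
The plan is to verify the three asserted properties in turn; two of them are immediate and the only real content is slow variation. For boundedness and monotonicity: since $f\le 1$, every factor appearing in the product $\xi(t)=\prod_{s=1}^{t-1}\bigl(1-\tfrac{f(s+1)}{2(s+1)}\bigr)$ lies in $[\tfrac34,1]$, so each partial product is strictly positive, the empty product gives $\xi(1)=1$, and $\xi(t+1)=\xi(t)\bigl(1-\tfrac{f(t+1)}{2(t+1)}\bigr)\le\xi(t)$. Hence $\xi$ is positive, non-increasing, and $\xi(t)\le\xi(1)=1$ for all $t$, which handles "decreasing" and "uniformly bounded above by $1$".

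For slow variation I would take logarithms and write $\log\xi(t)=\sum_{s=2}^{t}\log\bigl(1-\tfrac{f(s)}{2s}\bigr)$, splitting each term via $\log(1-x)=-x+r(x)$, where $|r(x)|\le x^2$ for $x\le\tfrac14$. Since $\tfrac{f(s)}{2s}\le\tfrac1{2s}$, the remainders satisfy $|r(f(s)/2s)|\le \tfrac1{4s^2}$, so $\sum_{s\ge2}r(f(s)/2s)$ converges absolutely; thus $\log\xi(t)=-\tfrac12\sum_{s=2}^t\tfrac{f(s)}{s}+R(t)$ with $R(t)\to R_\infty$ at rate $O(1/t)$. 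Fixing $a>0$ and extending $\xi$ to $(0,\infty)$ piecewise-constantly (so that $\xi(x):=\xi(\lfloor x\rfloor)$; any interpolation works since it does not affect the limit), it then suffices to show $\log\xi(\lfloor at\rfloor)-\log\xi(t)\to0$. The $R$-contribution vanishes, and what remains is $\pm\tfrac12$ times the sum of $f(s)/s$ over the integer block between $t$ and $\lfloor at\rfloor$. Bounding $f$ on that block by its value at the smaller endpoint — this is where I use that $f$ is \emph{decreasing} and that $f\to0$ — and using $\sum_{t<s\le at}\tfrac1s\to\log a$, the block sum is at most $f(t)\bigl(\log a+o(1)\bigr)\to0$ (and symmetrically for $0<a<1$, swapping the roles of $t$ and $\lfloor at\rfloor$ and bounding $f$ by $f(\lfloor at\rfloor)\to0$). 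Hence $\xi(at)/\xi(t)\to1$, which is exactly \eqref{def:rv} with index $0$, so $\xi$ is slowly varying.

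I do not expect a genuine obstacle here. The only points needing a little care are: the bookkeeping of the absolutely convergent remainder series $R(t)$ and the rate $R(t)-R_\infty=O(1/t)$; the (harmless) passage between the integer-indexed $\xi$ and a function on $(0,\infty)$ as demanded by the definition of regular variation; and keeping the two cases $a\ge1$ and $0<a<1$ straight. It is worth stressing that slow variation uses only the hypothesis of the lemma, namely $f\downarrow0$, and not regular variation of $f$: the decay $f(t)\to0$ is precisely what forces the block sum $\sum_{t<s\le at}f(s)/s$ to vanish, since $\sum_{t<s\le at}1/s$ stays bounded (it converges to $|\log a|$).
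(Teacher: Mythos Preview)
The paper does not prove this lemma: it is quoted verbatim as Lemma~2.2 of \cite{AR22} and used as a black box, so there is no in-paper proof to compare against. Your argument is correct and self-contained. The monotonicity and boundedness are indeed immediate from the factor bounds, and your log-decomposition for slow variation is the standard route: the key observation that $\sum_{t<s\le at} f(s)/s \le f(\min\{t,\lfloor at\rfloor\})\cdot\sum_{t<s\le at} 1/s \to 0$ uses exactly the hypothesis $f\downarrow 0$ and nothing more, as you correctly emphasize.
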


The above lemma together with Proposition \ref{prop:svsmall} implies that for any $\delta>0$, there exist a positive constant $C$ depending on $\delta$ and $f$, such that for all $t\in \mathbb{N}$
\begin{equation}\label{ineq:phibound}
    Ct^{1-\delta} \le \phi(t) \le t.
\end{equation}
With the above discussion in mind, we can now move to the proof of Theorem \ref{thm:cherries}.
\begin{proof}[Proof of Theorem \ref{thm:cherries}] We will obtain the upper bound first.

\noindent \underline{\it Upper Bound.} For the upper bound, we use Theorem \ref{thm:edgesghat} again. The crucial observation to be made is that, on the event $\{\mathcal{E}(\widehat{G}_t) \le C\log(t)t^{1-\gamma + \delta}\}$, the graph observable $\Lambda(\widehat{G}_t)$ is bounded from above by $\left(C\log^5(t)t^{1-\gamma}\right)^2$. Thus, by \eqref{ineq:markov}, it follows that 
$$
\Pd\left(\Lambda(\widehat{G}_t) > \left(C\log(t)t^{1-\gamma + 
\delta}\right)^2\right) \le \frac{1}{\log(t)},
$$
which is enough to conclude the proof of the upper bound.

\noindent \underline{\it Lower Bound.} The main idea behind the proof of the lower bound is to  show that for any small enough $\delta$, with high probability, $G_t$ has a vertex with at least $Ct^{1-\gamma-\delta}$ neighbors for some constant $C$. Then, $\binom{Ct^{1-\gamma-\delta}}{2}$ is a lower bound for $\Lambda_t$.

In order to do so, for a fixed $t$ and small enough $\delta$,  make $N= 10C_f^{-1}\log t$ on Lemma \ref{thm:deglowerbound} to obtain
\begin{equation}\label{ineq:probdeg}
       \P\left(\exists i \in \{1,2,\cdots,N\}\text{ such that } D_t(i)\geq \frac{\phi(t)}{\phi(N)}\right) \geq 1- t^{-10}.
\end{equation}
Now let $v_*$ be the vertex of smallest index whose degree at time $t$ is larger than $\phi(t)/\phi(N)$. Notice that by \eqref{ineq:phibound} and our choice of $N$ we have that
\begin{equation}\label{ineq:boundphiN}
    \frac{\phi(t)}{\phi(N)} \ge \frac{Ct^{1-\delta/2}}{N} \ge Ct^{1-\delta}.
\end{equation}
For any time $s \in \{t,\dots,2t\}$ denote by $A_s^*$ the following event
\begin{equation}
    A_s^* := \left \lbrace Z_s = 1, v_s \text{ connects to }v_* \right \rbrace.
\end{equation}
In words, $A_s^*$ is the event in which at time $s$ a new vertex is added and connected to the $v_*$. Then, let $Y_t$ be the following random variable
\begin{equation}
    Y_t := \sum_{s=t}^{2t} \mathbb{1}_{A_s^*},
\end{equation}
which counts how many new neighbors $v_*$ gained from time $t$ to $2t$. Also, let $\P_*$ be the following conditional measure
\begin{equation}
    \P_*(\cdot) := \Pd \left( \; \cdot \; \middle | \; \exists i \in \{1,2,\cdots,N\}\text{ such that } D_t(i)\geq \frac{\phi(t)}{\phi(N)} \right).
\end{equation}
Now, using that $f$ is decreasing, that vertices' degree are increasing in time and that $Z_{s}$ is independent of the whole past, we obtain
\begin{equation}\label{ineq:prob}
    \begin{split}
        \P_* \left( A_{s}^* \middle | \mathcal{F}_{s-1}\right) = f(s)\frac{D_{s-1}(v_*)}{2s} \ge f(2t)\frac{Ct^{1-\delta}}{4t} \ge Cf(2t)t^{-\delta}.
    \end{split}
\end{equation}
By the Representation Theorem (Theorem \ref{thm:repthm}), $f(t) =\ell(t)t^{-\gamma}$, where $\ell$ is a slowly varying function. And by Proposition \ref{prop:svsmall} we can then absorb the slowly varying factor of $f$ into a polynomial factor in the last inequality of \eqref{ineq:prob} by increasing the value of $\delta$ and changing the constant $C$. This allows us to obtain
\begin{equation}\label{ineq:probf}
    \P_* \left( A_{s}^* \middle | \mathcal{F}_{s-1}\right) \ge \frac{C}{t^{\gamma + \delta}}.
\end{equation}
Thus, under $\Pd_*$, $Y_t$ dominates a random variable $W$ distributed as ${\rm bin}(t,Ct^{-\gamma -\delta})$. This fact combined with Chernoff bounds for $W$ yields
\begin{equation}\label{ineq:yt}
    \Pd_* \left( Y_t \le \frac{Ct^{1-\gamma -\delta}}{2} \right) \le \Pd_* \left( W \le \frac{Ct^{1-\gamma -\delta}}{2} \right) \le \exp \{-Ct^{1-\gamma -\delta}/8\}.
\end{equation}
The above inequality tells us that conditioned on the event that at time $t$ there exists a vertex whose degree is at least $t^{1-\delta}$, then, with probability at least $1-\exp \{-Ct^{1-\gamma -\delta}/8\}$, at time $2t$ this vertex has at least~$Ct^{1-\gamma-\delta}$ neighbors. Then, combining \eqref{ineq:yt} with \eqref{ineq:probdeg} yields
\begin{equation}
    \Pd\left( \nexists v_* \in G_{2t}, v_* \text{ has at least }Ct^{1-\gamma-\delta}\right) \le \frac{2}{t^{10}},
\end{equation}
which allows us to conclude that with probability at least $1-2t^{-10}$,
$$
\Lambda_{2t} \ge \binom{Ct^{1-\gamma-\delta}}{2}
$$
and consequently to finish the proof of the theorem.

\end{proof}

\section{Proof of Theorem \ref{thm:edgesghat}: Number of edges in $\widehat{G}_t$}\label{s:proofedgesghat}

This section is devoted to prove Theorem \ref{thm:edgesghat}, which is the last result left to be proven. As usual, we will need a couple of technical estimates involving the time of appearance of vertices, a tight upper bound for their degrees together with estimates on the average number of edges between two vertices. These are all technical estimates and their proof might be quite technical. In a first reading, the reader might find instructive start from Section \ref{ss:proofedgeshat} which contains the proof of Theorem \ref{thm:edgesghat}.

It is also important to highlight that throughout this entire section, $f$ will be  a regularly varying function with index of variation at infinity $\gamma \in [0, 1)$, which decreases to $0$ monotonically.

Throughout this section we will make need new notation. Together with the functions $\phi$ and $\xi$ defined in (\ref{def:phi}, \ref{def:xi}), we will need the function~$F^{-1}:\R\to\R$ defined by
\begin{equation}
    \label{eq:Finvdef}
    F(r):=\sum_{1\leq s \leq r} f(s);\quad\quad F^{-1}(r):=\inf_{s \in \R}\{     F(s) \geq r \}.
\end{equation}
Since we are assuming~$f(s)$ to be strictly positive for all~$s$, and since~$\|f\|_\infty \leq 1$, we have that
\begin{equation}
\label{eq:Finvdef2}
r\leq F(F^{-1}(r))\leq r+1,\quad\text{ and } \quad r\leq F^{-1}(F(r))\leq r+1.
\end{equation}
We define $\tau_i$ to be the first time such that the cardinality of the vertex set equals $i$, that is
\begin{equation}
    \tau_i = \inf_{t \in \N} \big\{ t; \, |G_{t}(f)| = i \big\},
\end{equation}
and in order to make notation less cumbersome, we further define
\begin{equation}\label{def:psi}
    \psi(i) = \phi\left( F^{-1}(i)\right).
\end{equation}

\subsection{Time of Appearance of Vertices}

\begin{lemma}[Time of Appearance of Vertices]\label{lemma:tauibound}Assume that $f$ decreases to zero and it is a regularly varying function with index of regular variation $-\gamma$,  for $\gamma \in [0,1)$. Then, there exists a constant~$C_f>0$ such that for~$i\in\N$ and every~$\delta>0$
\begin{equation}
\label{eq:tauiconc}
\P\left((1-\delta)F^{-1}(i)\le \tau_i    \leq  (1+\delta)F^{-1}(i)     \right) \ge 1- C_f^{-1}\exp\left\{   -C_f  (1-\gamma)\delta  i      \right\}.
\end{equation}
\end{lemma}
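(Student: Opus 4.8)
The plan is to realize $\tau_i$ as a sum of independent geometric-type waiting times and apply a Chernoff bound. Recall that at each step $s$ the process adds a new vertex with probability $f(s)$, independently across steps (these are the $Z_s \sim {\rm Ber}(f(s))$). Hence $|G_t(f)| = 1 + \sum_{s=2}^t Z_s$, and the event $\{\tau_i \le t\}$ coincides with $\{\sum_{s=2}^t Z_s \ge i-1\}$. So concentration of $\tau_i$ around $F^{-1}(i)$ is exactly concentration of the partial sums $\sum_{s=2}^t Z_s$ around their mean $F(t) = \sum_{s=2}^t f(s)$, evaluated near $t = F^{-1}(i)$. First I would write, for the upper tail,
\[
\P\left(\tau_i > (1+\delta)F^{-1}(i)\right) = \P\left(\sum_{s=2}^{\lfloor (1+\delta)F^{-1}(i)\rfloor} Z_s < i-1\right),
\]
and for the lower tail,
\[
\P\left(\tau_i < (1-\delta)F^{-1}(i)\right) = \P\left(\sum_{s=2}^{\lceil (1-\delta)F^{-1}(i)\rceil} Z_s \ge i-1\right).
\]

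Next I would estimate the means $F\big((1\pm\delta)F^{-1}(i)\big)$ in terms of $i$. Using that $f$ is regularly varying with index $-\gamma$, Theorem~\ref{thm:repthm} gives $f(t) = \ell(t)t^{-\gamma}$ with $\ell$ slowly varying, and Proposition~\ref{prop:factorout} yields $F(t) \sim \frac{\ell(t)t^{1-\gamma}}{1-\gamma}$. Consequently $F(\lambda t)/F(t) \to \lambda^{1-\gamma}$ for each fixed $\lambda>0$. Applying this with $t = F^{-1}(i)$ (so $F(t) \approx i$ by \eqref{eq:Finvdef2}) gives
\[
F\big((1+\delta)F^{-1}(i)\big) \approx (1+\delta)^{1-\gamma} i \ge \big(1 + c(1-\gamma)\delta\big) i,
\qquad
F\big((1-\delta)F^{-1}(i)\big) \approx (1-\delta)^{1-\gamma} i \le \big(1 - c(1-\gamma)\delta\big) i,
\]
for a universal constant $c>0$ and $\delta$ in a bounded range (for $\delta$ large one argues separately, or absorbs it into the constant $C_f$). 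Thus in both tail events the partial sum of Bernoullis deviates from its mean by a multiplicative factor of order $(1-\gamma)\delta$.

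Then I would apply the standard Chernoff bound for sums of independent Bernoulli random variables: if $S$ has mean $\mu$, then $\P(S \le (1-\eta)\mu) \le \exp(-\eta^2\mu/2)$ and $\P(S \ge (1+\eta)\mu) \le \exp(-\eta^2\mu/3)$. Here $\eta$ is of order $(1-\gamma)\delta$ and $\mu$ is of order $i$, so each tail is bounded by $\exp\{-C(1-\gamma)^2\delta^2 i\}$. Since the statement only claims a bound of the form $C_f^{-1}\exp\{-C_f(1-\gamma)\delta i\}$, which is weaker (linear rather than quadratic in $(1-\gamma)\delta$, and we may always shrink the exponent), this is more than enough after adjusting constants; the constant $C_f$ absorbs the slowly-varying corrections and the $o(1)$ errors from Proposition~\ref{prop:factorout}, which is why it is allowed to depend on $f$. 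A union bound over the two tails completes the proof.

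The main obstacle is the non-asymptotic bookkeeping: Proposition~\ref{prop:factorout} and the regular-variation estimates are statements ``for $t$ large enough,'' whereas the lemma asserts a bound valid for all $i \in \N$ and all $\delta > 0$. Handling small $i$ and large $\delta$ requires either a crude deterministic bound (e.g. $\tau_i \ge i$ always, and $F^{-1}(i) \le$ something explicit since $\|f\|_\infty \le 1$) together with a suitable choice of $C_f$ so that the claimed exponential bound is trivially true in those ranges, or a careful uniform version of the slowly-varying comparison via Potter's bounds. Making the constant $C_f$ genuinely uniform — and checking that the slowly varying factor $\ell(F^{-1}(i))$ does not spoil the comparison $F((1\pm\delta)F^{-1}(i)) \asymp (1\pm\delta)^{1-\gamma} i$ uniformly in $i$ — is the delicate point; everything else is a routine Chernoff computation.
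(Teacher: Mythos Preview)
Your approach is essentially identical to the paper's: translate $\{\tau_i \gtrless (1\pm\delta)F^{-1}(i)\}$ into a deviation event for the Bernoulli sum $|G_t(f)|=1+\sum Z_s$, use regular variation (via Theorem~\ref{thm:repthm} and Proposition~\ref{prop:factorout}) to get $F((1\pm\delta)F^{-1}(i))\sim(1\pm\delta)^{1-\gamma}i$, and apply Chernoff. Your discussion of the small-$i$ / large-$\delta$ bookkeeping is, if anything, more careful than the paper's, which simply says ``by adjusting the constant.''

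There is, however, one genuine slip. You assert that the quadratic bound $\exp\{-C(1-\gamma)^2\delta^2 i\}$ is \emph{stronger} than the linear bound $C_f^{-1}\exp\{-C_f(1-\gamma)\delta i\}$ claimed in the lemma, and that one ``may always shrink the exponent.'' This is backwards: when $(1-\gamma)\delta$ is small, $(1-\gamma)^2\delta^2<(1-\gamma)\delta$, so the quadratic exponent is smaller and the resulting probability bound is \emph{weaker}. In fact no single $C_f$ makes $\exp\{-Ca^2 i\}\le C_f^{-1}\exp\{-C_f a i\}$ hold for all $a>0$ and all $i$ (fix $a$ small and send $i\to\infty$). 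So the standard multiplicative Chernoff bound you invoke does not deliver the lemma exactly as stated.

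That said, the paper's own Chernoff step records the bound as $\exp\{-\tfrac{1}{3}\eta\mu\}$ with $\eta\asymp(1-\gamma)\delta$, i.e.\ the linear form, whereas the textbook inequality gives $\exp\{-c\,\eta^2\mu\}$ as you have; so the discrepancy is arguably in the statement rather than in your argument. In every place the lemma is actually consumed downstream the parameter $\delta$ is ultimately of order~$1/2$, so the quadratic-versus-linear distinction is immaterial for the rest of the paper. If you want the statement verbatim, either restate the conclusion with $\delta^2$ in the exponent or appeal to a sharper large-deviation inequality; otherwise your proof is complete.
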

\begin{proof}
We will prove that there exist a constant $C'_f$ such that 
\begin{equation}\label{eq:taulower}
	\P\left( \tau(i)     \leq  (1-\delta)F^{-1}(i)     \right) \le (C'_f)^{-1}\exp\left\{   -C'_f  (1-\gamma)\delta  i      \right\}.
\end{equation}
The upper bound for the probability of the event $\{\tau(i) \ge (1+\delta)F^{-1}(i)\}$ will follow from the exactly same argument.

By Theorem \ref{thm:repthm}, we know that~$f(t)=\ell(t)t^{-\gamma}$ with~$\ell$ being a slowly varying function. Proposition \ref{prop:factorout} implies that as~$r\to\infty$, using the monotonicity of~$f$ to approximate the integral by the sum,
\begin{equation}
\label{eq:cotaF2}
\begin{split}
F((1-\delta)r)
 &=\sum_{s=1}^{\lfloor  (1-\delta) r  \rfloor}      f(s)   \\  
  & =  \sum_{s=1}^{\lfloor (1-\delta) r   \rfloor}      \ell(s)s^{-\gamma}   \sim \frac{1}{1-\gamma}\ell((1-\delta) r)(1-\delta)^{1-\gamma}r^{1-\gamma} \\
  & \sim (1-\delta)^{1-\gamma} F(r),
\end{split}
\end{equation}
where we also used the fact that~$\ell$ is slowly varying to obtain~$\ell((1-\delta)r)\sim\ell(r)$. Letting $i_* = (1-\delta)F^{-1}(i)$
and recalling that~$|G_r(f)|$ is a sum of independent random variables, we may use the above equation, \eqref{eq:Finvdef2}, and an elementary Chernoff bound to obtain 
\begin{equation}
\label{eq:tauiconc2}
\begin{split}
\lefteqn{\Pd\left(\tau(i)     \leq  (1-\delta)F^{-1}(i)     \right)}\quad\quad
\\
&=
\Pd\left( |G_{i_*}(f)|  \geq i     \right)
\\
&=
\Pd\left( |G_{i_*}(f)|  \geq \left(1+\left(i \left( F\left((1-\delta)F^{-1}(i)\right)\right)^{-1}-1\right)\right)\Ed\left[  |G_{i_*}(f)|       \right]    \right)
\\
&\leq 
\exp\left\{   -\frac{1}{3}   \left(i \left( F\left((1-\delta)F^{-1}(i)\right)\right)^{-1}-1\right)  F\left( (1-\delta)F^{-1}(i)\right)          \right\}
\\
&\leq 
\exp\left\{   -\frac{1}{3}  \left(   (1-\delta)^{-(1-\gamma)}      -1\right)(1-\delta)^{1-\gamma} \cdot i      \right\}
\\
&\leq 
\exp\left\{   -C(1-\gamma)\delta i      \right\},
\end{split}
\end{equation}
for sufficiently large~$i$. By adjusting the constant we can then show~\eqref{eq:taulower} for every~$i\geq 1$, finishing the proof.

\end{proof}

We will also need a suitable upper bound for the degree of the $i$-th vertex:

\begin{lemma}[Upper bound on vertices degrees]\label{lemma:upperbounddeg2}
	Assume that $f$ decreases to zero and it is a regularly varying function with index of regular variation $-\gamma$,  for $\gamma \in [0,1)$. Then, there exists a constant~$C_f>0$ such that, for every~$\alpha>1$,
	\begin{equation*}
	\label{eq:upperbounddeg2}
	\begin{split}
	\P\left(       \exists s\in\N \text{ such that } D_s(i) \geq   \alpha\frac{\phi(s)}{\phitil(i)}                     \right) & \leq C_f^{-1}\left(\exp\left\{   -C_f \frac{\alpha-1}{\alpha} i    \right\}     +  \exp\left\{   -C_f \cdot  \alpha    \right\} \right)    . 
	\end{split}
	\end{equation*}
\end{lemma}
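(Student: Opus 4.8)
The strategy is the standard martingale argument for degree growth in preferential attachment, adapted to the time-dependent edge-step function. Fix the $i$-th vertex and condition on its time of appearance $\tau_i$. For $s \ge \tau_i$, the degree process $D_s(i)$ increases by one at step $s+1$ precisely when the new connection (either the endpoint of the new vertex's edge, or one of the two endpoints chosen in an edge-step) lands on vertex $i$; since every half-edge is chosen with probability proportional to degree among $2s$ half-edges, one has $\Ed[D_{s+1}(i) - D_s(i) \mid \mathcal{F}_s] = c(s)\, D_s(i)/(2s)$ where $c(s)\in\{1,2\}$ accounts for the one-or-two half-edges added, more precisely $\Ed[D_{s+1}(i)\mid\mathcal{F}_s] = D_s(i)\bigl(1 + \tfrac{1}{s} - \tfrac{f(s+1)}{2s}\bigr)$ (a vertex-step adds one half-edge to the ``target'' with prob.\ $f(s+1)$, an edge-step adds two half-edges, each landing on $i$ with prob.\ $D_s(i)/2s$; combining gives exactly the factor appearing in the definition \eqref{def:phi} of $\phi$). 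Therefore
\[
M_s(i) := \frac{D_s(i)}{\phi(s)}
\]
is a non-negative martingale for $s\ge\tau_i$ (this is why $\phi$ was defined the way it is). The plan is then: (1) control the ``initial value'' $M_{\tau_i}(i) = D_{\tau_i}(i)/\phi(\tau_i)$, using the concentration of $\tau_i$ around $F^{-1}(i)$ from Lemma~\ref{lemma:tauibound} together with the fact that $\phi$ is regularly varying, so that $\phi(\tau_i) \approx \phi(F^{-1}(i)) = \psi(i)$; and (2) apply a maximal inequality to the martingale $M_s(i)$ to bound $\Pd(\exists s:\, D_s(i) \ge \alpha\,\phi(s)/\psi(i))$.

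For step (2), the cleanest route is to use that $\{M_s(i)\}_{s\ge\tau_i}$ is a non-negative martingale and invoke a one-sided maximal inequality after an exponential change of measure: for $\lambda>0$, the process $\exp\{\lambda M_s(i)\}$ is a submartingale, but $M_s$ is not bounded-increment, so instead I would run the argument on the \emph{logarithm}. Write $D_s(i) = D_{\tau_i}(i)\prod_{r=\tau_i}^{s-1}(1 + X_r)$ where $X_r := \bigl(D_{r+1}(i)-D_r(i)\bigr)/D_r(i)$ are increments with conditional mean $\tfrac1r - \tfrac{f(r+1)}{2r}$, and bound the moment generating function of each increment (the increment is at most $2/D_r(i)\le 2$, and can be coupled with a sum of at most two Bernoulli-type variables). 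Comparing $\sum \log(1+X_r)$ to $\log\phi(s)/\phi(\tau_i) = \sum \log(1+\tfrac1r-\tfrac{f(r+1)}{2r})$ and applying a Chernoff/Azuma-type bound to the centered sum yields a bound of the form $\exp\{-C\alpha\}$ on the probability that the running maximum of $M_s(i)/M_{\tau_i}(i)$ ever exceeds $\alpha$ — this is the source of the second term $\exp\{-C_f\alpha\}$. Alternatively, and perhaps more simply, since $M_s$ is a non-negative martingale, Doob's maximal inequality gives $\Pd(\sup_s M_s(i) \ge a) \le \Ed[M_{\tau_i}(i)]/a$; but this only gives polynomial decay in $\alpha$, so to get the exponential term one does need the exponential-supermartingale refinement. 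I would set up $N_s := \exp\{\theta(\log D_s(i) - \log\phi(s))\}$ for a well-chosen $\theta$ and verify it is a supermartingale using the increment MGF bound, then Ville's/Doob's inequality for non-negative supermartingales gives $\Pd(\sup_s N_s \ge \text{something}) \le N_{\tau_i}$, producing the $\exp\{-C_f\alpha\}$ decay.

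For step (1), the event that $D_{\tau_i}(i)/\psi(i)$ is not $O(1)$ must be handled: $D_{\tau_i}(i)$ is small (order $1$, since vertex $i$ is freshly born, possibly plus a few edge-steps landing on it immediately, which is unlikely), so the real content is that $\phi(\tau_i)$ is comparable to $\psi(i) = \phi(F^{-1}(i))$. By Lemma~\ref{lemma:tauibound}, $\tau_i \in [(1-\delta)F^{-1}(i),(1+\delta)F^{-1}(i)]$ with probability $1 - C_f^{-1}\exp\{-C_f(1-\gamma)\delta i\}$, and since $\phi$ is regularly varying (its index is $1-\gamma/2 \cdot 0$... more precisely $\phi(t)=t\xi(t)$ with $\xi$ slowly varying by Lemma~\ref{l:xislow}), on this event $\phi(\tau_i) = (1+o(1))\psi(i)$, in particular $\phi(\tau_i) \ge \tfrac12\psi(i)$ for $i$ large. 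The bad event where $\tau_i$ is atypical contributes the first term $\exp\{-C_f\frac{\alpha-1}{\alpha}i\}$ — here one chooses $\delta = \delta(\alpha)$ of order $\frac{\alpha-1}{\alpha}$ (so that when $\alpha$ is close to $1$ one only needs $\tau_i$ to be in a wide window, and the extra slack in $\alpha$ is spent absorbing the $\phi(\tau_i)$ vs $\psi(i)$ discrepancy). Then $\{\exists s: D_s(i)\ge \alpha\phi(s)/\psi(i)\} \subset \{\tau_i \text{ atypical}\} \cup \{\sup_{s\ge\tau_i} M_s(i)/M_{\tau_i}(i) \ge \text{const}\cdot\alpha\}$, and the two pieces give the two claimed terms.

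\textbf{Main obstacle.} The delicate point is the interplay in step (1) between the slack $\frac{\alpha-1}{\alpha}$ and the concentration window of $\tau_i$: one must choose the window parameter $\delta$ in Lemma~\ref{lemma:tauibound} as a function of $\alpha$ so that the regular-variation estimate $\phi((1\pm\delta)F^{-1}(i)) = (1+o(1))\psi(i)$ loses at most a factor that can be absorbed into the factor $\alpha$, while keeping the exponent $(1-\gamma)\delta i$ of the form $C_f\frac{\alpha-1}{\alpha}i$. Getting uniform-in-$\alpha$ control of the slowly varying function $\xi$ near the argument $F^{-1}(i)$ (rather than just asymptotically) is the technically fussiest part and is where one invokes the quantitative bounds on slowly varying functions (Proposition~\ref{prop:svsmall}); the martingale estimate in step (2), by contrast, is routine once the increment MGF is bounded.
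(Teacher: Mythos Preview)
Your plan is correct and structurally identical to the paper's: decompose into the event that $\tau_i$ is atypically small and, on the good event, the event that the degree martingale $D_s(i)/\phi(s)$ ever exceeds a constant multiple of its initial value; then choose the concentration window $\delta$ proportional to $(\alpha-1)/\alpha$ so that Lemma~\ref{lemma:tauibound} produces the first exponential term and the martingale maximal bound produces the second.

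Two remarks on execution. First, the paper does not carry out your step (2) in-house; it simply invokes a conditional bound from an external reference (a companion paper) stating that $\Pd(\exists s:\,D_s(i)\ge \alpha\phi(s)/\phi(n)\mid\tau_i=n)\le C_f^{-1}e^{-C_f\alpha}$. Your exponential-supermartingale sketch is exactly how one proves such a statement, so there is no divergence of method, only of presentation. Second, your ``main obstacle'' is overstated: the comparison $\phi(\tau_i)$ versus $\psi(i)$ does \emph{not} require any regular-variation machinery. Directly from the product formula \eqref{def:phi} one has, since each factor is at most $(s+1)/s$,
\[
\frac{\phi(r)}{\phi((1-\delta)r)}=\prod_{s=(1-\delta)r}^{r-1}\Bigl(1+\tfrac{1}{s}-\tfrac{f(s+1)}{2s}\Bigr)\le \prod_{s=(1-\delta)r}^{r-1}\frac{s+1}{s}\le \frac{1}{1-\delta},
\]
which is the elementary bound the paper uses. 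With $\delta=(\alpha-1)/(2\alpha)$ this gives $\alpha\cdot\psi(i)^{-1}\ge \alpha(1-\delta)\cdot\phi((1-\delta)F^{-1}(i))^{-1}$ with $\alpha(1-\delta)>1$, and the union bound finishes the proof immediately. No appeal to Proposition~\ref{prop:svsmall} or to uniform slow-variation estimates is needed.
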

\begin{proof}
Lemma $5$ of \cite{alvesribeirovalesin23} implies
\begin{equation}
    \label{e:valesin_deg_upperbound}
    \begin{split}
        \P\left(       \exists s\in\N \text{ such that } D_s(i) \geq   \alpha\frac{\phi(s)}{\phi(n)} \Bigg| \tau_i = n                    \right) 
            & \leq 
                C_f^{-1}  \exp\left\{   -C_f \cdot  \alpha    \right\} .
    \end{split}
\end{equation}
We note that, for~$r>0$ and~$\delta\in(0,1)$,
\begin{equation}
\label{eq:phiFphiFsobre2}
\frac{\phi\left(r\right)}{\phi\left((1-\delta)r\right)}=\prod_{s=(1-\delta)r}^{r-1}
\left(     1+\frac{1}{s}              -\frac{f(s+1)}{2s}       \right) \leq 
\prod_{s=(1-\delta)r}^{r-1}
  \frac{s+1}{s}       \leq \frac{1}{1-\delta}.       
\end{equation}
Let~$\delta = (\alpha-1)/(2\alpha)$, so that~$\alpha(1-\delta)>1$ and~$\delta\in(0,1)$. We obtain, by the above equation, monotonicity, and the union bound,
\begin{equation}
\nonumber
\label{eq:upperbounddeg3}
\begin{split}
\lefteqn{\P\left(       \exists s\in\N \text{ such that } d_s(i) \geq   \alpha\frac{\phi(s)}{\phi\left(F^{-1}(i)\right)}              \right) \leq  
\P\left(\tau(i)     \leq \left(1 -\delta\right) F^{-1}(i)     \right)}
\\
&\quad +
\P\left(       \exists s\in\N \text{ such that } d_s(i) \geq   \alpha(1-\delta)\frac{\phi(s)}{\phi\left((1-\delta)F^{-1}(i)\right)}  \middle |  \tau(i)     = (1-\delta)F^{-1}(i)           \right),
\end{split}
\end{equation}
The result then follows from Equation \ref{e:valesin_deg_upperbound} and Lemma~\ref{lemma:tauibound}.
\end{proof}

\subsection{Average Number of Edges between two vertices}

Our next step is to prove the following lemma, concerning the number of edges between the $i$-th and $j$-th vertex in $G_s$, which we denote $e^{ij}_s$.

\begin{lemma}[Average Number of Edges between two vertices]\label{lemma:expeijt} There exist positive constants $C_1$ and $C_2$ depending on $f$ only, such that, for all $1\le s \le t$ and $j>i>\log^2(t)$, 
    $$
        \Ed\left[e_s^{ij}\right]\le C_1s^2e^{-C_2\log^2(t)} + \frac{C_3\log^2(t)}{2\psi(i)} + \frac{\log^4(t)s}{\phitil(i)\phitil(j)}.
    $$
\end{lemma}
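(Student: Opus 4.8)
The plan is to set up a martingale for the number of edges between the $i$-th and $j$-th vertex and extract the stated bound from its expected growth rate together with the degree estimates already established in Lemma~\ref{lemma:upperbounddeg2}. Write $e^{ij}_s$ for the number of edges between $v_i$ and $v_j$ in $G_s$ (here $i,j$ index the vertices in order of appearance, so $v_i$ appears at time $\tau_i$ and $v_j$ at time $\tau_j$). First I would observe that, conditionally on $\mathcal{F}_{s-1}$ and on the event that both vertices are already present, a new edge is placed between $v_i$ and $v_j$ at step $s$ only during an edge-step (probability $1-f(s)$), and then with probability proportional to the product of the two normalized degrees; thus
\begin{equation*}
\Ed\left[ e^{ij}_s - e^{ij}_{s-1} \,\middle|\, \mathcal{F}_{s-1} \right] \le (1-f(s))\cdot \frac{2 D_{s-1}(i) D_{s-1}(j)}{(2s)^2} \le \frac{D_{s-1}(i) D_{s-1}(j)}{2s^2}.
\end{equation*}
Summing over $s$ from $1$ to $t$ (and being careful that the terms vanish before $\tau_i\vee\tau_j$), this gives $\Ed[e^{ij}_t] \le \frac{1}{2}\sum_{s=1}^{t} s^{-2}\,\Ed\left[ D_{s-1}(i) D_{s-1}(j) \right]$, and the whole problem reduces to controlling $\Ed[D_{s-1}(i)D_{s-1}(j)]$.

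The key step is then to plug in the high-probability degree bounds. By Lemma~\ref{lemma:upperbounddeg2} with a suitable choice of $\alpha$ (say $\alpha = c\log^2 t$ for a small constant, or $\alpha$ just large enough that $\exp\{-C_f\alpha\}$ beats the polynomial loss), we have that with probability at least $1 - C_f^{-1}(\exp\{-C_f \tfrac{\alpha-1}{\alpha} i\} + \exp\{-C_f\alpha\})$, simultaneously for all $s$,
\begin{equation*}
D_s(i) \le \alpha \frac{\phi(s)}{\psi(i)} \le \alpha \frac{s}{\psi(i)}, \qquad D_s(j) \le \alpha \frac{\phi(s)}{\psi(j)} \le \alpha \frac{s}{\psi(j)},
\end{equation*}
using $\phi(s)\le s$ from \eqref{ineq:phibound}/Lemma~\ref{l:xislow} and the definition $\psi(i)=\phi(F^{-1}(i))$. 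Since $i,j > \log^2 t$, choosing $\alpha$ of order $\log^2 t$ makes the failure probability of these bounds at most something like $C_f^{-1}\exp\{-C_f\log^2 t\}$ (the first exponential uses $i>\log^2 t$; the second uses $\alpha \asymp \log^2 t$). On the complement event I would use the deterministic bounds $D_s(i), D_s(j) \le 2s$ (each step adds at most $2$ to the total degree, so certainly to any single degree over $s$ steps, starting from bounded initial data), so that the contribution of the bad event to $\Ed[D_{s-1}(i)D_{s-1}(j)]$ is at most $4s^2 \cdot C_f^{-1}\exp\{-C_f\log^2 t\}$, which after summing $s^{-2}\cdot 4s^2 = 4$ over $s\le t$ yields the first term $C_1 s^2 e^{-C_2\log^2 t}$ (written loosely; one can keep an $s^2$ or a $t$ there depending on how crudely one bounds). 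On the good event, the contribution to $\tfrac12\sum_{s} s^{-2}\Ed[D_{s-1}(i)D_{s-1}(j)]$ is at most $\tfrac12\sum_{s=1}^t s^{-2} \cdot \alpha^2 \frac{s^2}{\psi(i)\psi(j)} = \frac{\alpha^2 t}{2\psi(i)\psi(j)}$, which with $\alpha \asymp \log^2 t$ gives (after squaring, $\alpha^2 \asymp \log^4 t$) the term $\frac{\log^4(t) s}{\psi(i)\psi(j)}$ claimed.

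The one remaining term $\frac{C_3\log^2(t)}{2\psi(i)}$ comes from handling the diagonal-type contribution where one cannot use independence of the two degrees — namely the steps $s$ at which $v_j$ is being \emph{created} and simultaneously attached, or more precisely the edge between $v_i$ and $v_j$ that is added by the very vertex-step creating $v_j$ if $\ell(v_j)$ or the attachment points to $v_i$; and, more importantly, the regime $s$ close to $\tau_j$ where the bound on $D_{s-1}(j)$ degrades (one only knows $D_{s-1}(j)\le \alpha\phi(s)/\psi(j)$ once $\phi(s)/\psi(j)$ is meaningfully large, i.e. $s \gtrsim F^{-1}(j)$, while for $\tau_j \le s$ not too much larger than $\tau_j$ the degree of $v_j$ is small but $v_i$'s degree is already $\asymp \phi(s)/\psi(i)$). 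Splitting the sum at $s \asymp F^{-1}(j)$ (equivalently at $\tau_j$, using Lemma~\ref{lemma:tauibound} to control $\tau_j$ by $F^{-1}(j)$ up to constants with the stated exponential probability) and on the lower range bounding $D_{s-1}(j)$ by its maximum before $v_j$ grows — using that up to time $\asymp F^{-1}(j)$ the vertex $v_j$ has accumulated at most $O(\log^2 t)$ edges with high probability (again Lemma~\ref{lemma:upperbounddeg2}, or a direct geometric-type estimate on how fast a young vertex gains degree) — contributes $\tfrac12\sum_{s\lesssim F^{-1}(j)} s^{-2}\cdot \alpha\frac{s}{\psi(i)}\cdot O(\log^2 t) \lesssim \frac{\log^2 t}{\psi(i)}\log(F^{-1}(j)) $, which one absorbs into $\frac{C_3\log^2(t)}{2\psi(i)}$ after adjusting constants and possibly an extra log factor hidden in $C_3$. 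I expect this last bookkeeping — cleanly separating the ``both vertices mature'' regime from the ``$v_j$ still young'' regime and choosing $\alpha$ to simultaneously make all the failure probabilities smaller than $s^{-2}$-summable against the deterministic $s^2$ bound — to be the main obstacle; everything else is the standard martingale-plus-degree-concentration computation familiar from the preferential attachment literature.
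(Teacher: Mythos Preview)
Your approach is essentially the paper's: bound the increment $\Ed[\Delta e^{ij}_r\mid\mathcal{F}_r]$, apply the degree bound from Lemma~\ref{lemma:upperbounddeg2} with $\alpha=\log^2 t$ (this is exactly the paper's choice), split into good and bad events, and sum. The paper organizes the computation by conditioning explicitly on $\{\tau_j=t_j\}$ and summing over $t_j$, using Lemma~\ref{lemma:tauibound} to kill the contribution from $t_j$ outside $[\tfrac12 F^{-1}(j),\tfrac32 F^{-1}(j)]$; this is what your sketch gestures at with ``using Lemma~\ref{lemma:tauibound} to control $\tau_j$''.

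The one place you overcomplicate is the middle term $\tfrac{C_3\log^2 t}{2\psi(i)}$. In the paper it comes \emph{entirely} from the single vertex-step at time $\tau_j$, where the newborn $v_j$ may attach to $v_i$: on the good event for $i$,
\[
\Ed\bigl[\Delta e^{ij}_{t_j-1}\,\mathbb{1}\{\tau_j=t_j\}\bigr]\le \frac{D_{t_j-1}(i)}{2(t_j-1)}\,\Pd(\tau_j=t_j)\le \frac{\log^2(t)\,\phi(t_j-1)}{2\psi(i)(t_j-1)}\,\Pd(\tau_j=t_j)\le \frac{\log^2 t}{2\psi(i)}\,\Pd(\tau_j=t_j),
\]
and summing $\Pd(\tau_j=t_j)$ over $t_j$ gives the term directly. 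Your ``young $v_j$'' regime analysis is unnecessary and, as you compute it, would produce extra logarithmic factors: the bound $D_s(j)\le\log^2(t)\,\phi(s)/\psi(j)$ from Lemma~\ref{lemma:upperbounddeg2} is uniform in $s\ge\tau_j$, so there is no degradation near $\tau_j$ and the edge-step sum already yields $\log^4(t)\,s/(\psi(i)\psi(j))$ cleanly.
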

\begin{proof}
    Throughout this proof we will need to define a new filtration which, at time $r$, incorporates not only all the random choices made by the process until that time, but also if time $r + 1$ corresponded to a vertex or edge step:
    \begin{equation}
        \mathcal{F}_r^+ = \sigma(\mathcal{F}_r, Z_{r+1}).
    \end{equation}
    At the precise time in which vertex $j$ is added to the graph, the only way for the the number of edges between $i$ and $j$ to increase is if $j$ connects to $i$ in a vertex step. Therefore,
    \begin{equation}\label{eq:tj}
        \Ed \left[ \Delta e_{t_j-1}^{ij}\mathbb{1} \{\tau_j = t_j \} \middle | \mathcal{F}^+_{t_j-1} \right] =    \frac{D_{t_j-1}(i)}{2(t_j-1)}\mathbb{1} \{\tau_j = t_j \}.
    \end{equation}
    For $r\ge t_j$, however, an edge step must occur in order for this random variable to increase,
    \begin{equation}\label{eq:t}
        \Ed \left[ \Delta e_{r}^{ij}\mathbb{1} \{\tau_j = t_j \} \middle | \mathcal{F}^+_{r} \right] = \frac{(1-Z_{r+1})D_{r}(i)D_r(j)}{2r^2}\mathbb{1} \{\tau_j = t_j \}.
    \end{equation}
    We define the event $B^i_t$, where the degree of vertex $i$ is well behaved:
    \begin{equation}\label{eq:bti}
        B^{i}_t = \Lbrace   \exists s\in\N \text{ such that } D_s(i) \geq   \log^2(t)\frac{\phi(s)}{\phitil(i)}  \Rbrace.
    \end{equation}
    Lemma~\ref{lemma:upperbounddeg2} implies $\Pd(B^i_t) \le C'e^{-C\log^2(t)}$, for some positive constants $C$ and $C'$ depending on~$f$. 

    Using that $D_{t_j -1}(i) \le t_j$ deterministically, we obtain
    \begin{equation}
        \begin{split}
            \Ed \left[ \Delta e_{t_j-1}^{ij}\mathbb{1} \{\tau_j = t_j \} \middle | \mathcal{F}^+_{t_j-1} \right]\mathbb{1}_{B_t^i} \le    \mathbb{1}_{B_t^i} \mathbb{1}\{\tau_j = t_j \},
        \end{split}
    \end{equation}
    which together with \eqref{eq:tj}, implies
    \begin{equation}
        \Ed \left[ \Delta e_{t_j-1}^{ij}\mathbb{1} \{\tau_j = t_j \}\right] \le \frac{\log^2(t)\phi(t_j-1)}{2\phitil(i)t_j}\Pd\left(\tau_j = t_j\right) + C'e^{-C\log^2(t)};
    \end{equation}
    By the same reasoning for \eqref{eq:t} 
    \begin{equation}
        \Ed \left[ \Delta e_{r}^{ij}\mathbb{1} \{\tau_j = t_j \}  \right] \le \frac{\log^4(t)\phi(r)^2}{2\phitil(i)\phitil(j)r^2}\Pd\left(\tau_j = t_j\right) + 2C'e^{-C\log^2(t)}.
    \end{equation}
    Since $\phi(r) \leq r$, it follows that
    \begin{equation}\label{ineq:s1}
        \sum_{r=t_j-1}^s \frac{\log^4(t)\phi(r)^2}{2\phitil(i)\phitil(j)r^2} \le \frac{\log^4(t)s }{\phitil(i)\phitil(j)}.
    \end{equation}
    Moreover, combining Lemma \ref{lemma:tauibound} with $\delta = 1/2$ with the same bound on $\phi$, we have that
    \begin{equation}\label{ineq:c1}
        \begin{split}
            \sum_{t_j=j}^{F^{-1}(j)/2}\frac{\log^2(t)\phi(t_j-1)}{2\phitil(i)t_j}\Pd\left(\tau_j = t_j\right) 
                & = \frac{\log^2(t)}{2\phitil(i)}\sum_{t_j=j}^{F^{-1}(j)/2}\Pd\left(\tau_j = t_j\right)      \\
                & \le (C'_f)^{-1}\frac{\log^2(t)}{2\phitil(i)}e^{-C_f'(1-\gamma)\log^2(t)/2} 
                \\
                &\le C''e^{-C\log^2(t)},
        \end{split}
    \end{equation}
    for two constants $C''$ and $C$ depending on $f$ only.
    The same reasoning gives us similar bound for other two constants $C_2''$ and $C_2$,
    \begin{equation}\label{ineq:c2}
        \begin{split}
            \sum_{t_j=3F^{-1}(j)/2}^{t}\frac{\log^2(t)\phi(t_j-1)}{2\phitil(i)t_j}\Pd\left(\tau_j = t_j\right) & \le  C_2''e^{-C_2\log^2(t)}.
        \end{split}
    \end{equation}
    Using one more time the bound on $\phi$, we have 
    \begin{equation}\label{ineq:c3}
        \begin{split}
            \sum_{t_j=F^{-1}(j)/2}^{3F^{-1}(j)/2}\frac{\log^2(t)\phi(t_j-1)}{2\phitil(i)t_j}\Pd\left(\tau_j = t_j\right) & \le \frac{\log^2(t)}{2\phitil(i)} .
        \end{split}
    \end{equation}
    Finally, combining \eqref{ineq:s1}, \eqref{ineq:c1}, \eqref{ineq:c2} and \eqref{ineq:c3}, we obtain
    \begin{equation}
        \begin{split}
            \sum_{t_j=j}^t\Ed & \left[e_t^{ij}\mathbb{1} \{\tau_j = t_j \}\right]  \\ 
            & = \sum_{t_j=j}^s\sum_{r=t_j-1}^s\Ed\left[\Delta e_r^{ij}\mathbb{1} \{\tau_j = t_j \}\right] \\
        & \le 3s^2C'e^{-C\log^2(t)} + \sum_{t_j=j}^s\left[\frac{\log^2(t)\phi(t_j-1)}{2\phitil(i)t_j} + \frac{\log^4(t)s\xi^2(s)}{\phitil(i)\phitil(j)}\right]\Pd\left(\tau_j = t_j\right) \\
        & \le C_4's^2e^{-C_4\log^2(t)} + \frac{\log^2(t)}{2\phitil(i)} + \frac{\log^4(t)s}{\phitil(i)\phitil(j)},
        \end{split}
    \end{equation}
    for some positive constants $C_4'$ and $C_4$ depending on $f$ only, which concludes the proof.
    
\end{proof}

\subsection{Proof of Theorem \ref{thm:edgesghat}}\label{ss:proofedgeshat}

Finally we can prove Theorem \ref{thm:edgesghat} which is the last result left to be proven.
\begin{proof}[Proof of Theorem \ref{thm:edgesghat}] We begin observing that the result is clearly true when $\gamma = 0$, for this reason, we will assume $\gamma \in (0,1)$. In this case, since $f(s)/s$ is summable, by the definition of $\xi$ in \ref{def:xi}, we we know that that there exists $c_f > 0$ such that
\begin{equation}
    \label{eq:phi_linear_bound}
    c_f t \leq \phi(t) \leq t
\end{equation}
for all $t > 0$. Now, recall that $|G_t|$ can be written as follows
$$
|G_t| = 1 + \sum_{s=1}^t Z_s,
$$
where $Z_s \sim {\rm Ber}(f(s))$. Thus, by Chernoff bounds, there exist some positive constant $C$ such that
\begin{equation}
    \label{eq:num_vertices_chernoff}
    \Pd \left( |G_t| > C t^{1 - \gamma} \right)
        \leq 
            C^{-1} \exp \left\{ C t^{1 - \gamma} \right\}.
\end{equation}

Notice that $\mathcal{E}(\widehat{G}_t)$ satisfies the following upper bound
\begin{equation}\label{ineq:boundeg}
    \mathcal{E}(\widehat{G}_t) = \sum_{\substack{i,j\in G_t, \\ i<j}}\mathbb{1}\{e^{ij}_t \ge 1\} \le \log^4(t) \;  + \sum_{\substack{i,j\in G_t, \\ \log^2(t) < i<j}}\mathbb{1}\{e^{ij}_t \ge 1\},
\end{equation}
almost surely. Since at time $t$ we have added $t$ edges, the second term on the RHS of the above inequality can be bounded as follows
\begin{equation}\label{ineq:bound2}
    \sum_{\substack{i,j\in G_t, \\ \log^2(t) < i<j}}\mathbb{1}\{e^{ij}_t \ge 1\} \le \sum_{\substack{i,j\in G_t, \\ \log^2(t) < i<j}}\mathbb{1}\{e^{ij}_t \ge 1, |G_t| \le Ct^{1-\gamma}\} + t\cdot\mathbb{1}\{|G_t| \ge Ct^{1-\gamma}\}
\end{equation}
Moreover, for a fixed $i\ge \log^2(t)$, we also have
\begin{equation}\label{ineq:boundj}
    \sum_{\substack{j\in G_t, \\ j \ge i}}\mathbb{1}\{e^{ij}_t \ge 1, |G_t| \le Ct^{1-\gamma}\} \le \frac{t^{1-\gamma}}{i} + \sum_{j=t^{1-\gamma}/i}^{Ct^{1-\gamma}}e^{ij}_t.
\end{equation}
Plugging the above inequality back on \eqref{ineq:bound2} back and then on \eqref{ineq:boundeg} yields
\begin{equation*}
    \mathcal{E}(\widehat{G}_t) \le \log^4(t)+ t\cdot\mathbb{1}\{|G_t| \ge Ct^{1-\gamma}\} + \sum_{i= \log^2(t)}^{Ct^{1-\gamma}}\left[\frac{t^{1-\gamma}}{i} + \sum_{j=t^{1-\gamma}/i}^{Ct^{1-\gamma}}e^{ij}_t \right].
\end{equation*}
Thus, taking the expectation on both sides of the above inequality and using \eqref{eq:num_vertices_chernoff} gives us
\begin{equation}\label{ineq:boundexp1}
    \Ed \left[ \mathcal{E}(\widehat{G}_t) \right] \le \log^4(t) +t\cdot e^{-Ct^{1-\gamma}}+C\log(t)t^{1-\gamma} + \sum_{i= \log^2(t)}^{Ct^{1-\gamma}} \sum_{j=t^{1-\gamma}/i}^{Ct^{1-\gamma}}\Ed\left[e^{ij}_t\right].
\end{equation}
To bound the last term of the above inequality, we apply Theorem \ref{lemma:expeijt}. We first need, however, to better understand the behavior of the function $\psi = \phi \circ F^{-1}$. Theorem \ref{thm:inverse} and elementary integration implies that $F^{-1}$ is a regularly varying function with index of regularity $1/(1-\gamma)$. Given $r \in \N$, we then have
\begin{equation}
    \psi(r) = \phi(F^{-1}(r)) \geq c F^{-1}(r) \geq c r^{\frac{1}{1-\gamma} - \delta},
\end{equation}
for any $\delta > 0$ and a positive constant $c$ depending on $\delta$. This yields, together with Equation \eqref{ineq:boundexp1},
\begin{equation}\label{ineq:last}
    \begin{split}
        \sum_{i= \log^2(t)}^{Ct^{1-\gamma}} \sum_{j=t^{1-\gamma}/i}^{Ct^{1-\gamma}}\Ed\left[e^{ij}_t\right] & \le \sum_{i= \log^2(t)}^{Ct^{1-\gamma}} \sum_{j=t^{1-\gamma}/i}^{Ct^{1-\gamma}}C_1t^2e^{-C_2\log^2(t)} + \frac{C_3 t^\delta}{i^{\frac{1}{1-\gamma}}}+\frac{C_4 t^{1 + \delta}}{(ij)^{\frac{1}{1-\gamma}}}\\
        & \le C_1t^3e^{-C_2\log^2(t)} + C_3t^{1-\gamma +\delta}+\sum_{i= \log^2(t)}^{Ct^{1-\gamma}}\frac{C_4 t^{1 + \delta}}{i^{\frac{1}{1-\gamma}}}\cdot \frac{i^{\frac{\gamma}{1-\gamma}}}{t^{\gamma}}\\
        & \le C_1t^3e^{-C_2\log^2(t)} + C_3t^{1-\gamma + \delta}+ C_4 t^{1-\gamma + \delta} \\
        & \le Ct^{1-\gamma + \delta}.
    \end{split}
\end{equation}
By replacing the above bound on \eqref{ineq:boundexp1}, we obtain the desired result.
\end{proof}

\appendix
\section{Important results for Regularly varying functions}
In this appendix we gather important results from the Karamata's theory. All of them may be found in \cite{regvarbook}, but for the reader's convenience we state them here under our notation.

\begin{proposition}[Proposition 1.3.6 of \cite{regvarbook}]\label{prop:svsmall}Let $\ell$ be a slowly varying function. Then, for any $a>0$ it follows that
    $$
    x^a\ell(x) \rightarrow \infty, \quad x^{-a}\ell(x) \rightarrow 0, \text{ as }x \to \infty.
    $$
\end{proposition}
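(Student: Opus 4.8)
The plan is to read the statement off from the Karamata representation of slowly varying functions. Recall (Theorem~\ref{thm:repthm}; see also \cite{regvarbook}) that a slowly varying $\ell$ can be written, for all sufficiently large $x$, as
\[
\ell(x) = c(x)\exp\left(\int_{b}^{x}\frac{\e(u)}{u}\,{\rm d}u\right),
\]
where $c(x)\to c_0\in(0,\infty)$ and $\e(u)\to 0$ as $u\to\infty$. Fixing $a>0$ and multiplying by the appropriate power of $x$, this gives
\[
x^{-a}\ell(x) = c(x)\exp\left(\int_{b}^{x}\frac{\e(u)-a}{u}\,{\rm d}u\right),\qquad
x^{a}\ell(x) = c(x)\exp\left(\int_{b}^{x}\frac{\e(u)+a}{u}\,{\rm d}u\right).
\]

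First I would choose $T\ge b$ large enough that $|\e(u)|\le a/2$ for all $u\ge T$. Splitting each integral at $T$, the piece over $[b,T]$ contributes a fixed finite constant, while the piece over $[T,x]$ is, in the first display, at most $-\tfrac a2\log(x/T)\to-\infty$, and in the second display at least $\tfrac a2\log(x/T)\to+\infty$. Since $c(x)$ converges to the finite strictly positive limit $c_0$, it is bounded above and, for $x$ large, bounded away from zero. Combining these facts yields $x^{-a}\ell(x)\to 0$ and $x^{a}\ell(x)\to\infty$, which is the claim.

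The substance of the statement is entirely in the structural input — the representation theorem, or equivalently Karamata's Uniform Convergence Theorem — since slow variation directly supplies only control of $\ell(\lambda x)/\ell(x)$ for each \emph{fixed} $\lambda$; the passage to the estimate above is then routine. If one prefers to avoid quoting the representation, the same conclusion follows by a telescoping argument: using the Uniform Convergence Theorem pick $X_0$ with $2^{-a/2}\le \ell(2x)/\ell(x)\le 2^{a/2}$ for $x\ge X_0$, write a large $x$ as $x=2^n y$ with $y\in[X_0,2X_0)$ and $n=\lfloor\log_2(x/X_0)\rfloor$, telescope $\ell(x)/\ell(y)$ to obtain $c\,x^{-a/2}\le \ell(x)\le C\,x^{a/2}$ (using local boundedness of $\ell$ on the compact block $[X_0,2X_0]$), and divide respectively multiply by $x^a$. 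As this is a textbook fact we will simply cite it, but either route makes the mechanism explicit.
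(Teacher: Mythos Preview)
The paper does not provide its own proof of this proposition: it is listed in the appendix as a standard fact quoted from \cite{regvarbook}, with no argument given. Your proof via the Karamata integral representation is correct and is in fact the standard textbook derivation, and the alternative telescoping route you sketch is also fine.

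One small quibble: your citation of Theorem~\ref{thm:repthm} for the representation $\ell(x)=c(x)\exp\big(\int_b^x \e(u)/u\,{\rm d}u\big)$ is off --- in this paper Theorem~\ref{thm:repthm} only records the factorisation $f(x)=\ell(x)x^\beta$ for regularly varying $f$, not the integral representation of a slowly varying function. The representation you use must be pulled directly from \cite{regvarbook} (Theorem~1.3.1 there), so the parenthetical reference should point only to that.
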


\begin{theorem}[Theorem 1.4.1 of \cite{regvarbook}]\label{thm:repthm}Let $f$ be a strictly positive regularly varying function with index $\beta$. Then, there exists a slowly varying function $\ell$ such that
$$
f(x) = \ell(x)x^{\beta}.
$$
\end{theorem}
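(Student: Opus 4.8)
The plan is to prove the statement by an explicit construction rather than by appealing to the full machinery of Karamata theory. Since $f$ is strictly positive on $(0,\infty)$, I would simply define
\[
\ell(x) := f(x)\,x^{-\beta},
\]
so that $\ell$ is strictly positive and the factorization $f(x)=\ell(x)x^{\beta}$ holds identically on $(0,\infty)$ by construction. The entire content then reduces to checking that this $\ell$ is itself slowly varying, i.e.\ that it satisfies \eqref{def:rv} with index $0$.

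To verify this, fix an arbitrary $a>0$ and compute directly from the definitions,
\[
\frac{\ell(at)}{\ell(t)}=\frac{f(at)\,(at)^{-\beta}}{f(t)\,t^{-\beta}}=a^{-\beta}\,\frac{f(at)}{f(t)}.
\]
Because $f$ is regularly varying with index $\beta$, the ratio $f(at)/f(t)$ converges to $a^{\beta}$ as $t\to\infty$, so the right-hand side converges to $a^{-\beta}a^{\beta}=1$. Since $a>0$ was arbitrary, $\ell$ satisfies \eqref{def:rv} with $\gamma=0$ and is therefore slowly varying, which establishes the statement exactly as phrased.

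There is essentially no obstacle for the version stated here: it is a rearrangement of the definition together with a one-line limit. The genuine difficulty in Karamata's Representation Theorem appears only in the sharper form, namely that a slowly varying $\ell$ can be written as $\ell(x)=c(x)\exp\{\int_a^x \varepsilon(u)u^{-1}\,{\rm d}u\}$ with $c(x)\to c\in(0,\infty)$ and $\varepsilon(u)\to 0$; proving that requires the Uniform Convergence Theorem for regularly varying functions (uniformity of $\ell(\lambda x)/\ell(x)\to 1$ for $\lambda$ in compact subsets of $(0,\infty)$), which rests on measurability of $\ell$ and a Steinhaus/Baire-category argument. However, the applications in this paper use no more than the factorization $f(x)=\ell(x)x^{\beta}$ together with growth estimates such as Proposition~\ref{prop:svsmall}, so the elementary construction above is all that is needed; the stronger integral representation can be quoted from \cite[Theorem~1.4.1]{regvarbook} if it is ever required.
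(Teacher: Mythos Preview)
Your proof is correct. The paper does not actually prove this statement: it is listed in the appendix as a known result quoted from \cite{regvarbook} without any argument, so there is no ``paper's own proof'' to compare against. Your construction $\ell(x):=f(x)x^{-\beta}$ together with the one-line limit computation is exactly the standard elementary verification, and your remark distinguishing this factorization from the stronger integral representation is accurate and appropriate.
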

\begin{proposition}[Proposition 1.5.8 of \cite{regvarbook}]\label{prop:factorout}Let $\ell$ be a locally bounded slowly varying function. If $\alpha>-1$, then
    $$
    \frac{\int_{x_0}^{x}t^\alpha\ell(t)\mathrm{d}t}{x^{1+\alpha}\ell(x)} \longrightarrow \frac{1}{1+\alpha}\quad \text{as }x\to \infty.
    $$
    
\end{proposition}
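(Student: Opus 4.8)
The plan is to rescale the variable of integration so that the statement reduces to a dominated-convergence argument for the ratio $\ell(xs)/\ell(x)$. Since $1+\alpha>0$, Proposition~\ref{prop:svsmall} gives $x^{1+\alpha}\ell(x)\to\infty$, so replacing $x_0$ by any larger constant changes the numerator by an $O(1)$ term that is negligible in the limit; I may therefore enlarge $x_0$ whenever convenient. After the change of variables $t=xs$ one obtains
\begin{equation*}
\frac{\int_{x_0}^{x}t^\alpha\ell(t)\,\mathrm dt}{x^{1+\alpha}\ell(x)}
=\int_{x_0/x}^{1}s^\alpha\,\frac{\ell(xs)}{\ell(x)}\,\mathrm ds ,
\end{equation*}
and since $x_0/x\to0$ it suffices to show the right-hand side tends to $\int_0^1 s^\alpha\,\mathrm ds=\tfrac{1}{1+\alpha}$, the integral being finite precisely because $\alpha>-1$.

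Next I would fix $\delta\in(0,1)$ and split the integral at $s=\delta$. On $[\delta,1]$ the uniform convergence theorem for slowly varying functions (see \cite{regvarbook}) gives $\ell(xs)/\ell(x)\to1$ uniformly in $s$, hence $\int_\delta^1 s^\alpha\,\ell(xs)/\ell(x)\,\mathrm ds\to\int_\delta^1 s^\alpha\,\mathrm ds$. For the remaining piece on $(x_0/x,\delta)$ I would invoke Potter's bounds: fixing $\e>0$ with $\alpha-\e>-1$, there is $X$ such that $\ell(xs)/\ell(x)\le 2s^{-\e}$ whenever $x\ge X$ and $xs\ge X$ (after enlarging $x_0$ the low-end contribution below $X/x$ is absorbed into the $O(1)$ term via local boundedness of $\ell$), so this piece is at most $2\int_0^\delta s^{\alpha-\e}\,\mathrm ds=\tfrac{2}{1+\alpha-\e}\,\delta^{1+\alpha-\e}$, uniformly for large $x$.

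Combining the two pieces yields, for every $\delta\in(0,1)$,
\begin{equation*}
\frac{1-\delta^{1+\alpha}}{1+\alpha}\ \le\ \liminf_{x\to\infty}\int_{x_0/x}^{1}s^\alpha\frac{\ell(xs)}{\ell(x)}\,\mathrm ds\ \le\ \limsup_{x\to\infty}\int_{x_0/x}^{1}s^\alpha\frac{\ell(xs)}{\ell(x)}\,\mathrm ds\ \le\ \frac{1-\delta^{1+\alpha}}{1+\alpha}+\frac{2\,\delta^{1+\alpha-\e}}{1+\alpha-\e},
\end{equation*}
and letting $\delta\downarrow0$ forces both the $\liminf$ and the $\limsup$ to equal $\tfrac{1}{1+\alpha}$. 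I expect the main obstacle to be exactly the behaviour near $s=0$: the convergence $\ell(xs)/\ell(x)\to1$ fails to be uniform down to $s=0$, so one genuinely needs a Potter-type domination, and the hypothesis $\alpha>-1$ is used twice — once to make the target integral $\int_0^1 s^\alpha\,\mathrm ds$ finite, and once (with room to spare) to choose $\e$ small enough that $s^{\alpha-\e}$ remains integrable near the origin.
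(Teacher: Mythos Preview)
Your argument is correct and is essentially the standard proof of Karamata's theorem: rescale $t=xs$, use the uniform convergence theorem on $[\delta,1]$, and control the tail near $s=0$ via Potter's bounds with an exponent $\e<1+\alpha$. The handling of the region $xs<X$ by enlarging $x_0$ is fine because local boundedness makes the discarded piece $O(1)$ while the denominator diverges.

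Note, however, that the paper does \emph{not} supply its own proof of this proposition: it is listed in the appendix purely as a quotation of Proposition~1.5.8 of \cite{regvarbook}, along with the other Karamata-theory facts used in the body of the article. So there is no ``paper's proof'' to compare against; what you have written is the classical argument one finds in that reference.
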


\begin{proposition}[Proposition 1.5.9a of \cite{regvarbook}]\label{prop:sumsv}Let $\ell$ be a locally bounded slowly varying function. Then, the function $\widehat{\ell}$ defined as
    $$
    \widehat{\ell}(x):= \int_X^{x}\frac{\ell(t)\mathrm{d}t}{t} 
    $$
    is a slowly varying function. Moreover,
    $$
    \frac{\widehat{\ell}(x)}{\ell(x)} \longrightarrow \infty \quad \text{as }x\to \infty.
    $$
    
\end{proposition}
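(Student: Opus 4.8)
The plan is to reduce both assertions to a single analytic estimate --- a two-sided bound on the \emph{logarithmic increment} $\int_x^{\lambda x}\ell(t)t^{-1}\,\mathrm{d}t$ --- and then read off the slow variation of $\widehat{\ell}$ and the divergence $\widehat{\ell}/\ell\to\infty$ from it by short deductions. First I would fix the lower limit $X$ in the definition of $\widehat{\ell}$ large enough that $\ell>0$ on $[X,\infty)$; local boundedness of $\ell$ then makes $\widehat{\ell}(x)=\int_X^x\ell(t)t^{-1}\,\mathrm{d}t$ finite for each $x$, and $\widehat{\ell}$ is automatically non-negative and non-decreasing on $(X,\infty)$.

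The hard part will be the increment estimate: for every fixed $\lambda>1$,
\[
\log\lambda\;\le\;\liminf_{x\to\infty}\frac{1}{\ell(x)}\int_x^{\lambda x}\frac{\ell(t)}{t}\,\mathrm{d}t\;\le\;\limsup_{x\to\infty}\frac{1}{\ell(x)}\int_x^{\lambda x}\frac{\ell(t)}{t}\,\mathrm{d}t\;\le\;\inf_{0<\alpha<1}\frac{\lambda^{\alpha}}{\alpha}\;<\;\infty .
\]
The obstacle is that the exponent $-1$ is precisely the one excluded from Karamata's estimate in Proposition~\ref{prop:factorout}, so I would approach it from above: for $\alpha\in(0,1)$ and $t\in[x,\lambda x]$ one has $t^{\alpha-1}(\lambda x)^{-\alpha}\le t^{-1}\le t^{\alpha-1}x^{-\alpha}$. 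Inserting these bounds, writing $\int_x^{\lambda x}=\int_X^{\lambda x}-\int_X^x$, and applying Proposition~\ref{prop:factorout} with index $\alpha-1>-1$ (together with $\ell(\lambda x)/\ell(x)\to1$ for fixed $\lambda$) gives, for each fixed $\alpha\in(0,1)$, that the $\limsup$ above is $\le\lambda^{\alpha}/\alpha$ and the $\liminf$ is $\ge(1-\lambda^{-\alpha})/\alpha$; since the left-hand sides do not depend on $\alpha$, letting $\alpha\downarrow0$ in the lower estimate produces $\log\lambda$. The only delicate point is the order of limits --- fix $\lambda$, send $x\to\infty$, and only afterwards let $\alpha\downarrow0$.

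Granting this, the two conclusions follow quickly. For the ``Moreover'' part I would fix $\lambda>1$ and use that $\widehat{\ell}$ is non-decreasing and non-negative to write $\widehat{\ell}(x)\ge\widehat{\ell}(x)-\widehat{\ell}(x/\lambda)=\int_{x/\lambda}^{x}\ell(t)t^{-1}\,\mathrm{d}t$; applying the increment estimate at scale $x/\lambda$ and using $\ell(x/\lambda)/\ell(x)\to1$ yields $\liminf_{x\to\infty}\widehat{\ell}(x)/\ell(x)\ge\log\lambda$, and since $\lambda$ is arbitrary this forces $\widehat{\ell}(x)/\ell(x)\to\infty$, i.e.\ $\ell(x)/\widehat{\ell}(x)\to0$. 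For slow variation of $\widehat{\ell}$, for $\lambda>1$ I would write
\[
\frac{\widehat{\ell}(\lambda x)}{\widehat{\ell}(x)}=1+\frac{\ell(x)}{\widehat{\ell}(x)}\cdot\frac{1}{\ell(x)}\int_x^{\lambda x}\frac{\ell(t)}{t}\,\mathrm{d}t ,
\]
where the first factor tends to $0$ by what was just shown while the second stays bounded by the increment estimate, so $\widehat{\ell}(\lambda x)/\widehat{\ell}(x)\to1$; the case $0<\lambda<1$ follows by applying this with $x'=\lambda x$ and $\lambda'=1/\lambda$, and $\lambda=1$ is trivial, so $\widehat{\ell}$ satisfies~\eqref{def:rv} with index $0$. (An alternative route to the increment estimate goes through the local-uniform convergence $\ell(ux)/\ell(x)\to1$ for $u$ in compact subsets of $(0,\infty)$, but the argument above has the advantage of using only Proposition~\ref{prop:factorout}, already stated in this appendix.)
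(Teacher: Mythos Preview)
The paper does not contain a proof of this proposition: it appears in the appendix as one of several results quoted verbatim from \cite{regvarbook} ``for the reader's convenience,'' with no argument given. So there is nothing in the paper to compare your proposal against.

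That said, your argument is sound. The key device --- sandwiching $t^{-1}$ between $t^{\alpha-1}x^{-\alpha}$ and $t^{\alpha-1}(\lambda x)^{-\alpha}$ on $[x,\lambda x]$ and then invoking Proposition~\ref{prop:factorout} at index $\alpha-1>-1$ --- correctly circumvents the fact that the borderline exponent $-1$ is excluded from Karamata's theorem, and your order of limits (first $x\to\infty$ with $\alpha$ fixed, then $\alpha\downarrow 0$) is handled carefully. The deductions of $\widehat\ell/\ell\to\infty$ from the lower increment bound and of slow variation from the upper increment bound combined with $\ell/\widehat\ell\to 0$ are clean. One minor remark: you write the upper bound for the $\limsup$ as $\lambda^{\alpha}/\alpha$, whereas the computation actually gives the sharper $(\lambda^{\alpha}-1)/\alpha$; either is sufficient since all you need is finiteness for fixed $\lambda$. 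Your alternative route via uniform convergence of $\ell(ux)/\ell(x)\to1$ on compacta is in fact the standard textbook argument and would be shorter, but your approach has the merit of relying only on the tools already assembled in the appendix.
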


\begin{proposition}[Proposition 1.5.10 of \cite{regvarbook}]\label{prop:sumfinite}Let $\ell$ be a slowly varying function. If $\alpha<-1$, then
    $$
    \frac{\int_x^{\infty}t^\alpha\ell(t)\mathrm{d}t}{x^{1+\alpha}\ell(x)} \longrightarrow \frac{1}{-1-\alpha}\quad \text{as }x\to \infty.
    $$
    
\end{proposition}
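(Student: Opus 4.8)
The plan is to prove this by the change of variables $t=xs$, which turns the target ratio into an integral to which dominated convergence applies. Write $f(t)=t^{\alpha}\ell(t)$, so that $f$ is regularly varying of index $\alpha$ and $x^{1+\alpha}\ell(x)=x\,f(x)$. The very first thing to check is that the integral makes sense: since $\alpha<-1$, choose $\varepsilon>0$ with $\alpha+\varepsilon<-1$; by Proposition~\ref{prop:svsmall} the function $t^{-\varepsilon}\ell(t)$ tends to $0$, hence is bounded by some constant for large $t$, so $t^{\alpha}\ell(t)\le C t^{\alpha+\varepsilon}$ near infinity and $\int_x^{\infty} t^{\alpha}\ell(t)\,\mathrm{d}t$ converges.

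Next, substituting $t=xs$,
\begin{equation*}
\frac{\int_x^{\infty}t^{\alpha}\ell(t)\,\mathrm{d}t}{x^{1+\alpha}\ell(x)}
=\frac{1}{x\,f(x)}\int_x^{\infty}f(t)\,\mathrm{d}t
=\int_1^{\infty}\frac{f(xs)}{f(x)}\,\mathrm{d}s
=\int_1^{\infty}s^{\alpha}\,\frac{\ell(xs)}{\ell(x)}\,\mathrm{d}s .
\end{equation*}
For each fixed $s\ge 1$, the integrand converges to $s^{\alpha}$ as $x\to\infty$ by the definition of slow variation, and $\int_1^{\infty}s^{\alpha}\,\mathrm{d}s=\frac{1}{-1-\alpha}$ because $\alpha+1<0$. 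Thus the theorem reduces to interchanging the limit and the integral.

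To justify the interchange I would invoke a Potter-type domination obtained from the representation theorem (Theorem~\ref{thm:repthm}) in its standard form $\ell(t)=c(t)\exp\{\int_a^{t}\varepsilon(u)u^{-1}\,\mathrm{d}u\}$ with $c(t)\to c>0$ and $\varepsilon(u)\to 0$. Fixing $\delta>0$, for $x$ large enough one has $c(xs)/c(x)\le\sqrt{2}$ and $|\varepsilon(u)|\le\delta/2$ for all $u\ge x$, whence $|\int_x^{xs}\varepsilon(u)u^{-1}\,\mathrm{d}u|\le(\delta/2)\log s$ for $s\ge 1$, giving $\ell(xs)/\ell(x)\le 2\,s^{\delta}$ uniformly in $s\ge 1$. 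Choosing $\delta$ with $\alpha+\delta<-1$ produces the integrable dominating function $s\mapsto 2\,s^{\alpha+\delta}$ on $[1,\infty)$, and dominated convergence yields $\int_1^{\infty}s^{\alpha}\ell(xs)/\ell(x)\,\mathrm{d}s\to\int_1^{\infty}s^{\alpha}\,\mathrm{d}s=\frac{1}{-1-\alpha}$.

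The only genuinely non-routine ingredient is this uniform (Potter) bound needed for dominated convergence; everything else is bookkeeping. If one wishes to avoid quoting the refined form of the representation theorem, an alternative is a dyadic blocking argument, splitting $\int_x^{\infty}=\sum_{k\ge 0}\int_{2^{k}x}^{2^{k+1}x}$ and estimating each block, but this still relies on uniform-convergence estimates and is less transparent, so I would present the representation-theorem route as the main line of proof.
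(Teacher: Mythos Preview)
The paper does not actually prove this proposition: it is listed in the appendix as a quotation of Proposition~1.5.10 from \cite{regvarbook}, with no argument given. So there is nothing in the paper to compare your proof against.

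Your argument itself is the standard one and is correct: the substitution $t=xs$ reduces the claim to dominated convergence for $\int_1^\infty s^{\alpha}\,\ell(xs)/\ell(x)\,\mathrm{d}s$, and a Potter-type bound supplies the dominating function. One small point of care: the Theorem~\ref{thm:repthm} you cite, as stated in this paper, only asserts the factorisation $f(x)=\ell(x)x^{\beta}$, not the Karamata integral representation $\ell(t)=c(t)\exp\{\int_a^t \varepsilon(u)u^{-1}\,\mathrm{d}u\}$ that you actually use to derive the uniform bound $\ell(xs)/\ell(x)\le 2s^{\delta}$. If you want the write-up to be self-contained relative to the paper's appendix, you should either state the Karamata representation separately or cite Potter's theorem directly from \cite{regvarbook}; mathematically the step is fine, only the reference is mismatched.
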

If $f$ a locally bounded function defined at $[x_0, \infty)$ and tends to $\infty$ as $x$ tends to $\infty$, then its generalized inverse 
$$
f^{\leftarrow}(x):= \inf\{y \in [x_0, \infty) : f(y)>x \}
$$
is defined on $[f(x_0), \infty)$ and it increases monotonically to infinity. The theorem below gives additional information on the generalized inverse when $f$ is a regularly varying function.
\begin{theorem}[Theorem 1.5.12 of \cite{regvarbook}]\label{thm:inverse}Let $f$ be a regularly varying function with index of regular varying $\beta > 0$. Then $f^{\leftarrow}$ is a regularly varying function with index $1/\beta$.
\end{theorem}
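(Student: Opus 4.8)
The plan is to reduce the statement to the case of a continuous, strictly increasing regularly varying function --- where the generalized inverse is the ordinary inverse --- prove it there by a direct computation, and then transfer the conclusion back to $f^{\leftarrow}$ using that $f^{\leftarrow}$ depends, in the asymptotic sense, continuously on $f$. \emph{Step 1 (a smooth increasing version of $f$).} By the Representation Theorem~\ref{thm:repthm} write $f(x)=x^{\beta}\ell(x)$ with $\ell$ a positive slowly varying function (hence locally bounded), and set $\widetilde f(x):=\beta\int_{x_0}^{x}t^{\beta-1}\ell(t)\,\mathrm{d}t$. Since $\beta-1>-1$, Proposition~\ref{prop:factorout} yields $\widetilde f(x)\sim x^{\beta}\ell(x)=f(x)$ as $x\to\infty$; consequently $\widetilde f$ is regularly varying with index $\beta$, because $\widetilde f(ax)/\widetilde f(x)=\frac{\widetilde f(ax)}{f(ax)}\cdot\frac{f(ax)}{f(x)}\cdot\frac{f(x)}{\widetilde f(x)}\to a^{\beta}$ for each $a>0$. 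Moreover $\widetilde f$ is absolutely continuous and strictly increasing on $[x_0,\infty)$ (its derivative is $\beta x^{\beta-1}\ell(x)>0$), with $\widetilde f(x_0)=0$ and $\widetilde f(x)\to\infty$; hence it admits a continuous, strictly increasing inverse $\widetilde f^{-1}$ on $(0,\infty)$, and $\widetilde f^{-1}=\widetilde f^{\leftarrow}$.

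\emph{Step 2 ($\widetilde f^{-1}$ is regularly varying with index $1/\beta$).} Fix $\lambda>0$ and set $c:=\lambda^{1/\beta}$. For each $\e\in(0,1)$, regular variation of $\widetilde f$ gives $\widetilde f\big((1\pm\e)cy\big)/\widetilde f(y)\to\big((1\pm\e)c\big)^{\beta}=(1\pm\e)^{\beta}\lambda$ as $y\to\infty$; since $(1-\e)^{\beta}\lambda<\lambda<(1+\e)^{\beta}\lambda$, for all large $y$ we have $\widetilde f\big((1-\e)cy\big)<\lambda\widetilde f(y)<\widetilde f\big((1+\e)cy\big)$. Applying the increasing map $\widetilde f^{-1}$ and substituting $x=\widetilde f(y)$ (so $y=\widetilde f^{-1}(x)$ and $x\to\infty$), we obtain $(1-\e)c\,\widetilde f^{-1}(x)<\widetilde f^{-1}(\lambda x)<(1+\e)c\,\widetilde f^{-1}(x)$ for all large $x$. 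Letting $\e\downarrow 0$ gives $\widetilde f^{-1}(\lambda x)/\widetilde f^{-1}(x)\to\lambda^{1/\beta}$, i.e. $\widetilde f^{-1}$ is regularly varying with index $1/\beta$.

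\emph{Step 3 (transfer to $f^{\leftarrow}$).} Fix $\e\in(0,1)$ and choose $Y$ with $(1-\e)\widetilde f(y)\le f(y)\le (1+\e)\widetilde f(y)$ for all $y\ge Y$. Because $f^{\leftarrow}(x)$ and $\widetilde f^{-1}(x)$ both tend to infinity, for all large $x$ the infima defining them are attained in $[Y,\infty)$, and the inclusions $\{y\ge Y:\widetilde f(y)>x/(1-\e)\}\subseteq\{y\ge Y:f(y)>x\}\subseteq\{y\ge Y:\widetilde f(y)>x/(1+\e)\}$ give $\widetilde f^{-1}\big(x/(1+\e)\big)\le f^{\leftarrow}(x)\le\widetilde f^{-1}\big(x/(1-\e)\big)$. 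Dividing by $\widetilde f^{-1}(x)$ and using Step~2, $(1+\e)^{-1/\beta}\le\liminf_{x\to\infty}\frac{f^{\leftarrow}(x)}{\widetilde f^{-1}(x)}\le\limsup_{x\to\infty}\frac{f^{\leftarrow}(x)}{\widetilde f^{-1}(x)}\le(1-\e)^{-1/\beta}$; letting $\e\downarrow 0$ shows $f^{\leftarrow}(x)\sim\widetilde f^{-1}(x)$. Hence, for every $a>0$,
\[
\frac{f^{\leftarrow}(ax)}{f^{\leftarrow}(x)}=\frac{f^{\leftarrow}(ax)}{\widetilde f^{-1}(ax)}\cdot\frac{\widetilde f^{-1}(ax)}{\widetilde f^{-1}(x)}\cdot\frac{\widetilde f^{-1}(x)}{f^{\leftarrow}(x)}\longrightarrow 1\cdot a^{1/\beta}\cdot 1=a^{1/\beta},
\]
so $f^{\leftarrow}$ is regularly varying with index $1/\beta$.

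I expect the only genuinely delicate point to be Step~3: one must upgrade the asymptotic equivalence $f\sim\widetilde f$ to the asymptotic equivalence $f^{\leftarrow}\sim\widetilde f^{-1}$ of the (generalized) inverses, which is where the monotonicity and the regular variation of $\widetilde f^{-1}$ obtained in Step~2 are essential, and where one has to check carefully that for large $x$ the defining infima lie beyond the threshold $Y$. Steps~1 and~2 are routine Karamata calculus built on the results already recorded in the appendix (Theorem~\ref{thm:repthm} and Proposition~\ref{prop:factorout}).
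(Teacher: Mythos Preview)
Your proof is correct, but there is nothing in the paper to compare it to: the statement appears in the appendix as a citation of Theorem~1.5.12 from~\cite{regvarbook}, and the paper simply records it without proof (``All of them may be found in~\cite{regvarbook}, but for the reader's convenience we state them here under our notation''). So there is no ``paper's own proof'' of this theorem.

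For what it's worth, your argument is the standard one and is essentially how the result is proved in~\cite{regvarbook}: replace $f$ by an asymptotically equivalent continuous strictly increasing version via Karamata's theorem, show the inverse of that version is regularly varying of index $1/\beta$ by a sandwich argument, and then transfer back. The only small quibble is in Step~1: a measurable slowly varying function need not be locally bounded everywhere, only on $[X,\infty)$ for some large $X$; this is harmless since you may take $x_0$ large, but the parenthetical ``hence locally bounded'' is a slight overstatement. Your identification in Step~3 of the point that actually requires care (that $f\sim\widetilde f$ implies $f^{\leftarrow}\sim\widetilde f^{-1}$, using monotonicity of $\widetilde f$ and that both generalized inverses eventually exceed the threshold $Y$) is exactly right.
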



\bibliography{ref}
\bibliographystyle{plain}

\end{document}